\documentclass[12pt]{article}

\usepackage{amsmath}
\usepackage{amssymb}
\usepackage{amsthm}
\usepackage{amsfonts}
\usepackage{latexsym}
\usepackage{cite}
\usepackage{psfrag}
\usepackage{mathrsfs}
\usepackage{url}
\usepackage{color}

\usepackage[colorlinks=true]{hyperref}
\hypersetup{urlcolor=blue, citecolor=red}

\textwidth160mm \textheight205mm \oddsidemargin0mm

\makeatletter
\@addtoreset{equation}{section}
\makeatother

\newtheorem{theorem}{Theorem}[section]
\newtheorem{lemma}[theorem]{Lemma}

\newtheorem{proposition}[theorem]{Proposition}
\theoremstyle{definition}
\newtheorem{definition}[theorem]{Definition}
\newtheorem{notation}[theorem]{Notation}
\newtheorem{remark}[theorem]{Remark}
\newtheorem{example}[theorem]{Example}

\def\L{\mathcal{L}}
\def\O{\mathcal{O}}

\def\TT{\mathcal{T}}

\def\C{\mathscr{C}}

\def\M{\mathscr{M}}
\def\N{\mathscr{N}}

\def\Os{\mathscr{O}}

\def\PG{\mathrm{PG}}
\def\RC{\mathrm{RC}}
\def\Tr{\mathrm{T}}
\def\IC{\mathrm{IC}}
\def\EnG{\mathrm{En}\Gamma}
\def\TO{\mathrm{TO}}

\def\J{\mathbf{J}}
\def\MM{\mathbf{M}}
\def\Pf{\mathbf{P}}

\def\F{\mathbb{F}}
\newcommand{\Pb}{\mathbb{P}}
\newcommand{\Lb}{\mathbb{L}}

\def\Nb{\mathbb{N}}
\def\V{\mathbb{V}}

\newcommand{\A}{\mathfrak{A}}

\newcommand{\Mk}{\mathfrak{M}}
\newcommand{\Nk}{\mathfrak{N}}
\newcommand{\Pk}{\mathfrak{P}}
\newcommand{\Tk}{\mathfrak{T}}
\newcommand{\pk}{\mathfrak{p}}
\newcommand{\nk}{\mathfrak{n}}

\def\T{\text}
\def\db{\displaybreak[3]}
\def\dbn{\displaybreak[3]\notag}
\def\nt{\notag}

\begin{document}
\title
{Incidence matrices for the class $\mathcal{O}_6$ of lines external to the twisted cubic in $\mathrm{PG}(3,q)$
\date{}
\thanks{The research of S. Marcugini and F. Pambianco was supported in part by the Italian
National Group for Algebraic and Geometric Structures and their Applications (GNSAGA -
INDAM) (Contract No. U-UFMBAZ-2019-000160, 11.02.2019) and by University of Perugia
(Project No. 98751: Strutture Geometriche, Combinatoria e loro Applicazioni, Base Research
Fund 2017-2019).}
}
\maketitle
\begin{center}
{\sc Alexander A. Davydov}\\
 \emph{E-mail address:} alexander.davydov121@gmail.com\medskip\\
 {\sc Stefano Marcugini and
 Fernanda Pambianco }\\
 {\sc\small Department of  Mathematics  and Computer Science,  Perugia University,}\\
 {\sc\small Perugia, 06123, Italy}\\
 \emph{E-mail address:} \{stefano.marcugini, fernanda.pambianco\}@unipg.it
\end{center}

\textbf{Abstract.}We consider the structures of the plane-line and point-line incidence matrices of the projective space $\mathrm{PG}(3,q)$ connected with orbits of planes, points, and lines under the stabilizer group of the twisted cubic. In the literature, lines are partitioned into classes, each of which is a union of line orbits. In this paper, for all $q$, even and odd, we determine  the incidence matrices connected with a family of  orbits of the class named $\mathcal{O}_6$. This class contains lines external to the twisted cubic. The considered family include an essential part of all $\mathcal{O}_6$ orbits, whose complete classification is an open problem.

\textbf{Keywords:} Twisted cubic, Projective space, Incidence matrix, Line class $\mathcal{O}_6$

\textbf{Mathematics Subject Classification (2010).} 05B25, 05E18, 51E20, 51E21, 51E22

\section{Introduction}
Let $\F_{q}$ be the Galois field with $q$ elements, $\F_{q}^*=\F_{q}\setminus\{0\}$, $\F_q^+=\F_q\cup\{\infty\}$.

In the $N$-dimensional projective space $\PG(N,q)$ over $\F_q$, an $n$-arc  is a
set of $n$ points such that no $N +1$ points belong to the same hyperplane.

In $\PG(N,q)$ a normal rational curve is any $(q+1)$-arc projectively equivalent to the point set
$\{(t^N,t^{N-1},\ldots,t^2,t,1)|t\in \F_q^+\}$,  named  \emph{twisted cubic} for $N=3$; see \cite{Hirs_PG3q}.

The twisted cubic has many interesting properties and is connected with distinct combinatorial and applied problems, which led this curve to be widely studied, see for instance \cite{BonPolvTwCub,BrHirsTwCub,CapKorchSon,CasseGlynn82,CasseGlynn84,CosHirsStTwCub,%
GiulVincTwCub,Hirs_PG3q,KorchLanzSon,ZanZuan2010} and the references therein.

Of particular interest is to consider the action of the stabilizer group $G_q\cong \mathrm{PGL}(2,q)$ of the twisted cubic on points, lines, planes of PG(3,q) and the determination of their related incidence matrices. The investigations, based on the known classification of the point and plane orbits of $G_q$ given in \cite{Hirs_PG3q}, were started by D. Bartoli and the present authors in 2020 \cite{BDMP-TwCub}: the point-plane incidence matrix is determined and applied in coding theory  obtaining optimal multiple covering codes.  The results in \cite{BDMP-TwCub} are also useful to classify the cosets of the $[ q+1, q-3,5]_q 3$ generalized doubly-extended Reed-Solomon code of codimension $4$ through their weight distributions \cite{DMP_CosetsRScod4}.

For the study of plane-line and point-line incidence matrices a description of line orbits under $G_q$ is useful.
In \cite{Hirs_PG3q}, a partition of the lines in PG(3,q) into classes is given, each of which is a union of line orbits under $G_q$.
In \cite{DMP_OrbLineArX, DMP_OrbLineArXII, DMP_OrbLineMedit}  we have determined both the number and the structure of the orbits forming those unions, apart from one class denoted by $\mathcal{O}_6$, containing  lines external to  the twisted cubic that are not its chords or axes and do not lie in its osculating planes. Basing on the results of \cite{DMP_OrbLineArX, DMP_OrbLineArXII, DMP_OrbLineMedit}, in \cite{DMP_PlLineIncarX,DMP_PointLineInc,DMP_PlLineIncJG} the
plane-line and point-line incidence matrices connected with orbits of all line classes apart from $\mathcal{O}_6$ were obtained. For $\mathcal{O}_6$ some average and cumulative values are calculated.

The above results have attracted attention, and motivated some investigation using different techniques in \cite{BlokPelSzoDesi,GulLavFFA} and in the recent paper \cite{CePa}. In particular, in \cite{BlokPelSzoDesi} (for all $q \ge23$) and in \cite{GulLavFFA} (for finite fields of characteristic $>3$) line orbits (without those from $\mathcal{O}_6$) are obtained by other
methods than in \cite{DMP_OrbLineArX, DMP_OrbLineArXII, DMP_OrbLineMedit}. Also, in \cite{GulLavFFA} incidence matrices connected with the described
line orbits are given.
In \cite{CePa},  for even $q=2^n$, $n\ge3$
the $(q+1)$-arc $\mathcal{A}=\{(1,t,t^{2^h},t^{2^h+1})|t\in\F_q^+\} \subset\PG(3,q)$ with $gcd(n,h)=1$ (it is the twisted cubic for $h=1$), has been considered; it is shown that the orbits of points and of planes
under the projective stabilizer $G_h$ of $\mathcal{A}$ are similar to those under $G_1$ described in \cite{Hirs_PG3q}; moreover, the point-plane incidence matrix with
respect to $G_h$-orbits  mirrors the case h=1 described in  \cite{BDMP-TwCub}. In \cite{CePa}, it is also proved that for even $q$,  $q\equiv\xi\pmod3$, $\xi \in \{1,-1\}$, $G_h$ has $2q+7+\xi$  orbits on lines, providing a proof of a conjecture of ours \cite[Conjecture 8.2]{DMP_OrbLineArX,DMP_OrbLineMedit} in the case even $q$. Also, in \cite{CePa}, the point-line incidence matrix for even $q$ is given.

The class $\mathcal{O}_6$ is the most complicated since the number of orbits it contains depends on $q$,
whereas the remaining seven classes contain at most three orbits.

Recently, for all even and odd $q$, we have determined a family $\mathscr{F}$ of  orbits of this class $\mathcal{O}_6$; see \cite{DMP_OrbitsO6arX}. The orbits of the family depend on a parameter running over $\F_q^*$. Also, there is once more  special orbit with another description. The family $\mathscr{F}$ includes an essential part of all $\mathcal{O}_6$ orbits, whose complete classification remains an open problem.

 In this paper, using the properties of the family $\mathscr{F}$ from \cite{DMP_OrbitsO6arX}, we determine all the incidence matrices connected with $\mathscr{F}$ and the special orbit. For plane-line incidence matrices we obtain the numbers of distinct planes through distinct lines and, conversely, the numbers of distinct
lines lying in distinct planes. (By ``distinct planes'' we mean ``planes from distinct orbits'', and similarly for points and lines.) For point-line incidence matrices we obtain the numbers of distinct lines through distinct points and, vice versa, the numbers of distinct points lying on distinct lines. To obtain the needed numbers we prove relations connecting them based on the general properties of $\mathcal{O}_6$, see Section~\ref{sec:useful}. Then, using the properties of
$\mathscr{F}$, we consider intersections of lines generating orbits of $\mathscr{F}$ with distinct lines and planes.
This gives rise to some cubic and quartic equations, the numbers of solutions of which is equal to parameters of the incidence matrices, see Sections \ref{sec:incidL}, \ref{sec:lmuW&intersec}. Formulas for the numbers
of solutions are described.

The incidence submatrices considered in this paper are configurations in the sense of \cite{GroppConfig}, see Definition~\ref{def2_config} in Section \ref{subsec:J2}. They do not contain $2\times2$ submatrices whose entries are~1. The interest for such matrices is, for example, in the classical Zarankiewicz problem, see \cite{EK,RI,graphSurvey}, or in the construction of low-density parity-check (LDPC) codes, whose corresponding bipartite graph codes are free from the so-called 4-cycles \cite{BargZem,DGMP_BipGraph,HohJust}.

The paper is organized as follows. Section \ref{sec_prelimin} contains preliminaries. including the
description of $\mathscr{F}$, see Section \ref{subsec:knon orbits}. Some useful relations are given in Section \ref{sec:useful}. The incidence matrices and needed cubic equations for the special orbit  are obtained in Section~\ref{sec:incidL}. In Section \ref{sec:lmuW&intersec}, we consider intersections of lines generating orbits of $\mathscr{F}$ given parametrically with distinct lines and planes and form the
needed cubic and quartic equations. In Sections \ref{sec:even q}, \ref{sec:odd qne0}, and \ref{sec:q=0mod3}, the incidence matrices, respectively,  for even $q$, odd $q\not\equiv0\pmod3$, and $q\equiv0\pmod3$, are obtained.

\section{Preliminaries}\label{sec_prelimin}

\subsection{Twisted cubic}\label{subset_twis_cub}
We summarize the results on the twisted cubic of \cite{Hirs_PG3q} useful in this paper.

Let $\Pf(x_0,x_1,x_2,x_3)$ be a point of $\PG(3,q)$ with homogeneous coordinates $x_i\in\F_{q}$.
For $t\in\F_q^+$, let  $P(t)$ be a point such that
\begin{align}\label{eq2:P(t)}
  P(t)=\Pf(t^3,t^2,t,1)\text{ if }t\in\F_q;~~P(\infty)=\Pf(1,0,0,0).
\end{align}

Let $\C\subset\PG(3,q)$ be the \emph{twisted cubic} consisting of $q+1$ points $P_1,\ldots,P_{q+1}$  no four of which are coplanar.
We consider $\C$ in the canonical form
\begin{align}\label{eq2_cubic}
&\C=\{P_1,P_2,\ldots,P_{q+1}\}=\{P(t)\,|\,t\in\F_q^+\}.
\end{align}

A \emph{chord} of $\C$ is a line through a pair of real points of $\C$ or a pair of complex conjugate points. In the last  case it is an \emph{imaginary chord}. If the real points are distinct, it is a \emph{real chord}; if they coincide with each other, it is a \emph{tangent.}
Let $\TT_t$ be the tangent to $\C$ at $P(t)$; $\TT_t$  has a coordinate vector
\begin{align}\label{eq2:cvTang}
& L^{\T{tang}}_t=(t^4, 2t^3, 3t^2, t^2, -2t, 1),~t\in\F_q;~L^{\T{tang}}_\infty=(1, 0, 0, 0, 0, 0).
\end{align}

Let $\boldsymbol{\pi}(c_0,c_1,c_2,c_3)\subset\PG(3,q)$, be the plane with equation
\begin{align}\label{eq2_plane}
  c_0x_0+c_1x_1+c_2x_2+c_3x_3=0,~c_i\in\F_q.
\end{align}
The \emph{osculating plane} in the  point $P(t)\in\C$ is as follows:
\begin{align}\label{eq2_osc_plane}
&\pi_\T{osc}(t)=\boldsymbol{\pi}(1,-3t,3t^2,-t^3)\T{ if }t\in\F_q; ~\pi_\T{osc}(\infty)=\boldsymbol{\pi}(0,0,0,1).
\end{align}
 The $q+1$ osculating planes form the osculating developable $\Gamma$ to $\C$, that is a \emph{pencil of planes} for $q\equiv0\pmod3$ or a \emph{cubic developable} for $q\not\equiv0\pmod3$.

 An \emph{axis} of $\Gamma$ is a line of $\PG(3,q)$ which is the intersection of a pair of real planes or complex conjugate planes of $\Gamma$. In the last  case it is an \emph{imaginary axis}. If the real planes are distinct it is a \emph{real axis}; if they coincide with each other, it is a tangent to $\C$.

For $q\not\equiv0\pmod3$, the null polarity $\A$ \cite[Sections 2.1.5, 5.3]{Hirs_PGFF}, \cite[Theorem~21.1.2]{Hirs_PG3q} is given by
\begin{align}\label{eq2_null_pol}
&\Pf(x_0,x_1,x_2,x_3)\A=\boldsymbol{\pi}(x_3,-3x_2,3x_1,-x_0).
\end{align}

\begin{notation}\label{notation_1}
Throughout the paper, we consider $q\equiv\xi\pmod3$ with $\xi\in\{-1,0,1\}$. Many values depend on $\xi$ or have sense only for specific $\xi$.
If it is not clear by the context, we note this by remarks.
The following notation is used.
\begin{align*}
  &G_q && \T{the group of projectivities in } \PG(3,q) \T{ fixing }\C;\db  \\
&A^{tr}&&\T{the transposed matrix of }A;\db \\
&\#S&&\T{the cardinality of a set }S;\db\\
&\overline{AB}&&\T{the line through the points $A$ and }B;\db\\
&\triangleq&&\T{the sign ``equality by definition''}.
\end{align*}
\centerline{\textbf{Types $\pi$ of planes:}}
\begin{align*}
&\Gamma\T{-plane}  &&\T{an osculating plane of }\Gamma;\db \\
&d_\C\T{-plane}&&\T{a plane containing \emph{exactly} $d_\C$ distinct points of }\C,~d_\C\in\{0,2,3\};\db \\
&\overline{1_\C}\T{-plane}&&\T{a plane not in $\Gamma$ containing \emph{exactly} 1 point of }\C;\db \\
&\Pk&&\T{the list of possible types $\pi$ of planes},~\Pk\triangleq\{\Gamma,2_\C,3_\C,\overline{1_\C},0_\C\};\db\\
&\pi\T{-plane}&&\T{a plane of the type }\pi\in\Pk.
\end{align*}
\centerline{\textbf{Types $\pk$ of points with respect to the twisted cubic $\C$:}}
\begin{align*}
&\C\T{-point}&&\T{a point  of }\C;\db\\
&\mu_\Gamma\T{-point}&&\T{a point  off $\C$ lying on \emph{exactly} $\mu$ distinct osculating planes},\db\\
&&&\mu_\Gamma\in\{0_\Gamma,1_\Gamma,3_\Gamma\}\T{ for }\xi\ne0,~\mu_\Gamma\in\{(q+1)_\Gamma\}\T{ for }\xi=0;\db\\
&\Tr\T{-point}&&\T{a point  off $\C$  on a tangent to $\C$ for }\xi\ne0;\db\\
&\TO\T{-point}&&\T{a point  off $\C$ on a tangent and one osculating plane for }\xi=0;\db\\
&\RC\T{-point}&&\T{a point  off $\C$  on a real chord;}\db\\
&\IC\T{-point}&&\T{a point  on an imaginary chord (it is always off $\C$);}\\
&\Mk&&\T{the list of possible types $\pk$ of points},\db\\
&&&\Mk\triangleq\{\C,0_\Gamma,1_\Gamma,3_\Gamma,\Tr,\RC,\IC\}\T{ for }\xi\ne0,\db\\
&&&\Mk\triangleq\{\C,(q+1)_\Gamma,\TO,\RC,\IC\}\T{ for }\xi=0;\db\\
&\pk\T{-point}&&\T{a point of the type }\pk\in\Mk.
\end{align*}
\centerline{\textbf{Orbits under $G_q$:}}
\begin{align*}
&\N_\pi&&\T{the orbit of $\pi$-planes under }G_q,~\pi\in\Pk;\db\\
&\M_\pk&&\T{the orbit of $\pk$-points under }G_q,~\pk\in\Mk;\db\\
&\EnG\T{-line}&&\T{a line, external to the cubic $\C$, not in osculating planes,\db}\\
&&&\T{that is neither a chord nor an axis;}\db \\
&\O_6=\O_{\EnG}&&\T{the union (class) of all orbits of $\EnG$-lines}.
\end{align*}
\end{notation}

\begin{remark}
  The words ``nor an axis" are included to the definition of $\EnG$-line by the
context of \cite[Lemma 21.1.4]{Hirs_PG3q}.
\end{remark}

The following theorem summarizes the results from \cite{Hirs_PG3q} useful in this paper.
\begin{theorem}\label{th2_Hirs}
\emph{\cite[Chapter 21]{Hirs_PG3q}} The following properties of the twisted cubic $\C$ of \eqref{eq2_cubic} hold:
\begin{description}
  \item[(i)] The group $G_q$ acts triply transitively on $\C$;  $G_q\cong PGL(2,q)$ for $q\ge5$.

    The matrix $\MM$ corresponding to a projectivity of $G_q$ has the general form
  \begin{align}\label{eq2_M}
& \mathbf{M}=\left[
 \begin{array}{cccc}
 a^3&a^2c&ac^2&c^3\\
 3a^2b&a^2d+2abc&bc^2+2acd&3c^2d\\
 3ab^2&b^2c+2abd&ad^2+2bcd&3cd^2\\
 b^3&b^2d&bd^2&d^3
 \end{array}
  \right],~a,b,c,d\in\F_q,~ ad-bc\ne0.
\end{align}

  \item[(ii)] Under $G_q$, $q\ge5$, there are the following five orbits $\N_\pi$ of planes:
\begin{align*}
   &\N_1=\N_\Gamma=\{\Gamma\T{-planes}\},~\#\N_\Gamma=q+1;~\N_{2}=\N_{2_\C}=\{2_\C\T{-planes}\},~\#\N_{2_\C}=q^2+q;\db \\
 &\N_{3}=\N_{3_\C}=\{3_\C\T{-planes}\},~  \#\N_{3_\C}=(q^3-q)/6;~\N_{4}=\N_{\overline{1_\C}}=\{\overline{1_\C}\T{-planes}\},\db\\
 &\#\N_{\overline{1_\C}}=(q^3-q)/2;~\N_{5}=\N_{0_\C}=\{0_\C\T{-planes}\},~\#\N_{0_\C}=(q^3-q)/3.
 \end{align*}

  \item[(iii)] For $q\not\equiv0\pmod 3$, there are the following five orbits $\M_j$ of points:
  \begin{align}
&\M_1=\M_\C=\{\C\T{-points}\},~\M_2=\M_\Tr=\{\Tr\T{-points}\},~\M_3=\M_{3_\Gamma}=\{3_\Gamma\T{-points}\},\db\nt\\
&\M_4=\M_{1_\Gamma}=\{1_\Gamma\T{-points}\},~\M_5=\M_{0_\Gamma}=\{0_\Gamma\T{-points}\}.\db\nt\\
 &\T{For } q\equiv1\pmod 3,~ \M_{3_\Gamma}\cup\M_{0_\Gamma}=\{\RC\T{-points}\}, ~ \M_{1_\Gamma}=\{\IC\T{-points}\};\db\nt\\
 &\T{for } q\equiv-1\pmod 3,~\M_{3_\Gamma}\cup\M_{0_\Gamma}=\{\IC\T{-points}\},~
 \M_{1_\Gamma}=\{\RC\T{-points}\}. \db\nt\\
&  \M_j\A=\N_j,~\#\M_j=\#\N_j,~j=1,\ldots,5; \db\nt\\
&\M_\C\A=\N_\Gamma,~\M_\Tr\A=\N_{2_\C}, ~
\M_{3_\Gamma}\A=\N_{3_\C},~\M_{1_\Gamma}\A=\N_{\overline{1_\C}},~
\M_{0_\Gamma}\A=\N_{0_\C}.\label{eq2:pi(pk)}
\end{align}

\item[(iv)] For $q\equiv0\pmod 3$, the are the following five orbits $\M_j$ of points:
\begin{align*}
&\M_1=\M_\C=\{\C\T{-points}\},~\M_2=\M_{(q+1)_\Gamma}=\{(q+1)_\Gamma\T{-points}\},\db\\
&\#\M_\C=\#\M_{(q+1)_\Gamma}=q+1;~\M_3=\M_\TO=\{\TO\T{-points}\},~\#\M_\TO=q^2-1;\dbn\\
&\M_4=\M_\RC=\{\RC\T{-points}\},~M_5=\M_\IC=\{\IC\T{-points}\},\dbn\\
&\#\M_\RC=\#\M_\IC=(q^3-q)/2.
\end{align*}

  \item[(v)]  The lines of $\PG(3,q)$ can be partitioned into classes called $\O_i$ and $\O'_i=\O_i\A$, each of which is a union of orbits under $G_q$. The full list of the classes can be found in \emph{\cite[Lemma 21.1.4]{Hirs_PG3q}}. In particular, for all $q$, there is the class $\O_6=\O_{\EnG}=\{\EnG\T{-lines}\}$, $\#\O_6=\#\O_{\EnG}=(q^2-q)(q^2-1)$. If  $q\not\equiv0\pmod3,$ we have $\O_6=\O'_6=\O_6\A$.
\end{description}
\end{theorem}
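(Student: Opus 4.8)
Since Theorem~\ref{th2_Hirs} only collects classical facts established in \cite[Chapter 21]{Hirs_PG3q}, the plan is to indicate how each item can be reconstructed from the symmetric-cube description of $G_q$ together with incidence counting and the null-polarity duality, rather than to reprove them from scratch. For (i), I would first exhibit the homomorphism $\mathrm{PGL}(2,q)\to G_q$ explicitly: a M\"obius map $t\mapsto(at+b)/(ct+d)$ of the parameter line $\F_q^+$ should lift to $P(t)\mapsto P((at+b)/(ct+d))$, and a direct expansion of $((at+b)^3,(at+b)^2(ct+d),(at+b)(ct+d)^2,(ct+d)^3)$ shows that the row-vector product $P(t)\MM$ equals $(ct+d)^3P((at+b)/(ct+d))$. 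This simultaneously produces the matrix $\MM$ of \eqref{eq2_M} as the symmetric-cube (third Veronese) lift of the M\"obius matrix $\left[\begin{smallmatrix}a&b\\c&d\end{smallmatrix}\right]$ and shows $\MM\in G_q$. The map is a homomorphism into $G_q$ because $\det\MM=(ad-bc)^6\ne0$, and it is injective since a $2\times2$ matrix whose symmetric cube is scalar must itself be scalar. Triple transitivity on $\C$ then transfers from the sharp triple transitivity of $\mathrm{PGL}(2,q)$ on $\F_q^+$.

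For (ii)--(iv), the plan is to classify planes and points by their incidence with $\C$ and $\Gamma$, use transitivity to bound the number of orbits from above, and use counting to fix the cardinalities. By triple transitivity the $3_\C$-planes form a single orbit of size $\binom{q+1}{3}=(q^3-q)/6$, and the osculating planes form the orbit $\N_\Gamma$ of size $q+1$; the $2_\C$-, $\overline{1_\C}$-, and $0_\C$-planes are then separated by counting secants and tangents through the chosen points, the five cardinalities summing to the total number $q^3+q^2+q+1$ of planes. For the point orbits with $\xi\ne0$ I would invoke the null polarity $\A$ of \eqref{eq2_null_pol}, an involutory correlation carrying $\C$ to $\Gamma$ and intertwining the two $G_q$-actions; it transports the plane classification to the point classification, giving $\M_j\A=\N_j$ and the identifications \eqref{eq2:pi(pk)}, while the split of the $\RC$- and $\IC$-points between $q\equiv1$ and $q\equiv-1\pmod3$ records whether the relevant chords and axes are real or imaginary. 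The case $\xi=0$ in (iv) must be handled separately, since there $\Gamma$ degenerates to a pencil and $\A$ is unavailable, so the types $(q+1)_\Gamma$ and $\TO$ replace the generic ones.

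For (v), I would quote the partition of the lines of $\PG(3,q)$ into the classes $\O_i,\O_i'$ from \cite[Lemma~21.1.4]{Hirs_PG3q}, identify $\O_6$ with the $\EnG$-lines, and obtain $\#\O_6$ by subtracting the sizes of the remaining classes (chords, tangents, axes, and lines lying in osculating planes) from the total number $(q^2+1)(q^2+q+1)$ of lines, yielding $(q^2-q)(q^2-1)$; the self-duality $\O_6=\O_6\A$ for $\xi\ne0$ follows because $\A$ preserves each defining property of an $\EnG$-line. The main obstacle is the surjectivity half of (i): showing that \emph{every} projectivity fixing $\C$ arises from $\mathrm{PGL}(2,q)$. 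I expect to argue that such a projectivity induces a cross-ratio-preserving, hence M\"obius, permutation of the parameter $t$, and that one inducing the identity permutation fixes a frame contained in $\C$ (any five points of the arc are in general position) and is therefore the identity. Once (i) and the polarity $\A$ are in place, parts (iii)--(iv) become largely formal, and (ii), (v) reduce to the bookkeeping indicated above.
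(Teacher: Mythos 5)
The paper offers no proof of Theorem~\ref{th2_Hirs}: it is stated as a digest of \cite[Chapter~21]{Hirs_PG3q} and the citation \emph{is} the proof, so there is no internal argument to compare yours against. Your reconstruction follows the same classical route that the cited source takes --- the symmetric-cube lift of $\mathrm{PGL}(2,q)$ producing \eqref{eq2_M} via $P(t)\MM=(ct+d)^3P((at+b)/(ct+d))$, sharp triple transitivity transferred from the parameter line, the null polarity \eqref{eq2_null_pol} carrying $\C$ to $\Gamma$ and transporting the plane orbits to the point orbits when $q\not\equiv0\pmod3$, and subtraction of the remaining line classes to get $\#\O_6$ --- and the steps you make explicit (the determinant $(ad-bc)^6$, injectivity of the lift, the five-point frame inside the arc, the self-duality of $\O_6$ because $\A$ swaps ``external''$\leftrightarrow$``not in an osculating plane'' and ``chord''$\leftrightarrow$``axis'') all check out. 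Two places are thinner than you acknowledge. First, in (ii) your counting yields the \emph{cardinalities} of the $2_\C$-, $\overline{1_\C}$- and $0_\C$-classes but not that each is a single $G_q$-orbit; that transitivity is the real content (Hirschfeld gets it by identifying planes with binary cubic forms on the parameter line and classifying their factorization types under $\mathrm{PGL}(2,q)$ --- e.g.\ a $0_\C$-plane corresponds to an irreducible cubic with stabilizer of order~$3$), and filing it under ``bookkeeping'' undersells it. Second, the surjectivity half of (i) hinges on the claim that a projectivity of $\PG(3,q)$ fixing $\C$ induces a cross-ratio-preserving permutation of $\F_q^+$; this is true but is itself a nontrivial uniqueness property of the rational normal curve's parametrization and needs justification rather than assertion. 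Neither point is a wrong turn --- both are filled in exactly where you would look, in \cite[Chapter~21]{Hirs_PG3q} --- but they are the two steps a referee would ask you to expand.
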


\subsection{$\J_2$-free matrices and configurations}\label{subsec:J2}
We denote by $\J_2=\left[\begin{array}{cc}
                           1 & 1 \\
                           1 & 1
                         \end{array}
\right]$ the $2\times2$ matrix  consisting of all ones. A $01$-matrix not containing the submatrices $\J_2$ is called a $\J_2$\emph{-free matrix}.
Designs 2-$(v,k,1)$ and configurations are important examples of $\J_2$-free matrices.
\begin{definition}\label{def2_config}\cite{GroppConfig}
    A \emph{configuration} $(v_r,b_k)$  is an incidence structure of $v$ points and $b$ lines such that
 each line contains $k$ points, each point lies on $r$ lines, and
 two different points are connected by at most one line. If $v = b$ and, hence, $r = k$, the configuration is symmetric, denoted by $v_k$.
\end{definition}
\noindent For an introduction to configurations see \cite{DFGMP_SymConf,GroppConfig} and the references therein.

$\J_2$\emph-free matrices are known to be of interest, for instance,
in the classical Zarankiewicz problem, see \cite{EK,RI,graphSurvey}, but also in a recent
application to the low-density parity-check (LDPC)
codes, i.e. error correcting codes with a strongly sparse parity check matrix. The absence of the submatrices $\J_2$ allows to avoid the 4-cycles in the bipartite graphs corresponding to the codes, see e.g. \cite{BargZem,DGMP_BipGraph,HohJust} and the references therein.

\subsection{The known orbits of $\EnG$-lines from class $\O_6=\O_{\EnG}$}\label{subsec:knon orbits}
In this subsection we give results from \cite{DMP_OrbitsO6arX} useful in this paper.
\subsubsection{An orbit $\Os_\L$}\label{subsubsec:OL}
Let $Q_\beta$ and $Q_\infty$ be the points: $Q_\beta=\Pf(1,0,\beta,1), ~\beta\in\F_q$; $Q_\infty=\Pf(0,0,1,0)$. We consider the line $\ell_\L = \overline{Q_0Q_\infty}$ through the points $Q_\beta$ and $Q_\infty$. We have
\begin{equation}\label{eq3:ell0infdef}
 \ell_\L =\overline{\Pf(1,0,0,1)\Pf(0,0,1,0)}= \{\Pf(0,0,1,0), \Pf(1,0,\beta,1)\,|\,\beta\in\F_q\}.
\end{equation}
Let $\Os_\L$ be the orbit of $\ell_\L$ under $G_q$.
\begin{theorem}\label{th2:orbitell0inf}
\emph{\cite[Theorem 3.5]{DMP_OrbitsO6arX}}
\begin{description}
  \item[(i)] For $q\not\equiv0\pmod3$, we have $\Os_\L\subset\O_6=\O_{\EnG}$, i.e. the lines of $\Os_\L$ are $\EnG$-lines.
  \item[(ii)]
Let $q\equiv\xi\pmod3$. The orbit $\Os_L$ has size
\begin{align}\label{eq2:orbit0inf}
 \#\Os_\L=\left\{\begin{array}{@{\,}lccl}
                          (q^3-q)/3 & \T{if} & \xi=1, &q\T{ is even or }2\T{ is a non-cube in }\F_q;\\
                          (q^3-q)/12&\T{if} &\xi=1,  &q \T{ is odd and }2\T{ is a cube in }\F_q; \\
                          q^3-q& \T{if}&\xi=-1,  &q\T{ is even}; \\
                          (q^3-q)/2& \T{if}&\xi=-1,  &q\T{ is odd}.
                        \end{array}
 \right.
\end{align}
\end{description}
\end{theorem}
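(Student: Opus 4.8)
The plan is to treat the two parts separately. For part (i), I observe that being an $\EnG$-line is a $G_q$-invariant property (the group fixes $\C$ and hence its chords, tangents, axes, and osculating planes), so it suffices to verify that the single line $\ell_\L$ is external to $\C$, is not a chord, is not an axis, and lies in no osculating plane. From \eqref{eq3:ell0infdef} a general point of $\ell_\L$ has the shape $\Pf(\lambda,0,\mu,\lambda)$, so $\ell_\L$ is cut out by the two linear equations $x_1=0$ and $x_0=x_3$. Substituting $P(t)=\Pf(t^3,t^2,t,1)$ forces $t^2=0$, whence $t=0$, and then the remaining equations fail; the same computation over $\F_{q^2}$ shows $\ell_\L$ meets $\C$ in no point even after extension, ruling out real, imaginary, and tangent chords. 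For axes and osculating-plane containment I test when an osculating plane $\boldsymbol{\pi}(1,-3t,3t^2,-t^3)$ (or $\boldsymbol{\pi}(0,0,0,1)$) from \eqref{eq2_osc_plane} contains $\ell_\L$: this requires $c_2=3t^2=0$ and $c_0+c_3=0$ simultaneously, which is impossible when $3\ne0$, i.e. exactly when $\xi\ne0$. Hence no osculating plane contains $\ell_\L$, so it is neither an axis nor lies in $\Gamma$, and part (i) follows.

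For part (ii), I use the orbit--stabilizer theorem: by Theorem \ref{th2_Hirs}(i),(v), $\#G_q=q^3-q$ and $G_q\cong\mathrm{PGL}(2,q)$, so $\#\Os_\L=(q^3-q)/\#\mathrm{Stab}(\ell_\L)$, and the whole problem reduces to computing the order of the stabilizer. A projectivity $\MM$ of \eqref{eq2_M} fixes $\ell_\L$ as a set iff it sends the two spanning points $Q_0=\Pf(1,0,0,1)$ and $Q_\infty=\Pf(0,0,1,0)$ into $\ell_\L$; since $\MM$ is invertible this already forces $\MM\ell_\L=\ell_\L$. Applying the equations $x_1=0$ and $x_0=x_3$ to $\MM Q_\infty$ (the third column of $\MM$) and $\MM Q_0$ (the sum of the first and fourth columns) yields the four conditions $c(bc+2ad)=0$, $ac^2=bd^2$, $3(a^2b+c^2d)=0$, and $a^3+c^3=b^3+d^3$ on $(a,b,c,d)$, taken up to the scaling $(a,b,c,d)\mapsto(\rho a,\rho b,\rho c,\rho d)$ that represents $\mathrm{PGL}(2,q)$.

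I then solve this system by cases. When $c=0$ the conditions collapse to $b=0$ and $a^3=d^3$, so after normalizing $d=1$ the solutions are the cube roots of unity $a^3=1$; their number is $3$ if $\xi=1$ and $1$ if $\xi=-1$. When $c\ne0$ and $q$ is odd, the first three conditions give $b=-2ad/c$, $c^3=2a^3$, $d^3=-a^3$ (and the fourth condition is then automatic), so normalizing $a=1$ the count is (the number of solutions of $c^3=2$) times (the number of solutions of $d^3=-1$): this branch is empty unless $2$ is a cube, and when $\xi=1$ with $2$ a cube it contributes $3\cdot3=9$, while for $\xi=-1$ the cubing map is a bijection and it contributes $1\cdot1=1$. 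When $q$ is even the relation $2=0$ forces $b=0$ in the $c\ne0$ branch and then $ac^2=0$ gives a contradiction, so that branch is empty; note that $3\ne0$ here, consistent with $\xi\ne0$. Assembling the cases gives stabilizer orders $3,12,1,2$ and hence the four values of $\#\Os_\L$ in \eqref{eq2:orbit0inf}.

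The routine but delicate part is the final bookkeeping: keeping the projective normalizations consistent so that each stabilizer element is counted exactly once, correctly separating the dependence on $\xi$ (through the number $\gcd(3,q-1)$ of cube roots of unity) from the dependence on whether $2$ is a cube (which enters only for odd $q$ with $\xi=1$), and handling the characteristic-two degeneracy where $2=0$ but $3\ne0$. The main conceptual obstacle is recognizing that these two arithmetic phenomena are precisely what distinguishes the four cases of \eqref{eq2:orbit0inf}.
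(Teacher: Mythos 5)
The paper does not prove this statement: it is imported verbatim from \cite[Theorem 3.5]{DMP_OrbitsO6arX}, so there is no internal proof to compare against. Your argument is a legitimate self-contained derivation and, as far as I can check, it arrives at the correct stabilizer orders $3,12,1,2$ and hence at \eqref{eq2:orbit0inf}: part (i) correctly reduces to checking the single line $\ell_\L=\{x_1=0,\ x_0=x_3\}$ (and the chord/axis exclusions do persist over $\F_{q^2}$, since $t^2=0$ still forces $t=0$ and $3t^2=0$, $1-t^3=0$ remain incompatible there), and part (ii) is a clean orbit--stabilizer count in which the two arithmetic dichotomies ($\gcd(3,q-1)$ and cubicity of $2$) appear exactly where they should. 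One caveat you should repair: your incidence conditions $c(bc+2ad)=0$, $ac^2=bd^2$, $3(a^2b+c^2d)=0$, $a^3+c^3=b^3+d^3$ are obtained by letting $\MM$ of \eqref{eq2_M} act on \emph{column} vectors, but that action does not fix $\C$ (the first coordinate of $\MM P(t)^{tr}$ is $(at+c)(a^2t^2+c^2)$, not a cube); the matrix \eqref{eq2_M} acts on row vectors, giving instead $b(bc+2ad)=0$, $ab^2=cd^2$, $a^2c+b^2d=0$, $a^3+b^3=c^3+d^3$. You are rescued by the substitution $(a,b,c,d)\mapsto(a,c,b,d)$, which (using $3\ne0$) carries one system onto the other, so the solution counts coincide and your final bookkeeping --- $\gcd(3,q-1)$ solutions in the $bc=0$ branch, plus $\#\{b^3=2\}\cdot\#\{d^3=-1\}$ in the other branch for odd $q$, plus the characteristic-two collapse --- is correct; but as written the computation is performed in the wrong group and should be restated for the row action.
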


\subsubsection{Orbits $\Os_\mu$, $\mu\in \F_q^*\setminus\{1,1/9\}$}\label{subsubsece:lmu}
 Let $R_{\mu,\gamma}$  be the point: \\
  $ R_{\mu,\gamma}=\Pf(\gamma,\mu,\gamma,1)$, $\gamma\in\F_q^+$; $ R_{\mu,0}=\Pf(0,\mu,0,1)$, $R_{\mu,\infty}=\Pf(1,0,1,0)$,
 \begin{align}\label{eq2:mu}
 \mu\in \left\{\begin{array}{lcl}
                 \F_q^*\setminus\{1\} & \T{if}  &q\T{ is even or }q\equiv0\pmod3\\
                 \F_q^*\setminus\{1,1/9\}& \T{if}  &q\T{ is odd and }q \not\equiv0\pmod3
               \end{array}
 \right..
\end{align}
We consider the line $\ell_{\mu} = \overline{R_{\mu,0}R_{\mu,\infty}}$ through $R_{\mu,0}$ and $R_{\mu,\infty}$.
\begin{equation}\label{eq2:ellmu}
\ell_{\mu} = \overline{\Pf(0,\mu,0,1)\Pf(1,0,1,0)}= \{\Pf(\gamma,\mu,\gamma,1)|\gamma\in\F_q^+,~\mu\T{ is fixed}\}.
\end{equation}

Let $\Os_{\mu}$ be the orbit of $\ell_\mu$ under $G_q$.
\begin{theorem}\label{th2:orbitellmu}
\emph{\cite[Lemma 4.2,  Theorems 3.5, 5.2, 6.3]{DMP_OrbitsO6arX}}
\begin{description}
  \item[(i)] We have $R_{\mu,\gamma}\notin\pi_\T{osc}(\infty),~\gamma\in\F_q$.

  \item[(ii)] For all $q\ge5$, we have $\Os_\mu\subset\O_6=\O_{\EnG}$, i.e. the lines of $\Os_\mu$ are $\EnG$-lines.

  \item[(iii)]
Let $q\equiv\xi\pmod3$. Let the condition $\Upsilon_{q,\mu}$ be of the form
\begin{align}\label{eq2:Upsilon}
  \Upsilon_{q,\mu} \;:~\mu=-1/3,~q \equiv 1 \pmod {12}, ~1/3\T{ is a fourth power}.
\end{align}
The orbit $\Os_\mu$ has size
\begin{align}\label{eq2:orbitellmu}
 \#\Os_\mu=\left\{\begin{array}{@{\,}ll}
                          (q^3-q)/2& \T{if } q\T{ is even or }\mu \T{ is a non-square in }\F_q;\\
                          (q^3-q)/4& \T{if }\mu \T{ is a square in }\F_q\T{ and }\xi=0;\\
                          (q^3-q)/4&\T{if }q\T{ is odd},  \mu \T{ is a square in }\F_q,~\xi\ne0,\\
                          & \Upsilon_{q,\mu}\T{ does not hold};\\
                          (q^3-q)/12&\T{if }q\T{ is odd},~\xi\ne0,  \Upsilon_{q,\mu}\T{ holds}.
                        \end{array}
 \right.
\end{align}
\end{description}

\end{theorem}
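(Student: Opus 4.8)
The plan is to dispatch parts (i) and (ii) by direct incidence computations and to spend the real effort on the orbit-size formula (iii), which I would derive from the orbit--stabilizer theorem. Part (i) is immediate: substituting $R_{\mu,\gamma}=\Pf(\gamma,\mu,\gamma,1)$ into the equation $x_3=0$ of $\pi_\T{osc}(\infty)=\boldsymbol{\pi}(0,0,0,1)$ from \eqref{eq2_osc_plane} gives last coordinate $1\neq0$, so $R_{\mu,\gamma}\notin\pi_\T{osc}(\infty)$ for every $\gamma\in\F_q$.

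For part (ii) I would verify the four defining features of an $\EnG$-line. First, that $\ell_\mu$ is \emph{external and neither a chord nor a tangent}: intersecting $\ell_\mu$ with $\C$ over $\overline{\F_q}$, a point $P(t)=\Pf(t^3,t^2,t,1)$ lies on $\ell_\mu=\langle\Pf(0,\mu,0,1),\Pf(1,0,1,0)\rangle$ only if $t^3=t$ and $t^2=\mu$, forcing $t\in\{0,\pm1\}$ and hence $\mu\in\{0,1\}$, excluded by \eqref{eq2:mu}; thus $\ell_\mu$ meets $\overline{\C}$ in no point, ruling out real and imaginary chords and tangents. Second, that $\ell_\mu$ is \emph{in no osculating plane}: a line lies in $\pi_\T{osc}(t)$ iff both its generators do, and imposing $\Pf(0,\mu,0,1),\Pf(1,0,1,0)\in\boldsymbol{\pi}(1,-3t,3t^2,-t^3)$ yields $t(3\mu+t^2)=0$ and $1+3t^2=0$, whose only common solution forces $\mu=1/9$; this is excluded for odd $q\not\equiv0\pmod3$ by \eqref{eq2:mu}, while for even $q$ it would require $\mu=1/9=1$ (excluded) and for $q\equiv0\pmod3$ the relation $1+3t^2\equiv1$ has no root. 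Third, that $\ell_\mu$ is \emph{not an axis}: since an axis of $\Gamma$ is the intersection of two osculating planes it is contained in at least two planes of $\Gamma$, whereas $\ell_\mu$ lies in none. Together these give $\Os_\mu\subset\O_6=\O_{\EnG}$.

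For part (iii) I would use $\#\Os_\mu=\#G_q/\#\mathrm{Stab}_{G_q}(\ell_\mu)=(q^3-q)/\#\mathrm{Stab}_{G_q}(\ell_\mu)$, so everything reduces to the order of the line stabilizer. Writing a projectivity as the matrix $\MM$ of \eqref{eq2_M} with parameters $(a,b,c,d)$, the line is preserved iff the images $u\MM$ and $v\MM$ of its generators $u=\Pf(0,\mu,0,1)$, $v=\Pf(1,0,1,0)$ lie in $\langle u,v\rangle=\{\Pf(s,r\mu,s,r)\}$, which is cut out by $x_0=x_2$ and $x_1=\mu x_3$. This produces the four polynomial conditions
\begin{align*}
&a^3+3ab^2=ac^2+ad^2+2bcd, && a^2c+b^2c+2abd=\mu(c^3+3cd^2),\\
&3\mu a^2b+b^3=\mu(bc^2+2acd)+bd^2, && \mu(a^2d+2abc)+b^2d=\mu(3\mu c^2d+d^3),
\end{align*}
and the stabilizer is the set of their projective solutions with $ad-bc\neq0$.

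I would count these solutions by organising them according to the vanishing of $b$ and $c$. The diagonal branch $b=c=0$ reduces to $a^2=d^2$, contributing exactly the identity and $t\mapsto-t$ (the case $a=-d$), a baseline involution in odd characteristic. The swap branch $a=d=0$ reduces to $(b/c)^2=\mu$, so it contributes $t\mapsto\pm\sqrt{\mu}/t$ precisely when $\mu$ is a square. In odd characteristic with $\mu$ a square these merge into the Klein four-group $\{\,\mathrm{id},\,t\mapsto-t,\,t\mapsto\pm\sqrt{\mu}/t\,\}$ of order $4$; in even characteristic every $\mu$ is a square but $t\mapsto-t$ collapses to the identity and the two signs of $\sqrt{\mu}$ coincide, leaving a single nontrivial element, whence $\#\mathrm{Stab}=2$ (the first line of \eqref{eq2:orbitellmu}); and in characteristic $3$ the remaining non-diagonal, non-swap solutions degenerate because the coefficient $3$ in $\MM$ vanishes, so the stabilizer grows no further than order $4$. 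The final enlargement to order $12$ comes from solutions with $a,b,c,d$ all nonzero; eliminating variables, these force $-3$ and $-1$ to be squares (equivalently $q\equiv1\pmod{12}$) together with $\mu=-1/3$ and the fourth-power condition on $1/3$, i.e. exactly $\Upsilon_{q,\mu}$ of \eqref{eq2:Upsilon}, under which the Klein four-group is normalised by an element of order $3$ and $\mathrm{Stab}$ becomes a tetrahedral group $A_4$ of order $12$. Dividing $q^3-q$ by $\#\mathrm{Stab}\in\{2,4,12\}$ then reproduces \eqref{eq2:orbitellmu}. I expect the main obstacle to be this last step: proving that the all-nonzero solution set is governed by \emph{precisely} the condition $\Upsilon_{q,\mu}$, which requires reducing the quartic and cubic relations among $a,b,c,d$ exactly to the square- and fourth-power criteria while keeping the characteristic $2$ and $3$ degenerations cleanly separated.
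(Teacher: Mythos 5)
First, a point of reference: the paper does not prove Theorem \ref{th2:orbitellmu} at all --- it is imported verbatim from \cite[Lemma 4.2, Theorems 3.5, 5.2, 6.3]{DMP_OrbitsO6arX} --- so there is no in-paper argument to compare yours against; your attempt has to stand on its own. Parts (i) and (ii) of your proposal do stand: the substitution into $\pi_\T{osc}(\infty)$, the computation that $P(t)\in\ell_\mu$ forces $t^3=t$ and $t^2=\mu$ hence $\mu\in\{0,1\}$ (over any extension, which disposes of real chords, imaginary chords and tangents at once), and the computation that containment of both generators in $\boldsymbol{\pi}(1,-3t,3t^2,-t^3)$ forces $\mu=1/9$ (excluded in each characteristic) are all correct and complete, and since the two osculating-plane equations are field-independent the imaginary-axis case is genuinely covered.

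Part (iii) is where the gap lies. The orbit--stabilizer reduction and the four polynomial conditions on $(a,b,c,d)$ are set up correctly, and the branches $b=c=0$ and $a=d=0$ are analysed correctly, yielding the subgroup of order $2$ or $4$ as claimed. But two essential pieces are missing. First, your case split is not exhaustive: you treat ``$b=c=0$'', ``$a=d=0$'' and ``all of $a,b,c,d$ nonzero'', which omits the mixed patterns (e.g. $c=0$, $b\neq0$, or $a=0$ with $b,c,d\neq0$); each of these must be shown to contribute no stabilizer element, and while a direct check suggests they do not (for instance $c=0$ forces $2abd=0$ from the image of $\Pf(1,0,1,0)$, hence $b=0$ in odd characteristic), none of this is in the proposal. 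Second, and more seriously, the entire content of the hardest case --- that the all-nonzero branch is nonempty \emph{exactly} when $\Upsilon_{q,\mu}$ of \eqref{eq2:Upsilon} holds, and that it then enlarges the Klein four-group to a group of order exactly $12$ and no more --- is asserted (``eliminating variables, these force $-3$ and $-1$ to be squares \ldots together with $\mu=-1/3$ and the fourth-power condition'') rather than derived. You flag this yourself as the main obstacle, which is honest, but it means the proof of \eqref{eq2:orbitellmu} in the cases $\#\Os_\mu=(q^3-q)/4$ versus $(q^3-q)/12$ is not actually established: without the elimination being carried out, one cannot rule out additional stabilizer elements (which would make the listed orbit sizes too large) or extra sporadic conditions beyond $\Upsilon_{q,\mu}$. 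The strategy is the natural one and I see no reason it fails, but as written part (iii) is a plan, not a proof.
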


\section{Useful relations}\label{sec:useful}

\begin{notation}\label{notation_2}
In addition to Notation \ref{notation_1}, for $\pi\in\Pk$, $\pk\in\Mk$, and an orbit $\Os$ of $\EnG$-lines, the following notation is used:
\begin{align*}
&\Pi_{\pi}&&\T{the number of $\pi$-planes through a line from the orbit } \Os;\db\\
&\Lambda_{\pi}&&\T{the number of lines from the orbit $\Os$ in a $\pi$-plane}; \db\\
&\Pb_{\pk}&&\T{the  number of $\pk$-points on a line from  the orbit }\Os;\db\\
&\Lb_{\pk}&&\T{the number of lines from  the orbit $\Os$ through a $\pk$-point}.\
 \end{align*}
\end{notation}

From now on, when we use the notations and the results of this section, as the line-orbit~$\Os$ we consider the orbit $\Os_\L$ or  $\Os_\mu$, see Section \ref{subsubsece:lmu}. The situation will be clear by the context.

Also, from now on, we consider $q\ge5$.

The following lemma is obvious, see also \cite[Lemma 4.1, Corollary 4.2]{DMP_PointLineInc}, \cite[Lemma~4.1]{DMP_PlLineIncJG}.
\begin{lemma}\label{lem3:orb the same}
\begin{description}
  \item[(i)]
  The number  of lines from an orbit $\Os$ in a plane of an orbit $\N_\pi$ is the same for all planes of~$\N_\pi$;
conversely, the number of planes from the orbit $\N_\pi$ through a line of the orbit $\Os$ is the same for all lines of $\Os$.

\item[(ii)]
  The number of lines from an orbit $\Os$ through a point of an orbit $\M_\pk$ is the same for all points of~$\M_\pk$.
And, vice versa, the number of points from the orbit $\M_\pk$ on a line of the orbit $\Os$ is the same for all lines of $\Os$.

\item[(iii)]
 \begin{equation}\label{eq3:obtainPi1}
  \mathrm{\Pi}_{\pi}=\frac{\mathrm{\Lambda}_{\pi}\cdot\# \N_\pi}{\#\Os},~\Pb_{\pk}=\frac{\Lb_{\pk}\cdot\#\M_\pk}{\#\Os};
  \end{equation}
  \begin{equation}
  \mathrm{\Lambda}_{\pi}=\frac{\mathrm{\Pi}_{\pi}\cdot\#\Os}{\#\N_\pi},
  ~\Lb_{\pk}=\frac{\Pb_{\pk}\cdot\#\Os}{\#\M_\pk}.\label{eq3:obtainLamb1}
  \end{equation}
\end{description}
\end{lemma}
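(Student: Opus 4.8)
The plan is to prove the counting identities in Lemma~\ref{lem3:orb the same} by exploiting the transitivity of $G_q$ on the line-orbit $\Os$ together with the fact that $G_q$ sends each plane-orbit $\N_\pi$ (and each point-orbit $\M_\pk$) to itself. First I would establish part (i). Fix a line $\ell\in\Os$ and let $\Lambda_\pi(\ell)$ denote the number of $\pi$-planes containing $\ell$; I claim this is independent of $\ell$. Given two lines $\ell,\ell'\in\Os$, transitivity of $G_q$ on $\Os$ gives a projectivity $g\in G_q$ with $g(\ell)=\ell'$. Since $g$ is an incidence-preserving collineation that fixes the twisted cubic $\C$, it maps the set of $\pi$-planes through $\ell$ bijectively onto the set of $\pi$-planes through $\ell'$: incidence $\ell\subset\boldsymbol\pi$ is equivalent to $\ell'\subset g(\boldsymbol\pi)$, and $g(\boldsymbol\pi)$ is again a $\pi$-plane because $g$ permutes each plane-orbit $\N_\pi$ among itself (as $\N_\pi$ is a $G_q$-orbit). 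Hence the two counts agree. The ``conversely'' half is the dual statement: fixing a plane-orbit $\N_\pi$, for any two $\pi$-planes $\boldsymbol\pi,\boldsymbol\pi'$ there is $g\in G_q$ with $g(\boldsymbol\pi)=\boldsymbol\pi'$ (since $\N_\pi$ is itself a single orbit), and the same argument shows $g$ carries the lines of $\Os$ lying in $\boldsymbol\pi$ bijectively to those lying in $\boldsymbol\pi'$, using that $\Os$ is $G_q$-invariant.

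Part (ii) is proved by exactly the same transitivity argument with points in place of planes: a projectivity $g\in G_q$ carrying $\ell$ to $\ell'$ sends the $\pk$-points on $\ell$ bijectively to the $\pk$-points on $\ell'$ (the point-type $\pk$ is preserved because each $\M_\pk$ is $G_q$-invariant), giving the well-definedness of $\Pb_\pk$; and a $g$ carrying one $\pk$-point to another, which exists since $\M_\pk$ is a single orbit, shows $\Lb_\pk$ is independent of the chosen point.

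Part (iii) then follows by a standard double-counting (incidence-flag) argument, now that parts (i) and (ii) have made the four quantities well defined. For the first identity in \eqref{eq3:obtainPi1} I would count pairs $(\boldsymbol\pi,\ell)$ with $\boldsymbol\pi\in\N_\pi$, $\ell\in\Os$, and $\ell\subset\boldsymbol\pi$. Counting by planes gives $\Lambda_\pi\cdot\#\N_\pi$ (each plane of $\N_\pi$ contains exactly $\Lambda_\pi$ lines of $\Os$), and counting by lines gives $\Pi_\pi\cdot\#\Os$ (each line of $\Os$ lies in exactly $\Pi_\pi$ planes of $\N_\pi$); equating and solving yields $\Pi_\pi=\Lambda_\pi\cdot\#\N_\pi/\#\Os$. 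The same flag count over incident point--line pairs $(\Pf,\ell)$ with $\Pf\in\M_\pk$, $\ell\in\Os$ gives $\Pb_\pk=\Lb_\pk\cdot\#\M_\pk/\#\Os$. The two formulas in \eqref{eq3:obtainLamb1} are merely algebraic rearrangements of those in \eqref{eq3:obtainPi1}.

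There is essentially no hard obstacle here, which is why the authors call the lemma obvious; the only point requiring care is making the two invariances explicit, namely that $\Os$ is $G_q$-invariant (it is a $G_q$-orbit by hypothesis) and that each $\N_\pi$ and $\M_\pk$ is $G_q$-invariant (they are orbits under $G_q$ by Theorem~\ref{th2_Hirs}(ii)--(iv)), so that collineations in $G_q$ preserve all the relevant plane- and point-types. Once that is recorded, the transitivity and double-counting steps are routine.
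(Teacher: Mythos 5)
Your proof is correct and is exactly the standard argument the authors have in mind: the paper itself gives no proof, declaring the lemma obvious and citing earlier work, and the intended justification is precisely your combination of $G_q$-transitivity on each orbit (for the well-definedness in (i) and (ii)) with a flag double count (for (iii)). Nothing to add.
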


Let $\ell$ be a line.
Let $\Pi_\pi(\ell)$ be the number of $\pi$-planes through $\ell$, $\pi\in\Pk$. Let $\Pb_\pk(\ell)$ is the number of $\pk$-points on $\ell$, $\pk\in\Mk$.

\begin{lemma} \label{lem3:tang&2cplane}
\begin{description}
  \item[(i)] For all $q\ge5$, the union of $q+1$ $\Gamma$-planes and $q^2+q$  $2_\C$-planes can be partitioned into $q+1$ pencils of planes such that each pencil consists of one $\Gamma$-plane and $q$ $2_\C$-planes and the axis of the pencil is a tangent to $\C$.

  \item[(ii)]  For all $q\ge5$, among $\EnG$-lines, only lines lying in $2_\C$-planes can intersect tangents.
  If $q\not\equiv0\pmod3$, $\EnG$-lines lying in $2_\C$-planes intersect tangents in $\Tr$-points.
  If $q\equiv0\pmod3$, $\EnG$-lines lying in $2_\C$-planes intersect tangents in $\TO$-points.

  \item[(iii)] Let $\ell$ be an $\EnG$-line. Then  $
  \Pb_{\Tr}(\ell)=\Pi_{2_\C}(\ell)$ if $q\not\equiv0\pmod3$ and $
   \Pb_{\TO}(\ell)=\Pi_{2_\C}(\ell)$ if $q\equiv0\pmod3.$
\end{description}
\end{lemma}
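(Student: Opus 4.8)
The plan is to establish the three parts in order, since (ii) depends on (i), and (iii) is a direct counting consequence of (ii) combined with Lemma \ref{lem3:orb the same}. The whole argument rests on understanding how tangents to $\C$ sit inside the planes of $\PG(3,q)$ and which points off $\C$ lie on them.

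For part (i), I would start from the tangent line $\TT_t$ at $P(t)$, with coordinate vector given by \eqref{eq2:cvTang}. The key observation is that the osculating plane $\pi_\T{osc}(t)$ of \eqref{eq2_osc_plane} contains $\TT_t$: one verifies directly that $\TT_t$ lies in $\pi_\T{osc}(t)$ (the tangent is the intersection of the osculating plane with the ``second'' osculating structure), so $\TT_t$ is an axis of the pencil whose carrier is $\pi_\T{osc}(t)$. There are $q+1$ planes through any fixed line, one of which is the $\Gamma$-plane $\pi_\T{osc}(t)$ itself; the remaining $q$ planes of the pencil through $\TT_t$ meet $\C$ in points. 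Since $\TT_t$ is tangent to $\C$ at $P(t)$ and meets $\C$ only there, each of these $q$ planes contains $P(t)$ together with its further intersections with $\C$, and a counting/transitivity argument (using that $G_q$ acts triply transitively on $\C$, Theorem \ref{th2_Hirs}(i)) shows each such plane is a $2_\C$-plane. Letting $t$ run over $\F_q^+$ gives $q+1$ pencils, each with one $\Gamma$-plane and $q$ $2_\C$-planes, with tangent axis $\TT_t$. The total count $(q+1)\cdot 1=q+1$ $\Gamma$-planes and $(q+1)\cdot q=q^2+q$ $2_\C$-planes matches Theorem \ref{th2_Hirs}(ii) exactly, confirming the partition is complete and disjoint.

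For part (ii), suppose an $\EnG$-line $\ell$ meets a tangent $\TT_t$ in a point $X$. Since $\ell$ is external to $\C$ and is not a chord or axis, $X\notin\C$ and $X\neq P(t)$, so $X$ is a point off $\C$ lying on the tangent $\TT_t$. By the classification of point types in Notation \ref{notation_1} and Theorem \ref{th2_Hirs}(iii)--(iv), a point off $\C$ on a tangent is a $\Tr$-point when $\xi\neq0$ and a $\TO$-point when $\xi=0$; this gives the stated intersection-point types. To see that $\ell$ must lie in a $2_\C$-plane, I would argue that the plane spanned by $\ell$ and $\TT_t$ contains the tangent, hence by part (i) is either a $\Gamma$-plane or a $2_\C$-plane; since $\ell$ is an $\EnG$-line it cannot lie in a $\Gamma$-plane (osculating plane) by the very definition of $\EnG$-line, so this plane is a $2_\C$-plane, and $\ell$ lies in it.

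For part (iii), the idea is that each $\Tr$-point (resp. $\TO$-point) on $\ell$ corresponds bijectively to a tangent meeting $\ell$, and by part (ii) each such tangent is the axis of a unique pencil containing a $2_\C$-plane through $\ell$; conversely each $2_\C$-plane through $\ell$ supplies, via its pencil structure from part (i), exactly one tangent meeting $\ell$ in the appropriate point. This sets up a bijection between $\Tr$-points (resp. $\TO$-points) on $\ell$ and $2_\C$-planes through $\ell$, yielding $\Pb_{\Tr}(\ell)=\Pi_{2_\C}(\ell)$ for $\xi\neq0$ and $\Pb_{\TO}(\ell)=\Pi_{2_\C}(\ell)$ for $\xi=0$. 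The main obstacle I anticipate is verifying the bijection carefully in part (iii): I must check that the tangent through a $\Tr$- or $\TO$-point on $\ell$ is unique and that distinct such points give distinct $2_\C$-planes (and vice versa), ruling out the possibility that a single $2_\C$-plane through $\ell$ accounts for two tangent-incidence points or that one tangent lies in two of these planes. Establishing this one-to-one correspondence, rather than merely an inequality, is the delicate step; the rest reduces to the incidence bookkeeping already licensed by part (i) and Lemma \ref{lem3:orb the same}.
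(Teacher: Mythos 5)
Your overall strategy is sound and in fact more self-contained than the paper's: the paper proves (i) and (iii) essentially by citation (a table and a proposition from an earlier paper of the authors on plane--line incidence), whereas you reconstruct the pencil structure through each tangent and the bijection in (iii) directly. Parts (i) and (iii) of your argument go through, modulo the detail you already flag (each $2_\C$-plane contains exactly one tangent, which follows from the intersection divisor of a plane with $\C$ having degree $3$, so that of the two points of $\C$ in a $2_\C$-plane exactly one carries the tangent).

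There is, however, a genuine gap in part (ii) in the case $q\equiv0\pmod3$. You assert that ``a point off $\C$ on a tangent is \dots a $\TO$-point when $\xi=0$,'' citing the classification of point types. This is false as a statement about arbitrary points: for $\xi=0$ the osculating planes form a pencil with a common axis, that axis lies in $\pi_\T{osc}(t)$ together with $\TT_t$, and two distinct lines in a plane meet; hence every tangent contains exactly one point of the axis, which is a $(q+1)_\Gamma$-point (it lies on all $q+1$ osculating planes, not on exactly one), plus $q-1$ genuine $\TO$-points. This is consistent with $\#\M_\TO=q^2-1=(q+1)(q-1)$ from Theorem~\ref{th2_Hirs}(iv). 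So your argument, as written, does not exclude that the intersection point $X$ of an $\EnG$-line with a tangent is this $(q+1)_\Gamma$-point. The missing step --- which is precisely the observation the paper makes (``$\Gamma$-planes form a pencil whose axis can intersect only lines lying in a $\Gamma$-plane'') --- is that an $\EnG$-line $\ell$ cannot meet the axis at all: if it did, the plane spanned by $\ell$ and the axis would be one of the $q+1$ planes of the pencil through the axis, i.e.\ an osculating plane containing $\ell$, contradicting the definition of an $\EnG$-line. With that sentence added, $X$ is forced to be a $\TO$-point and both the $\xi=0$ case of (ii) and the bijection in (iii) are restored.
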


\begin{proof}
\begin{description}
  \item[(i)]
By \cite[Table 1]{DMP_PlLineIncJG}, there is one tangent in every $2_\C$-plane and, conversely, there are $q$  $2_\C$-planes through each tangent;
  in addition, there is one tangent in every $\Gamma$-plane and, vice versa, there is one $\Gamma$-plane through each tangent.

  \item[(ii)]
By definition, an $\EnG$-line does not belong to a $\Gamma$-plane and does not contain a $\C$-point. Also, we  take into account  the case~(i) and the fact that,
   for $q\equiv0\pmod3$, $\Gamma$-planes  form a pencil whose axis can intersect only lines lying in a $\Gamma$-plane.

  \item[(iii)] We take into account the cases (i) and (ii) and their proofs.  Also, by \cite[Proposition 5.6]{DMP_PlLineIncJG}, for a real chord $\mathcal{RC}$ and the two $\Gamma$-planes in its common points with  $\C$, the following holds:
every $2_\C $-plane through $\mathcal{RC}$ intersects one of these $\Gamma$-planes
in its tangent and the other in a non-tangent unisecant. The above means that the
 $2_\C $-planes through an $\ell_\mu$-line and the tangents intersecting it complete the set of  $2_\C $-planes containing this line. \qedhere
 \end{description}
\end{proof}

\begin{proposition}\label{prop3:PiG=LambG=PbC=LbC=0}
For any orbit $\Os$ of $\EnG$-lines the following holds.
 \begin{align}
&\Pi_\Gamma=\Lambda_{\Gamma}=\Pb_{\C}=\Lb_{\C}=0,\T{ for all }q.\db\label{eq3:=0}\\
&\Pb_{\Tr}=\Pi_{2_\C}\T{ if }q\not\equiv0\pmod3;~
   \Pb_{\TO}=\Pi_{2_\C}\T{ if }q\equiv0\pmod3.\label{eq3:2C=t}
\end{align}
\end{proposition}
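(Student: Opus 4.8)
The plan is to prove Proposition \ref{prop3:PiG=LambG=PbC=LbC=0} by unpacking the definition of an $\EnG$-line and then invoking the transfer relations from Lemma \ref{lem3:orb the same}(iii) together with Lemma \ref{lem3:tang&2cplane}. The statement splits naturally into two parts: the vanishing identities \eqref{eq3:=0} and the equality \eqref{eq3:2C=t}; the latter is essentially a restatement of Lemma \ref{lem3:tang&2cplane}(iii) and requires almost no work.

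First I would establish \eqref{eq3:=0}. By Notation \ref{notation_1}, an $\EnG$-line is by definition external to $\C$ and does not lie in any osculating plane of $\Gamma$. The first property says that such a line contains no $\C$-point, so $\Pb_{\C}(\ell)=0$ for every $\EnG$-line $\ell$; since this holds for all lines of the orbit $\Os$, the orbit-invariance in Lemma \ref{lem3:orb the same}(ii) gives $\Pb_{\C}=0$. The second property says that no $\Gamma$-plane passes through $\ell$, so $\Pi_\Gamma(\ell)=0$ for every $\ell\in\Os$, and Lemma \ref{lem3:orb the same}(i) gives $\Pi_{\Gamma}=0$. These two facts are the heart of the matter; the remaining two vanishings follow formally. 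For $\Lb_{\C}$, one uses the relation in \eqref{eq3:obtainLamb1}, namely $\Lb_{\C}=\Pb_{\C}\cdot\#\Os/\#\M_\C$, which is $0$ because $\Pb_{\C}=0$. Likewise $\Lambda_{\Gamma}=\Pi_{\Gamma}\cdot\#\Os/\#\N_\Gamma=0$ from the first relation in \eqref{eq3:obtainLamb1}. (Alternatively, $\Lambda_\Gamma=0$ directly: a $\Gamma$-plane contains no $\EnG$-line, so no line of $\Os$ lies in it, and $\Lb_\C=0$ because a $\C$-point lies on no external line, hence on no line of $\Os$.)

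The equality \eqref{eq3:2C=t} is then obtained by applying Lemma \ref{lem3:orb the same}(ii) to the line-level identity already proved in Lemma \ref{lem3:tang&2cplane}(iii). Explicitly, for every $\EnG$-line $\ell$ we have $\Pb_{\Tr}(\ell)=\Pi_{2_\C}(\ell)$ when $q\not\equiv0\pmod3$, and $\Pb_{\TO}(\ell)=\Pi_{2_\C}(\ell)$ when $q\equiv0\pmod3$. Since $\ell$ ranges over the orbit $\Os$, on which $\Pb_{\Tr}(\ell)$, $\Pb_{\TO}(\ell)$, and $\Pi_{2_\C}(\ell)$ are all constant by Lemma \ref{lem3:orb the same}, these per-line equalities become the orbit-level equalities $\Pb_{\Tr}=\Pi_{2_\C}$ and $\Pb_{\TO}=\Pi_{2_\C}$, which is exactly \eqref{eq3:2C=t}.

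I do not anticipate a genuine obstacle here, since each piece is either a direct consequence of the definition of an $\EnG$-line or a reformulation of the preceding lemmas. The only point requiring a little care is bookkeeping: one must be careful to distinguish the per-line quantities $\Pi_\pi(\ell),\Pb_\pk(\ell)$ from the orbit quantities $\Pi_\pi,\Pb_\pk,\Lambda_\pi,\Lb_\pk$, and to cite the correct invariance statement (Lemma \ref{lem3:orb the same}(i) for planes versus lines, (ii) for points versus lines) at each transfer from a single line to the whole orbit. Getting the $\xi=0$ versus $\xi\neq0$ casework aligned with the $\Tr$-point/$\TO$-point distinction, exactly as in Lemma \ref{lem3:tang&2cplane}, completes the argument.
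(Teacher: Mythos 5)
Your proposal is correct and follows essentially the same route as the paper, whose own proof is a one-line appeal to the definition of $\EnG$-lines together with Lemmas \ref{lem3:orb the same} and \ref{lem3:tang&2cplane}; you have merely spelled out the details that the paper leaves implicit. The bookkeeping between per-line quantities and orbit-level quantities is handled correctly.
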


\begin{proof}
The assertions follows from the definition of $\EnG$-lines and Lemmas \ref{lem3:orb the same}, \ref{lem3:tang&2cplane}.
\end{proof}

For $q\not\equiv0\pmod3$, we denote $\widetilde{\ell}=\ell\A$ the image of a line $\ell=\overline{P_1P_2}$ such that $\ell\A=P_1\A\cap P_2\A$ where $P_1$, $P_2$ are two distinct points of $\ell$ and $P_1\A$, $P_2\A$ are planes. Obviously, $\ell=\widetilde{\ell}\A$. By Theorem \ref{th2_Hirs}(iii), including \eqref{eq2:pi(pk)}, we have the following lemma.

\begin{lemma}\label{lem3:UPiPk}
  Let $q\not\equiv0\pmod3$. The following holds:
  \begin{align*}
&\Pi_{2_\C}(\widetilde{\ell})=\Pb_{\Tr}(\ell),~ \Pi_{3_\C}(\widetilde{\ell})=\Pb_{3_\Gamma}(\ell),~\Pi_{\overline{1_\C}}(\widetilde{\ell})=\Pb_{1_\Gamma}(\ell),~\Pi_{0_\C}(\widetilde{\ell})=\Pb_{0_\Gamma}(\ell).
  \end{align*}
\end{lemma}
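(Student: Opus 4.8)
The plan is to exploit that $\A$ is a null polarity, i.e., a correlation of $\PG(3,q)$ of order two that reverses incidence, sending points to planes, planes to points, and lines to lines (so in particular $\widetilde{\ell}=\ell\A$ is again a line and $\widetilde{\ell}\A=\ell$). First I would record the incidence-reversal principle in the precise form needed here: for a point $P$ and the line $\ell$, one has $P\in\ell$ if and only if $\widetilde{\ell}\subseteq P\A$. Indeed, if $P\in\ell$ then applying $\A$ reverses the inclusion to $\ell\A=\widetilde{\ell}\subseteq P\A$; conversely, if a plane $\pi$ contains $\widetilde{\ell}$, I would write $\pi=Q\A$ with $Q=\pi\A$ (using that $\A$ is involutory), apply $\A$ to $\widetilde{\ell}\subseteq Q\A$ to get $Q\subseteq\widetilde{\ell}\A=\ell$, and conclude $Q\in\ell$.

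Next I would observe that $P\mapsto P\A$ is therefore a bijection from the $q+1$ points on $\ell$ onto the $q+1$ planes through $\widetilde{\ell}$. The decisive point is that this bijection respects orbit types: by Theorem~\ref{th2_Hirs}(iii), and in particular by \eqref{eq2:pi(pk)}, the whole point orbit $\M_\pk$ is carried by $\A$ onto the whole plane orbit $\N_\pi$ for the matching types, namely $\M_\Tr\A=\N_{2_\C}$, $\M_{3_\Gamma}\A=\N_{3_\C}$, $\M_{1_\Gamma}\A=\N_{\overline{1_\C}}$, and $\M_{0_\Gamma}\A=\N_{0_\C}$. Hence a point $P\in\ell$ is a $\Tr$-point exactly when $P\A$ is a $2_\C$-plane through $\widetilde{\ell}$, and likewise for the three remaining types.

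Combining the two observations, the bijection $P\mapsto P\A$ restricts, for each of the four point types, to a bijection between the $\pk$-points on $\ell$ and the correspondingly typed planes through $\widetilde{\ell}$; counting the two sides yields $\Pb_{\Tr}(\ell)=\Pi_{2_\C}(\widetilde{\ell})$, $\Pb_{3_\Gamma}(\ell)=\Pi_{3_\C}(\widetilde{\ell})$, $\Pb_{1_\Gamma}(\ell)=\Pi_{\overline{1_\C}}(\widetilde{\ell})$, and $\Pb_{0_\Gamma}(\ell)=\Pi_{0_\C}(\widetilde{\ell})$, which is exactly the asserted list read in reverse. I expect no genuine obstacle here beyond the routine verification of the incidence-reversal equivalence; the whole substance is carried by the orbit correspondence \eqref{eq2:pi(pk)}, so the only care required is to confirm that $\A$ acts orbit-wise on points and planes (which is precisely what \eqref{eq2:pi(pk)} records) and that $\widetilde{\ell}\A=\ell$, guaranteeing that the map is a true involution on lines.
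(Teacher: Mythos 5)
Your proposal is correct and is essentially the paper's own argument: the paper derives the lemma directly from Theorem~\ref{th2_Hirs}(iii) and \eqref{eq2:pi(pk)}, tacitly using exactly the incidence-reversing bijection $P\mapsto P\A$ between the points of $\ell$ and the planes through $\widetilde{\ell}$ that you spell out. Your write-up merely makes explicit the involution and incidence-reversal steps that the paper leaves implicit.
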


\begin{lemma}\label{lem3:EnG=ext}
  Let $\ell$ be an $\EnG$-line. Let $\widetilde{\ell}=\ell\A$. Then $\ell=\widetilde{\ell}\A$ and
\begin{align}\label{eq3:EnG=ext}
&\Pi_{\overline{1_\C}}(\ell)+2\Pi_{2_\C}(\ell)+3\Pi_{3_\C}(\ell)=q+1,\T{ for all }q;\db\\
&\Pi_{\overline{1_\C}}(\widetilde{\ell})+2\Pi_{2_\C}(\widetilde{\ell})+3\Pi_{3_\C}(\widetilde{\ell})=q+1,~q\not\equiv0\pmod3;\label{eq3:EnG=ext2}\db\\
  &\Pb_{1_\Gamma}(\ell)+2\Pb_{\Tr}(\ell)+3\Pb_{3_\Gamma}(\ell)=q+1,~q\not\equiv0\pmod3;\db \label{eq3:EnG=ext3}\\
  &\Pb_{1_\Gamma}(\widetilde{\ell})+2\Pb_{\Tr}(\widetilde{\ell})+3\Pb_{3_\Gamma}(\widetilde{\ell})=q+1,~q\not\equiv0\pmod3. \label{eq3:EnG=ext4}
  \end{align}
\end{lemma}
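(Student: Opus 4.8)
The plan is to obtain the first relation \eqref{eq3:EnG=ext} by a single incidence count and then to deduce the remaining three by feeding it through the polarity $\A$ together with the dictionary of Lemma~\ref{lem3:UPiPk}. The equality $\ell=\widetilde{\ell}\A$ is immediate: $\A$ is a null polarity, hence an involutory correlation, so applying it twice returns the original line; this is already recorded in the paragraph preceding Lemma~\ref{lem3:UPiPk}.

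For \eqref{eq3:EnG=ext} I would count in two ways the incidences between the $q+1$ points of $\C$ and the pencil of $q+1$ planes through $\ell$. Because $\ell$ is an $\EnG$-line it contains no $\C$-point, so every point of $\C$ lies off $\ell$ and therefore lies in exactly one plane of the pencil; summing over the $\C$-points gives $q+1$ incidences. Counting instead plane by plane, a plane through $\ell$ contributes its number of $\C$-points, namely $0,1,2,3$ according as it is a $0_\C$-, $\overline{1_\C}$-, $2_\C$-, or $3_\C$-plane, while a $\Gamma$-plane would contribute its single osculation point. By Proposition~\ref{prop3:PiG=LambG=PbC=LbC=0} no $\Gamma$-plane passes through $\ell$ (that is, $\Pi_\Gamma=0$), so the $\Gamma$-term drops out and the two counts yield $\Pi_{\overline{1_\C}}(\ell)+2\Pi_{2_\C}(\ell)+3\Pi_{3_\C}(\ell)=q+1$, which is \eqref{eq3:EnG=ext}. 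This argument is valid for every $q$, since it uses only that $\ell$ misses $\C$ and avoids all osculating planes.

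The other three relations are then formal consequences for $q\not\equiv0\pmod3$. Since $\O_6=\O_6\A$ in this case by Theorem~\ref{th2_Hirs}(v), the line $\widetilde{\ell}=\ell\A$ is again an $\EnG$-line, so \eqref{eq3:EnG=ext} applied to $\widetilde{\ell}$ is exactly \eqref{eq3:EnG=ext2}. Substituting the four equalities of Lemma~\ref{lem3:UPiPk}, which rewrite $\Pi_{\overline{1_\C}}(\widetilde{\ell})$, $\Pi_{2_\C}(\widetilde{\ell})$, $\Pi_{3_\C}(\widetilde{\ell})$ in terms of $\Pb_{1_\Gamma}(\ell)$, $\Pb_{\Tr}(\ell)$, $\Pb_{3_\Gamma}(\ell)$, converts \eqref{eq3:EnG=ext2} into \eqref{eq3:EnG=ext3}. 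Finally, applying \eqref{eq3:EnG=ext3} to $\widetilde{\ell}$ in place of $\ell$ (again an $\EnG$-line, with $\widetilde{\widetilde{\ell}}=\ell$) gives \eqref{eq3:EnG=ext4}.

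The only genuinely substantive step is the double count for \eqref{eq3:EnG=ext}; the main thing to watch there is the bookkeeping of $\C$-points per plane type, in particular remembering that $\Gamma$-planes also carry one $\C$-point and so must be excluded explicitly via $\Pi_\Gamma=0$ rather than being silently ignored. Everything after that is a transport of \eqref{eq3:EnG=ext} along the polarity and a direct substitution from Lemma~\ref{lem3:UPiPk}, so no further obstacle is expected.
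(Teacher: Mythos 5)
Your proof is correct and follows essentially the same route as the paper: establish \eqref{eq3:EnG=ext}, apply it to $\widetilde{\ell}$ (which is again an $\EnG$-line since $\O_6=\O_6\A$ for $q\not\equiv0\pmod3$) to get \eqref{eq3:EnG=ext2}, and then transport both identities through the null polarity via Lemma~\ref{lem3:UPiPk} to obtain \eqref{eq3:EnG=ext3} and \eqref{eq3:EnG=ext4}. The only difference is that for \eqref{eq3:EnG=ext} the paper cites an external result (Theorem~4.3 of the plane-line incidence paper) together with $\Pi_\Gamma=0$ from \eqref{eq3:=0}, whereas you supply a self-contained double count of incidences between the points of $\C$ and the pencil of planes through $\ell$ --- which is a valid (and arguably preferable, because self-contained) substitute for that citation.
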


\begin{proof}
  As $\ell$ is external with respect to $\C$ we may use \cite[Theorem 4.3]{DMP_PlLineIncJG}. Then we apply \eqref{eq3:=0} and obtain \eqref{eq3:EnG=ext}. As $\widetilde{\ell}$ also is an $\EnG$-line,  \eqref{eq3:EnG=ext2} can be obtained similarly.

  Then  on \eqref{eq3:EnG=ext2}   we act by the null-polarity $\A$, taking into account that $\ell\A\A=\ell$ and applying Lemma \ref{lem3:UPiPk}, that implies \eqref{eq3:EnG=ext3}. Finally, we act by  $\A$ on \eqref{eq3:EnG=ext} and obtain \eqref{eq3:EnG=ext4}.
\end{proof}

\begin{proposition}
  For an orbit $\Os$  the following holds.
 \begin{align}\label{eq3:Pi1Pi2Pi3}
&\Pi_{\overline{1_\C}}+2\Pi_{2_\C}+3\Pi_{3_\C}=q+1,\T{ for all }q\db\\
&\Pi_{0_\C}=\Pi_{2_\C}+2\Pi_{3_\C},\T{ for all }q;\label{eq3:Pi1Pi2Pi3b}\db\\
&\Pb_{1_\Gamma}+2\Pb_{\Tr}+3\Pb_{3_\Gamma}=q+1,~q\not\equiv0\pmod3,\label{eq3:Pi1Pi2Pi3c}\db\\
&\Pb_{0_\Gamma}=\Pb_{\Tr}+2\Pb_{3_\Gamma},~q\not\equiv0\pmod3\label{eq3:Pi1Pi2Pi3d}.
\end{align}
\end{proposition}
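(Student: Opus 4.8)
The plan is to derive all four identities from Lemma~\ref{lem3:EnG=ext} together with the elementary fact that a line of $\PG(3,q)$ lies in exactly $q+1$ planes and carries exactly $q+1$ points. First I would fix a line $\ell$ of the orbit $\Os$; by Lemma~\ref{lem3:orb the same} the per-line counts $\Pi_\pi(\ell)$ and $\Pb_\pk(\ell)$ are independent of the choice of $\ell\in\Os$ and hence coincide with the orbit quantities $\Pi_\pi$ and $\Pb_\pk$ of Notation~\ref{notation_2}. Under this identification, equation~\eqref{eq3:EnG=ext} of Lemma~\ref{lem3:EnG=ext} reads verbatim as \eqref{eq3:Pi1Pi2Pi3}, and \eqref{eq3:EnG=ext3} reads verbatim as \eqref{eq3:Pi1Pi2Pi3c} (valid for $q\not\equiv0\pmod3$, the range in which the null polarity $\A$, and thus \eqref{eq3:EnG=ext3}, is available). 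So the first and third identities require no further work.

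For \eqref{eq3:Pi1Pi2Pi3b} I would use that the five plane orbits $\N_\Gamma,\N_{2_\C},\N_{3_\C},\N_{\overline{1_\C}},\N_{0_\C}$ partition the planes of $\PG(3,q)$ by Theorem~\ref{th2_Hirs}(ii); counting the $q+1$ planes through $\ell$ according to their orbit gives
\[
\Pi_\Gamma+\Pi_{2_\C}+\Pi_{3_\C}+\Pi_{\overline{1_\C}}+\Pi_{0_\C}=q+1.
\]
Since $\Pi_\Gamma=0$ by \eqref{eq3:=0}, subtracting this relation from \eqref{eq3:Pi1Pi2Pi3} cancels both the constant $q+1$ and the term $\Pi_{\overline{1_\C}}$, leaving $\Pi_{0_\C}-\Pi_{2_\C}-2\Pi_{3_\C}=0$, which is \eqref{eq3:Pi1Pi2Pi3b}. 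This holds for all $q$, since neither the partition, nor $\Pi_\Gamma=0$, nor \eqref{eq3:Pi1Pi2Pi3} is restricted in $q$.

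The fourth identity \eqref{eq3:Pi1Pi2Pi3d} is the exact point-analogue. For $q\not\equiv0\pmod3$ the five point orbits $\M_\C,\M_\Tr,\M_{3_\Gamma},\M_{1_\Gamma},\M_{0_\Gamma}$ partition the points of $\PG(3,q)$ by Theorem~\ref{th2_Hirs}(iii); counting the $q+1$ points of $\ell$ by orbit and using $\Pb_\C=0$ from \eqref{eq3:=0} gives $\Pb_\Tr+\Pb_{3_\Gamma}+\Pb_{1_\Gamma}+\Pb_{0_\Gamma}=q+1$, and subtracting \eqref{eq3:Pi1Pi2Pi3c} yields $\Pb_{0_\Gamma}=\Pb_\Tr+2\Pb_{3_\Gamma}$. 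The only place demanding care is precisely this point count: I must sum over the five genuine orbits and not over the seven labels listed in $\Mk$, because the $\RC$- and $\IC$-types are merely alternative names for the unions $\M_{3_\Gamma}\cup\M_{0_\Gamma}$ and $\M_{1_\Gamma}$ (the assignment depending on $q\bmod 3$), so mixing the two labellings would double-count. Apart from this bookkeeping, no step is genuinely hard: the whole proposition reduces to two linear eliminations, whose only nontrivial ingredients are the correct partitions of planes and points and the vanishing of $\Pi_\Gamma$ and $\Pb_\C$.
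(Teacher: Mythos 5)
Your argument is correct and follows essentially the same route as the paper: the first and third identities are the per-line relations of Lemma~\ref{lem3:EnG=ext} promoted to orbit constants via Lemma~\ref{lem3:orb the same}, and the second and fourth come from subtracting these from the counts of the $q+1$ planes through (resp.\ $q+1$ points on) a line, using $\Pi_\Gamma=\Pb_\C=0$ from \eqref{eq3:=0}. The extra care you take about summing over the five genuine point orbits rather than the seven labels in $\Mk$ is sound but matches what the paper does implicitly.
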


\begin{proof}
We use \eqref{eq3:=0} and Lemma \ref{lem3:orb the same}(i)(ii), that gives \eqref{eq3:Pi1Pi2Pi3} and  \eqref{eq3:Pi1Pi2Pi3c}. Then we again apply \eqref{eq3:=0}  and the facts that there are $q+1$ planes through a line and $q+1$ points on a line. This gives \eqref{eq3:Pi1Pi2Pi3b} and  \eqref{eq3:Pi1Pi2Pi3d}.
\end{proof}

\section{The point-line and plane-line incidence submatrices for the orbit $\Os_\L$, $q\not\equiv0\pmod3$}\label{sec:incidL}
In this section $q\not\equiv0\pmod3$.
We consider the line $\ell_\L$ and its orbit $\Os_\L$, see Section \ref{subsubsec:OL}. When we use the notations and the results of Section \ref{sec:useful}, we take $\Os_\L$ as the line orbit $\Os$.

\begin{lemma}\label{lem4:Tpoint=2Gammapoint}
  For $q\not\equiv0\pmod3$,  every $\Tr$-point lies in exactly two osculating planes.
\end{lemma}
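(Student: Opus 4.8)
The plan is to exploit homogeneity to reduce to a single tangent and then count roots of an explicit cubic in the osculating-plane parameter. Since $G_q$ fixes $\C$, it permutes the tangents of $\C$ and the osculating planes forming $\Gamma$; moreover, by Theorem~\ref{th2_Hirs}(iii) the $\Tr$-points constitute the single orbit $\M_\Tr$. Hence the number of osculating planes through a $\Tr$-point is the same for every $\Tr$-point (this is the same averaging principle as in Lemma~\ref{lem3:orb the same}), so it suffices to treat the $\Tr$-points lying on one conveniently chosen tangent.

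First I would take the tangent $\TT_0$ at $P(0)=\Pf(0,0,0,1)$. Using $P(t)=\Pf(t^3,t^2,t,1)$ from \eqref{eq2:P(t)} and its derivative direction $\Pf(0,0,1,0)$, one gets $\TT_0=\overline{\Pf(0,0,0,1)\Pf(0,0,1,0)}=\{x_0=x_1=0\}$. Checking which of its points lie on $\C$ shows that $P(0)$ is the only one, so the $\Tr$-points on $\TT_0$ are exactly $X=\Pf(0,0,1,c)$ with $c\in\F_q$. Next I would impose incidence with the osculating planes. By \eqref{eq2_osc_plane}, the finite plane $\pi_\T{osc}(s)=\boldsymbol{\pi}(1,-3s,3s^2,-s^3)$ contains $X$ iff $3s^2-cs^3=0$, i.e. $s^2(3-cs)=0$, while $\pi_\T{osc}(\infty)=\boldsymbol{\pi}(0,0,0,1)$ contains $X$ iff $c=0$.

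Then I would split into the two cases. If $c\neq0$, the hypothesis $q\not\equiv0\pmod3$ forces $3\neq0$ in $\F_q$, so $s=3/c$ is a genuine root distinct from $s=0$; thus the osculating planes through $X$ are precisely $\pi_\T{osc}(0)$ and $\pi_\T{osc}(3/c)$ (and $\pi_\T{osc}(\infty)$ is excluded since $c\neq0$), giving exactly two. If $c=0$, then $X=\Pf(0,0,1,0)$, the equation $3s^2=0$ yields only $s=0$, and $X$ additionally lies on $\pi_\T{osc}(\infty)$, again exactly two. Combining both cases and invoking the orbit argument completes the proof.

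The computation itself is routine; the two points requiring care are the degenerate $\Tr$-point $\Pf(0,0,1,0)$ at the far end of the tangent, whose second osculating plane is $\pi_\T{osc}(\infty)$ rather than a finite one, and the decisive use of $q\not\equiv0\pmod3$. Conceptually, every point of $\TT_0$ makes $s=0$ a \emph{double} root of the cubic $x_0-3sx_1+3s^2x_2-s^3x_3$ (this reflects the contact of the tangent with the developable $\Gamma$), so the cubic factors as $s^2(3-cs)$ and can have at most one further root; the point $q\not\equiv0\pmod3$ is exactly what prevents this from degenerating into a triple root $s^2\cdot(-cs)$, which is the $q\equiv0\pmod3$ situation where a $\TO$-point lies on only one osculating plane. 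I expect verifying that no osculating plane is counted twice and that the $\infty$-plane is handled consistently to be the only genuine bookkeeping hazard.
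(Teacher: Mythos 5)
Your proof is correct, but it takes a genuinely different route from the paper: the paper disposes of this lemma in one line by citing the point--plane incidence table of the earlier work \cite[Table 1]{BDMP-TwCub}, where the number of osculating planes through a $\Tr$-point is already recorded, whereas you reprove the fact from scratch. Your reduction to the single tangent $\TT_0=\{x_0=x_1=0\}$ is legitimate ($G_q$ is transitive on $\M_\Tr$ and permutes the planes of $\N_\Gamma$, so the count is an orbit invariant), and the computation checks out: for $X=\Pf(0,0,1,c)$ the incidence condition with $\pi_\T{osc}(s)$ is $s^2(3-cs)=0$, giving $\pi_\T{osc}(0)$ and $\pi_\T{osc}(3/c)$ when $c\ne0$, and $\pi_\T{osc}(0)$ together with $\pi_\T{osc}(\infty)$ when $c=0$ (the point $Q_\infty=\Pf(0,0,1,0)$, which the paper also singles out in the proof of Lemma \ref{lem4:Nm&Pb}); injectivity of $t\mapsto\pi_\T{osc}(t)$ rules out coincidences. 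What your approach buys is self-containedness and a transparent explanation of why $q\not\equiv0\pmod3$ is needed --- the factor $3$ is exactly what keeps $s=0$ from becoming a triple root, which is the degeneration producing $\TO$-points on a single osculating plane in characteristic $3$. What the paper's citation buys is brevity, since the relevant incidence numbers were already tabulated there.
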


\begin{proof}
  The assertion follows from \cite[Table 1]{BDMP-TwCub}.
\end{proof}

\begin{lemma}\label{lem4:cubic_equation}
 Let the point $Q_\beta=\Pf(1,0,\beta,1)$ lie in an osculating plane $\pi_\T{osc}(t)$, $t\in\F_q$. Then the values of $\beta$ and $t$ satisfy the following cubic equation
 \begin{equation}\label{eq4:cubic_equation}
F_\beta(t)=  t^3-3\beta t^2- 1=0,~\beta,t\in\F_q.
 \end{equation}
\end{lemma}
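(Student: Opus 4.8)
The plan is to substitute the coordinates of $Q_\beta$ into the equation of a general osculating plane and read off the resulting condition on $\beta$ and $t$. Recall from \eqref{eq2_osc_plane} that for $t\in\F_q$ the osculating plane $\pi_\T{osc}(t)=\boldsymbol{\pi}(1,-3t,3t^2,-t^3)$, which by the plane equation \eqref{eq2_plane} consists of all points $\Pf(x_0,x_1,x_2,x_3)$ satisfying
\begin{align*}
x_0-3t\,x_1+3t^2\,x_2-t^3\,x_3=0.
\end{align*}
The point in question is $Q_\beta=\Pf(1,0,\beta,1)$, so I would simply set $(x_0,x_1,x_2,x_3)=(1,0,\beta,1)$ in this equation.

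Carrying out the substitution gives $1-3t\cdot 0+3t^2\cdot\beta-t^3\cdot 1=0$, i.e. $-t^3+3\beta t^2+1=0$. Multiplying by $-1$ yields $t^3-3\beta t^2-1=0$, which is exactly $F_\beta(t)=0$ as claimed in \eqref{eq4:cubic_equation}. So $Q_\beta$ lies in $\pi_\T{osc}(t)$ for a given $t\in\F_q$ if and only if $\beta$ and $t$ satisfy this cubic relation, and the two directions of the equivalence are immediate since the computation is a single algebraic identity.

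There is essentially no obstacle here: the statement is a direct verification, and the only point to be mildly careful about is the hypothesis $t\in\F_q$ (the case $t=\infty$, with osculating plane $\boldsymbol{\pi}(0,0,0,1)$ and equation $x_3=0$, is excluded, and indeed $Q_\beta$ has $x_3=1\neq 0$ so it never lies in $\pi_\T{osc}(\infty)$, consistent with the lemma restricting attention to finite $t$). The role of the restriction $q\not\equiv0\pmod3$ is only to place us in the setting of Section~\ref{sec:incidL} where $\Gamma$ is a genuine cubic developable and the $\Tr$-points and osculating-plane structure behave as needed for the subsequent counting; it is not used in deriving the cubic itself. I therefore expect the proof to consist of precisely the one-line substitution above together with the sign normalization.
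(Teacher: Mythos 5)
Your proof is correct and is exactly the paper's argument: the paper's own proof consists of the single observation that $\pi_\T{osc}(t)=\boldsymbol{\pi}(1,-3t,3t^2,-t^3)$, with the substitution of $Q_\beta=\Pf(1,0,\beta,1)$ into the plane equation left implicit. Your added remarks about the excluded case $t=\infty$ and the role of $q\not\equiv0\pmod3$ are accurate but not needed for the statement itself.
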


 \begin{proof}
  By \eqref{eq2_osc_plane}, $\pi_\T{osc}(t)=\boldsymbol{\pi}(1,-3t,3t^2,-t^3)$, $t\in\F_q$.
   \end{proof}

We denote by $\V_m$ the number of $\beta\in\F_q$ such that the cubic equation $F_\beta(t)$ \eqref{eq4:cubic_equation}
has exactly $m$ distinct solutions $t$ in $\F_q$, $m=0,1,2,3$.

\begin{lemma}\label{lem4:Nm&Pb}
   For the orbit $\Os_\L$, the following holds.
  \begin{equation}\label{eq4:solut&incid}
    \Pb_\Tr=\V_2+1,~\Pb_{1_\Gamma}=\V_1.
      \end{equation}
\end{lemma}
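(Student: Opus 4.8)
The plan is to determine, point by point, the type of each of the $q+1$ points of $\ell_\L$ listed in \eqref{eq3:ell0infdef}, and then to read off $\Pb_\Tr$ and $\Pb_{1_\Gamma}$ from the counts $\V_m$. The guiding principle is that, since $\Os_\L\subset\O_6$ by Theorem~\ref{th2:orbitell0inf}(i), the line $\ell_\L$ is an $\EnG$-line and therefore all of its points lie off $\C$; for such a point (with $\xi\neq0$) the type is one of $0_\Gamma,1_\Gamma,3_\Gamma,\Tr$, and these lie on $0,1,3$ osculating planes by definition and on exactly $2$ osculating planes by Lemma~\ref{lem4:Tpoint=2Gammapoint}, respectively. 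Because these four numbers are pairwise distinct, the number of distinct osculating planes through an off-$\C$ point determines its type; in particular, a point off $\C$ lying on exactly two osculating planes must be a $\Tr$-point.

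First I would handle the $q$ points $Q_\beta=\Pf(1,0,\beta,1)$ with $\beta\in\F_q$. Since the last coordinate of $Q_\beta$ is $1\neq0$ while $\pi_\T{osc}(\infty)=\boldsymbol{\pi}(0,0,0,1)$, we have $Q_\beta\notin\pi_\T{osc}(\infty)$; and by Lemma~\ref{lem4:cubic_equation} the point $Q_\beta$ lies on $\pi_\T{osc}(t)$ with $t\in\F_q$ precisely when $F_\beta(t)=0$. Hence the number of osculating planes through $Q_\beta$ equals the number of distinct roots of $F_\beta$ in $\F_q$. By the definition of $\V_m$ and the type criterion above, exactly $\V_2$ of the points $Q_\beta$ are $\Tr$-points and exactly $\V_1$ of them are $1_\Gamma$-points.

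Next I would treat the remaining point $Q_\infty=\Pf(0,0,1,0)$ by direct substitution. Plugging $Q_\infty$ into $\boldsymbol{\pi}(1,-3t,3t^2,-t^3)$ gives $3t^2$, which vanishes only for $t=0$ because $3\neq0$ in $\F_q$ when $q\not\equiv0\pmod3$; moreover $Q_\infty\in\pi_\T{osc}(\infty)$ since its last coordinate is $0$. Thus $Q_\infty$ lies on exactly the two osculating planes $\pi_\T{osc}(0)$ and $\pi_\T{osc}(\infty)$, and by the criterion above it is a $\Tr$-point.

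Combining the two cases, the $\Tr$-points on $\ell_\L$ are the $\V_2$ points $Q_\beta$ of that type together with $Q_\infty$, giving $\Pb_\Tr=\V_2+1$, while the $1_\Gamma$-points are exactly the $\V_1$ points $Q_\beta$ of that type (as $Q_\infty$ is not of this type), giving $\Pb_{1_\Gamma}=\V_1$. The only genuinely delicate step is the converse to Lemma~\ref{lem4:Tpoint=2Gammapoint} invoked in the type criterion; I expect this to be the main point to state carefully, but it requires no extra work since it is forced by the pairwise distinctness of the osculating-plane counts of the four off-$\C$ point types.
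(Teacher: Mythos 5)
Your proof is correct and follows essentially the same route as the paper: the cubic $F_\beta(t)$ counts the osculating planes through each affine point $Q_\beta$, Lemma~\ref{lem4:Tpoint=2Gammapoint} together with the disjointness of the point orbits converts those counts into point types, and $Q_\infty$ is handled separately as a $\Tr$-point. The only (minor) divergence is at $Q_\infty$: the paper exhibits the tangent $x_0=x_1=0$ through it by citing an external lemma, whereas you compute directly that $Q_\infty$ lies on exactly the two osculating planes $\pi_\T{osc}(0)$ and $\pi_\T{osc}(\infty)$ and apply the same type criterion, which is a slightly more self-contained variant of the same argument.
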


\begin{proof}
By Lemma \ref{lem4:cubic_equation}, if, for a fixed $\beta$,  the equation $F_\beta(t)$ \eqref{eq4:cubic_equation} has exactly $m$ distinct solutions $t$ in $\F_q$ then the point $Q_\beta$ belongs to exactly $m$ distinct osculating planes. So, the line $\ell_\L\setminus\{Q_\infty\}$ contains $\V_m$ points belonging to exactly $m$ distinct osculating planes. In particular, if $m=2$, they are $\Tr$-points, see Lemma \ref{lem4:Tpoint=2Gammapoint}.

Also, $Q_\infty$ lies on the tangent to $\C$ at the point $\Pf(0,0,0,1)$ with equations
$x_0=x_1=0$, see   \cite[Lemma 5.2]{DMP_OrbLineMedit}. So, $Q_\infty$ is a $\Tr$-point.
\end{proof}

Remind that over $\F_q$, 
the equation $x^3=c$ has a unique solution if $q\equiv-1\pmod3$ or three distinct solutions if $q\equiv1\pmod3$ \cite[Section 1.5]{Hirs_PGFF}.

For $a\in\F_q$, the \emph{quadratic character} $\eta(a)$ is equal to 1 (resp. -1) if $a$ is a square (resp. non-square) in $\F_q^*$. Also, $\eta(0)=0$.
We denote
\begin{align*}
   & \Nk_q\triangleq\#\{\beta\,|\,\eta(1+4\beta^3)=-1,~\beta\in\F_q\}.
\end{align*}

  \begin{lemma}\label{lem4:Nmq_odd}
    Let $q$ be odd. Let $q\equiv\xi\pmod3$. For the orbit $\Os_\L$, we have
   \begin{align*}
  &\textbf{\emph{(i)}} ~~\,\V_2=1,~ \V_1=(q-1)/2,\T{ if }\xi=-1;\db\\
  &\textbf{\emph{(ii)}} ~\, \V_2=0,~\V_1=\Nk_q,\T{ if }\xi=1,~2\T{ is a non-cube in }\F_q;
  \db\\
  &\textbf{\emph{(iii)}}\,\V_2=3,~\V_1=\Nk_q,\T{ if }\xi=1,~2\T{ is a cube in }\F_q.
   \end{align*}
 \end{lemma}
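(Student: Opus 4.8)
The plan is to treat $F_\beta(t)=t^3-3\beta t^2-1$ as a cubic in $t$ with parameter $\beta$, and to split the count according to whether $F_\beta$ is separable: the non-separable $\beta$ (those with a repeated root) will account for $\V_2$, while the separable ones with exactly one root account for $\V_1$, the latter being controlled by the quadratic character of the discriminant.

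First I would locate the repeated roots. Since $F_\beta'(t)=3t(t-2\beta)$ and $F_\beta(0)=-1\neq0$, the only possible multiple root is $t=2\beta$, and $F_\beta(2\beta)=-(4\beta^3+1)$. Hence $F_\beta$ has a multiple root iff $4\beta^3+1=0$, i.e. $\beta^3=-1/4$. For such $\beta$ we have $\beta\neq0$, so $F_\beta''(2\beta)=6\beta\neq0$ and $t=2\beta$ is exactly double; thus $F_\beta$ then has precisely two distinct roots, and none contributes a single root. Therefore $\V_2$ equals the number of solutions of $\beta^3=-1/4$. For $\xi=-1$ the cube map is a bijection of $\F_q$, giving $\V_2=1$; this settles the $\V_2$ part of (i). For $\xi=1$ the equation $\beta^3=-1/4$ has $0$ or $3$ solutions, with solutions existing iff $-1/4$ is a cube; as $-1=(-1)^3$ is always a cube and, in $\F_q^*/(\F_q^*)^3\cong\mathbb{Z}/3$, the class of $4=2^2$ is twice the class of $2$, this occurs iff $2$ is a cube. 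Hence $\V_2=3$ in case (iii) and $\V_2=0$ in case (ii).

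For the separable case $4\beta^3+1\neq0$ I would invoke the classical dichotomy for cubics over $\F_q$ with $q$ odd: a separable cubic has exactly one root in $\F_q$ iff its discriminant is a non-square, and has $0$ or $3$ roots iff the discriminant is a nonzero square (this follows from the fact that the Frobenius acts as a transposition on the roots precisely when the discriminant is a non-square). Depressing $F_\beta$ via $t=s+\beta$ to $s^3-3\beta^2 s-(2\beta^3+1)$ and using $-4p^3-27r^2$ with $p=-3\beta^2$, $r=-(2\beta^3+1)$ gives $\Delta=-27(4\beta^3+1)$. Since the repeated-root values of $\beta$ yield two roots, we get $\V_1=\#\{\beta : \eta(\Delta)=-1\}$.

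Finally I would factor the character as $\eta(\Delta)=\eta(-27)\,\eta(4\beta^3+1)=\eta(-3)\,\eta(1+4\beta^3)$, using $\eta(-27)=\eta(-1)\eta(3)^3=\eta(-3)$, together with the standard fact that $\eta(-3)=1$ iff $q\equiv1\pmod 3$. For $\xi=1$ we then have $\eta(\Delta)=\eta(1+4\beta^3)$, so $\V_1=\#\{\beta : \eta(1+4\beta^3)=-1\}=\Nk_q$, proving the $\V_1$ claims in (ii) and (iii). For $\xi=-1$ we have $\eta(-3)=-1$, so $\eta(\Delta)=-1$ iff $\eta(1+4\beta^3)=1$; substituting $u=\beta^3$ (a bijection of $\F_q$ since $\gcd(3,q-1)=1$) and then the affine bijection $v=4u+1$ turns the count into the number of nonzero squares of $\F_q$, namely $(q-1)/2$, completing (i). The hard part is invoking the cubic--discriminant dichotomy and then carrying the bookkeeping of the factor $\eta(-3)$ and the cube/non-cube count of $\beta^3=-1/4$ correctly; once these are set, the three cases drop out from elementary bijection and square-counting arguments.
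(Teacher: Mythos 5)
Your proposal is correct and follows essentially the same route as the paper: both split on the discriminant $\Delta=-27(1+4\beta^3)$, obtain $\V_2$ by counting cube roots of $-1/4$ (one root for $\xi=-1$, zero or three for $\xi=1$ according as $2$ is a non-cube or a cube), and obtain $\V_1$ from a quadratic-character condition on $1+4\beta^3$. The only cosmetic difference is that the paper phrases the one-root criterion via the root count of the Hessian $H(T)$ (citing Hirschfeld's Theorem 1.34), while you use the Frobenius/discriminant dichotomy directly and track the factor $\eta(-3)$; your bijective count in case (i) is in fact slightly cleaner than the paper's, and you correctly identify the double root as $2\beta$ (the paper's ``$-2\beta$'' is a typo).
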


 \begin{proof}
Let the discriminant $\Delta$ and the Hessian $H(T)$ of the equation $F_\beta(t)$  \eqref{eq4:cubic_equation} be as in \cite[Section 1.8, Lemma 1.18, Theorem 1.28]{Hirs_PGFF}.
We have
   $  \Delta=-27(1+4\beta^3),~H(T)=\beta^2T^2+T-\beta=0,~\beta\in\F_q$. The roots of $H(T)$ are $T=(-1\pm\sqrt{1+4\beta^3})/2\beta^2$.

   For the calculation of $\V_m$ we use \cite[Theorem 1.34, Table~1.3]{Hirs_PGFF}.

\textbf{(i)} Let $\Delta=0$. Then  $\beta=\sqrt[3]{-1/4}$ and  $F_\beta(t)$ has exactly two distinct roots: $-\beta$ and $-2\beta$. This implies $\V_2=1$.

Let $\Delta\ne0$. Then $\beta\ne\sqrt[3]{-1/4}$.
 If $\beta$ runs over $\F_q^*\setminus\sqrt[3]{-1/4}$ the value $1+4\beta^3$ runs over $\F_q^*\setminus\{1\}$ where there are exactly $(q-3)/2$ squares for which $H(T)$ has two solutions in $\F_q$ and $F_\beta(t)$ has one solution in $\F_q$. This gives $\V_1=(q-3)/2+1$ where ``+1'' takes into account $\beta=0$ when $F_\beta(t)$ has the unique root $t=1$.

\textbf{(ii)} As $-1/4$ is a non-cube in $\F_q$, always  $\Delta\ne0$ that implies $\V_2=0$.
If $\beta$ runs over $\F_q^*$ the value $1+4\beta^3$ takes $\Nk_q$ non-squares for which $H(T)$ has no solutions in $\F_q$ and $F_\beta(t)$ has exactly one solution in $\F_q$. Thus, $\V_1=\Nk_q$ as $F_\beta(t)$ has 3 distinct roots if $\beta=0$.

\textbf{(iii)}
Let $\Delta=0$. Then  $\beta=\sqrt[3]{-1/4}=\{\beta_1,\beta_2,\beta_3\}$. For every $i$, $F_{\beta_i}(t)$ has two distinct roots  that gives $\V_2=3$.
Let $\Delta\ne0$. Similarly to the case (ii) we obtain $\V_1=\Nk_q$.
\end{proof}

For even $q$, let $\mathrm{Tr}_2(a)$ be the absolute trace of $a\in\F_q$.  We denote
\begin{align*}
   &\Tk_q\triangleq\#\{\beta\,|\,\mathrm{Tr}_2(\beta^3)=1,\beta\in\F_q,~q=2^{2m}\}.
\end{align*}
By the context of \cite[Section 4]{CePa},
\begin{align}\label{eq4:tau}
\Tk_q=2^{2m-1}+(-2)^{m}=\frac{1}{2}q+(-1)^m\sqrt{q},~q=2^{2m}.
\end{align}

 \begin{lemma}\label{lem4:Nmq_even}
    Let $q$ be even. Let $q\equiv\xi\pmod3$. For the orbit $\Os_\L$, we have
   \begin{align*}
  &\textbf{\emph{(i)}} ~~\V_2=0,~\V_1=q/2,\T{ if }q=2^{2m+1},~\xi=-1;\db\\
  &\textbf{\emph{(ii)}} ~\V_2=0,~\V_1=\Tk_q,\T{ if }q=2^{2m},~\xi=1.
   \end{align*}
 \end{lemma}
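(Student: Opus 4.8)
The plan is to exploit the drastic simplification of $F_\beta$ in characteristic $2$ and then reduce the counts of $\V_2$ and $\V_1$ to a trace condition. In characteristic $2$ we have $3=1$ and $-1=1$, so the cubic \eqref{eq4:cubic_equation} becomes $F_\beta(t)=t^3+\beta t^2+1$. Its formal derivative is $F_\beta'(t)=t^2$, whose only root is $t=0$; since $F_\beta(0)=1\ne0$, the polynomials $F_\beta$ and $F_\beta'$ share no root, so $F_\beta$ is separable for every $\beta\in\F_q$. A separable monic cubic has three distinct roots in $\overline{\F_q}$, and if two of them lay in $\F_q$ then the third (the remaining summand of $\beta=r_1+r_2+r_3$) would lie in $\F_q$ as well. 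Hence the number of roots of $F_\beta$ in $\F_q$ is always $0$, $1$, or $3$, which gives $\V_2=0$ in both cases (i) and (ii).

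Next I would count $\V_1$ through the fibres of the map $\phi\colon\F_q^*\to\F_q$, $\phi(t)=t+t^{-2}$. Since $t=0$ is never a root and, for $t\ne0$, $F_\beta(t)=0$ is equivalent to $\beta=(t^3+1)/t^2=\phi(t)$, each $t\in\F_q^*$ is a root of exactly one $F_\beta$, and the number of roots of $F_\beta$ equals $\#\phi^{-1}(\beta)$. Summing over $\beta$ and using $\V_2=0$ yields the first relation
\begin{equation*}
\V_1+3\V_3=q-1.
\end{equation*}
For a second relation I would count the unordered pairs $\{t_1,t_2\}$ of distinct elements of $\F_q^*$ with $\phi(t_1)=\phi(t_2)$: each $\beta$ contributing to $\V_3$ yields $\binom{3}{2}=3$ such pairs and the others yield none, so the total number of coincidence pairs equals $3\V_3$.

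The heart of the argument, and the step I expect to be the main obstacle, is to evaluate this number of coincidences as a trace count. Writing $u=t_1+t_2$ and $v=t_1t_2$, a short characteristic-$2$ computation (using $t_1+t_1^{-2}-t_2-t_2^{-2}=(t_1+t_2)+\big((t_1+t_2)/(t_1t_2)\big)^2$) turns $\phi(t_1)=\phi(t_2)$, together with $t_1\ne t_2$ (so $u\ne0$), into the single condition $v^2=u$. Conversely, for $v\in\F_q^*$ put $u=v^2\ne0$; the quadratic $X^2+uX+v$ is then separable, its two roots are nonzero (their product is $v$), and they lie in $\F_q$ precisely when $\mathrm{Tr}_2(v/u^2)=\mathrm{Tr}_2(v^{-3})=0$, in which case the relation $u=v^2$ reverses to give $\phi(t_1)=\phi(t_2)$. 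This establishes a bijection between the coincidence pairs and $\{v\in\F_q^*\mid\mathrm{Tr}_2(v^{-3})=0\}$, so $3\V_3=\#\{v\in\F_q^*\mid\mathrm{Tr}_2(v^{-3})=0\}$. Combining with $\V_1+3\V_3=q-1$ and partitioning $\F_q^*$ by the value of $\mathrm{Tr}_2(v^{-3})$ gives
\begin{equation*}
\V_1=\#\{v\in\F_q^*\mid\mathrm{Tr}_2(v^{-3})=1\}=\#\{w\in\F_q^*\mid\mathrm{Tr}_2(w^3)=1\},
\end{equation*}
the last equality by the substitution $w=v^{-1}$.

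Finally I would specialize. For $\xi=1$ (so $q=2^{2m}$) the right-hand side is exactly $\Tk_q$ by its definition, which proves (ii). For $\xi=-1$ (so $q=2^{2m+1}$) we have $\gcd(3,q-1)=1$, hence $w\mapsto w^3$ permutes $\F_q^*$; the count therefore equals $\#\{x\in\F_q^*\mid\mathrm{Tr}_2(x)=1\}=q/2$, because the trace is balanced on $\F_q$ and $0$ has trace $0$. This gives $\V_1=q/2$ and proves (i). The delicate point is entirely in the third paragraph: verifying the equivalence $\phi(t_1)=\phi(t_2)\Leftrightarrow v^2=u$, the solvability criterion $\mathrm{Tr}_2(v^{-3})=0$, and that the correspondence is a genuine bijection onto pairs of distinct nonzero elements; this replaces the discriminant–Hessian computation used for odd $q$, which is unavailable in characteristic $2$.
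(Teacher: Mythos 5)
Your proof is correct, but it takes a genuinely different route from the paper's. The paper invokes Hirschfeld's Hessian classification of cubics \cite[Theorem 1.34, Table 1.3]{Hirs_PGFF}: for even $q$ the Hessian of $F_\beta$ reduces (after rescaling) to $T^2+T+\beta^3$, whose solvability is governed by $\mathrm{Tr}_2(\beta^3)$, the case distinction $q\equiv\pm1\pmod 3$ enters through the table relating the root count of the Hessian to that of the cubic, and $\V_2=0$ is read off from $\Delta\ne0$. You avoid that machinery entirely: separability of $F_\beta$ (derivative $t^2$ and $F_\beta(0)=1$) plus rationality of the third root gives $\V_2=0$ directly, and the double count over the fibres of $t\mapsto t+t^{-2}$, combined with the symmetric-function reduction of the coincidence condition to $v^2=u$ and the Artin--Schreier criterion $\mathrm{Tr}_2(v^{-3})=0$, yields the uniform formula $\V_1=\#\{w\in\F_q^*\,|\,\mathrm{Tr}_2(w^3)=1\}$, which specializes to $q/2$ or $\Tk_q$ according to whether cubing is a bijection on $\F_q^*$. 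I checked the key identities: in characteristic $2$, $t_1+t_1^{-2}+t_2+t_2^{-2}=u+(u/v)^2$, so for $u\ne0$ the coincidence condition is indeed $v^2=u$; and $v/u^2=v^{-3}$ when $u=v^2$; the bijection with pairs of distinct nonzero roots of $X^2+v^2X+v$ is sound since that quadratic is separable with nonzero root product. Your argument is self-contained and more elementary, and it makes transparent why the same trace-of-a-cube count underlies both residues of $q$ modulo $3$ (the paper reaches $\V_1$ as $\#\{\beta\,|\,\mathrm{Tr}_2(\beta^3)=0\}+1$ in case (i) and $\#\{\beta\,|\,\mathrm{Tr}_2(\beta^3)=1\}$ in case (ii), while you get the latter set uniformly); the paper's proof is shorter given the external reference. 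Both give the same numbers.
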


\begin{proof}
For even $q$, $F_\beta(t)=  t^3+\beta t^2+1=0$, $H(T)=\beta^2T^2+T+\beta=0$, $\Delta\ne0$. The replacement $T$ by $T/\beta^2$ implies $\overline{H}(T)\triangleq T^2+T+\beta^3=0$.
By \cite[Sections 1.2(iv), 1.4]{Hirs_PGFF}, $\overline{H}(T)$ has 2 (resp. 0) roots in $\F_q$ if $\mathrm{Tr}_2(\beta^3)=0$ (resp. $\mathrm{Tr}_2(\beta^3)=1$).

For the calculation of $\V_m$ we use \cite[Theorem 1.34]{Hirs_PGFF}. As $\Delta\ne0$, we have $\V_2=0.$

\textbf{(i)}
If $\beta$ runs over $\F_q^*$ then $\mathrm{Tr}_2(\beta^3)$ is equal to 0 $(q-2)/2$ times, $\overline{H}(T)$ (and hence $H(T)$) has two roots in $\F_q$, and $F_\beta(t)$ has one root in $\F_q$. So, $\V_1=(q-2)/2+1$ where ``+1'' takes into account that $F_0(t)$ has one root in $\F_q$.

\textbf{(ii)}
If $\beta$ runs over $\F_q^*$ then $\mathrm{Tr}_2(\beta^3)$ is equal to 1 $\Tk_q$ times, $H(T)$  has no roots in $\F_q$, and $F_\beta(t)$ has one root in $\F_q$. So, $\V_1=\Tk_q$ as $F_0(t)$ has 3 distinct roots in $\F_q$.
\end{proof}

\begin{lemma}\label{lem4:L=LU}
Let $q\not\equiv0\pmod3$. We have $\L=\L\A$.
\end{lemma}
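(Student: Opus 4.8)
The plan is to show that the line $\ell_\L$ generating the orbit $\Os_\L$ is fixed (as a line, not pointwise) by the null-polarity $\A$, from which $\L = \L\A$ follows immediately since $\A$ commutes with the $G_q$-action in the appropriate sense. Recall from \eqref{eq3:ell0infdef} that $\ell_\L = \overline{\Pf(1,0,0,1)\,\Pf(0,0,1,0)}$. The key computational step is to apply the explicit null-polarity formula \eqref{eq2_null_pol}, namely $\Pf(x_0,x_1,x_2,x_3)\A = \boldsymbol{\pi}(x_3,-3x_2,3x_1,-x_0)$, to two distinct points of $\ell_\L$, obtaining two planes, and then to verify that the line $\ell_\L\A$ defined as the intersection of those two planes coincides with $\ell_\L$ itself.

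Concretely, I would first compute the images of the two defining points. Taking $P_1 = \Pf(1,0,0,1)$ gives $P_1\A = \boldsymbol{\pi}(1,0,0,-1)$, the plane $x_0 - x_3 = 0$; taking $P_2 = \Pf(0,0,1,0)$ gives $P_2\A = \boldsymbol{\pi}(0,-3,0,0)$, i.e. the plane $x_1 = 0$. Then $\ell_\L\A = P_1\A \cap P_2\A$ is the set of points satisfying both $x_0 = x_3$ and $x_1 = 0$. The second step is to check that every point of $\ell_\L$ from the parametrization \eqref{eq3:ell0infdef} satisfies these two equations: the point $\Pf(0,0,1,0)$ clearly has $x_0 = x_3 = 0$ and $x_1 = 0$, and each $\Pf(1,0,\beta,1)$ has $x_0 = x_3 = 1$ and $x_1 = 0$. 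Since $\ell_\L\A$ is a line (the intersection of two distinct planes through the corresponding points) containing all points of the line $\ell_\L$, we conclude $\ell_\L\A = \ell_\L$.

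Finally, I would promote this pointwise-computed equality of a single line to the orbit-level statement. Since $\Os_\L = \L$ is the $G_q$-orbit of $\ell_\L$ and, by Theorem \ref{th2_Hirs}(iii) together with \eqref{eq2:pi(pk)}, the polarity $\A$ maps orbits to orbits compatibly with $G_q$ (indeed $\A$ intertwines the point and plane actions of $G_q$), the image $\L\A$ is again a single $G_q$-orbit of $\EnG$-lines, namely the orbit of $\ell_\L\A = \ell_\L$. Hence $\L\A = \Os_{\ell_\L\A} = \Os_{\ell_\L} = \L$.

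I do not expect any real obstacle here: the statement reduces to a short, explicit coordinate verification using \eqref{eq2_null_pol} and \eqref{eq3:ell0infdef}, and the only point requiring a word of care is the passage from the fixed representative line to the whole orbit, which is justified by the $\A$-equivariance of the $G_q$-action recorded in Theorem \ref{th2_Hirs}(iii). It is worth noting that $q \not\equiv 0 \pmod 3$ is needed precisely so that the null-polarity $\A$ is defined at all, cf. \eqref{eq2_null_pol}, consistent with the last sentence of Theorem \ref{th2_Hirs}(v) that $\O_6 = \O_6\A$ in this case.
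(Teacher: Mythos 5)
Your proposal is correct and follows essentially the same route as the paper: apply the null polarity \eqref{eq2_null_pol} to the two defining points $Q_0$ and $Q_\infty$ of $\ell_\L$ and check that both points lie in the two resulting planes, so that $\ell_\L\A=\ell_\L$. Your computation $Q_0\A=\boldsymbol{\pi}(1,0,0,-1)$ is in fact the correct sign (the paper's proof writes $\boldsymbol{\pi}(1,0,0,1)$, an evident typo), and the extra paragraph promoting the fixed representative to the whole orbit is a harmless addition to what the paper records.
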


\begin{proof}
  We have $\L\A=Q_0\A\cap Q_\infty\A=\boldsymbol{\pi}(1,0,0,1)\cap\boldsymbol{\pi}(0,-3,0,0)$. One sees that $Q_0,Q_\infty\in\boldsymbol{\pi}(1,0,0,1)$ and $Q_0,Q_\infty\in\boldsymbol{\pi}(0,-3,0,0)$; this implies $Q_0,Q_\infty\in\L\A.$
\end{proof}
For the point type $\pk\in\{\Tr,3_\Gamma,1_\Gamma,0_\Gamma\}$,
let $\pi(\pk)$ be the plane type such that $\M_\pk\A=\N_{\pi(\pk)}$ in accordance to \eqref{eq2:pi(pk)}. We have
\begin{align}\label{eq4:pipk}
 \pi(\Tr)=2_\C,~\pi(3_\Gamma)=3_\C,~\pi(1_\Gamma)=\overline{1_\C},~\pi(0_\Gamma)=0_\C.
\end{align}
\begin{theorem}
For the submatrices of the point-line and plane-line incidence matrices with respect to the orbit $\Os_\L$, the values $\Pb_{\pk}$,
$\Pi_{\pi}$, $\Lb_{\pk}$, and $\Lambda_{\pi}$ (see Notation \emph{\ref{notation_2}}), are as in Table~\emph{\ref{tabO_L}} and \eqref{eq3:=0}. In that, $\Pb_{\pk}=\Pi_{\pi(\pk)}$  and $\Lb_{\pk}=\Lambda_{\pi(\pk)}$ with $\pi(\pk)$ in accordance to \eqref{eq4:pipk}. The sizes of orbits $\#\M_\pk=\#\N_{\pi(\pk)}$ are noted in the top of columns.
\end{theorem}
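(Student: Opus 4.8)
The plan is to reduce the entire table to the four point-counts $\Pb_\Tr$, $\Pb_{1_\Gamma}$, $\Pb_{3_\Gamma}$, $\Pb_{0_\Gamma}$ and then to read off the plane-counts and the dual counts almost for free from the self-polarity of $\ell_\L$. The key structural input is Lemma~\ref{lem4:L=LU}, which gives $\widetilde{\ell_\L}=\ell_\L\A=\ell_\L$. Substituting $\ell=\ell_\L$ into Lemma~\ref{lem3:UPiPk} and using $\widetilde{\ell_\L}=\ell_\L$ turns each relation $\Pi_{\pi}(\widetilde{\ell})=\Pb_{\pk}(\ell)$ into $\Pi_{\pi}(\ell_\L)=\Pb_{\pk}(\ell_\L)$ with $\pi=\pi(\pk)$ as in \eqref{eq4:pipk}; since all these numbers are orbit-invariant by Lemma~\ref{lem3:orb the same}, this yields $\Pb_{\pk}=\Pi_{\pi(\pk)}$ for the orbit $\Os_\L$ (the instance $\Pi_{2_\C}=\Pb_\Tr$ being already guaranteed by \eqref{eq3:2C=t}). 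The dual identity $\Lb_{\pk}=\Lambda_{\pi(\pk)}$ then follows at once from \eqref{eq3:obtainPi1} and \eqref{eq3:obtainLamb1}: because $\#\M_\pk=\#\N_{\pi(\pk)}$ by Theorem~\ref{th2_Hirs}(iii), we get $\Lb_\pk=\Pb_\pk\#\Os_\L/\#\M_\pk=\Pi_{\pi(\pk)}\#\Os_\L/\#\N_{\pi(\pk)}=\Lambda_{\pi(\pk)}$. Together with \eqref{eq3:=0} this already accounts for the two asserted equalities and for the vanishing entries.

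It remains to compute the four point-counts explicitly in each case. For the two primary counts I would invoke Lemma~\ref{lem4:Nm&Pb}, namely $\Pb_\Tr=\V_2+1$ and $\Pb_{1_\Gamma}=\V_1$, where $\V_1,\V_2$ count the admissible $\beta$ by the number of roots of the cubic $F_\beta(t)$ of \eqref{eq4:cubic_equation}. The values of $\V_1,\V_2$ are supplied by Lemma~\ref{lem4:Nmq_odd} for odd $q$ (split by $\xi$ and by whether $2$ is a cube, and expressed through $\Nk_q$) and by Lemma~\ref{lem4:Nmq_even} for even $q$ (expressed through $q/2$ or $\Tk_q$). The two remaining counts $\Pb_{3_\Gamma}$ and $\Pb_{0_\Gamma}$ are then forced by the linear relations \eqref{eq3:Pi1Pi2Pi3c} and \eqref{eq3:Pi1Pi2Pi3d}: solving $\Pb_{1_\Gamma}+2\Pb_\Tr+3\Pb_{3_\Gamma}=q+1$ gives $\Pb_{3_\Gamma}=(q+1-\Pb_{1_\Gamma}-2\Pb_\Tr)/3$, and then $\Pb_{0_\Gamma}=\Pb_\Tr+2\Pb_{3_\Gamma}$. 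Substituting the case-by-case values of $\V_1,\V_2$ yields all four $\Pb_\pk$.

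With the four point-counts in hand, the plane-counts $\Pi_\pi$ are obtained immediately from $\Pi_{\pi(\pk)}=\Pb_\pk$, while the line-counts $\Lambda_\pi$ and $\Lb_\pk$ follow from the orbit-size ratios \eqref{eq3:obtainLamb1}, using the orbit sizes $\#\N_\pi$, $\#\M_\pk$ from Theorem~\ref{th2_Hirs}(ii)(iii) and $\#\Os_\L$ from \eqref{eq2:orbit0inf}. Here one must split along exactly the four branches of \eqref{eq2:orbit0inf} (namely $\xi=1$ with $q$ even or $2$ a non-cube; $\xi=1$ odd with $2$ a cube; $\xi=-1$ even; $\xi=-1$ odd), matching each to the corresponding branch of Lemmas~\ref{lem4:Nmq_odd} and \ref{lem4:Nmq_even}, and record the resulting numbers in Table~\ref{tabO_L}.

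The main obstacle I anticipate is bookkeeping rather than conceptual: one must keep the three dichotomies — $\xi=\pm1$, the parity of $q$, and the cube/non-cube (or trace) status controlling $\V_1,\V_2$ — aligned with the matching branch of \eqref{eq2:orbit0inf}, and verify in each branch that the division by $3$ producing $\Pb_{3_\Gamma}$ and the divisions by $\#\N_\pi$ and $\#\M_\pk$ in the $\Lambda$- and $\Lb$-ratios yield nonnegative integers. Checking this integrality, together with the consistency of $\Nk_q$ and $\Tk_q$ with the known orbit sizes, is the one place where a genuine if elementary verification is needed; everything else is a direct consequence of the self-polarity $\widetilde{\ell_\L}=\ell_\L$ and the already-established linear relations.
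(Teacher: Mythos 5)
Your proposal is correct and follows essentially the same route as the paper: $\Pb_\Tr$ and $\Pb_{1_\Gamma}$ from Lemmas~\ref{lem4:Nm&Pb}--\ref{lem4:Nmq_even}, the remaining point-counts from the linear relations \eqref{eq3:Pi1Pi2Pi3c}--\eqref{eq3:Pi1Pi2Pi3d}, the $\Lb_\pk$ from \eqref{eq3:obtainLamb1}, and the identifications $\Pb_{\pk}=\Pi_{\pi(\pk)}$, $\Lb_{\pk}=\Lambda_{\pi(\pk)}$ via the self-polarity $\ell_\L\A=\ell_\L$ (Lemma~\ref{lem4:L=LU}) combined with Lemma~\ref{lem3:UPiPk} and Theorem~\ref{th2_Hirs}(iii). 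Your write-up is merely more explicit about the case alignment with the branches of \eqref{eq2:orbit0inf} and the integrality checks, which the paper leaves implicit.
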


\begin{proof}
  We obtain the values of $\Pb_\Tr$ and $\Pb_{1_\Gamma}$ using \eqref{eq4:tau} and Lemmas \ref{lem4:Nm&Pb}--\ref{lem4:Nmq_even}. Then we calculate $\Pb_{3_\Gamma}$ by \eqref{eq3:Pi1Pi2Pi3c}, $\Pb_{0_\Gamma}$ by \eqref{eq3:Pi1Pi2Pi3d}, and all $\Lb_\pk$ by \eqref{eq3:obtainLamb1}. To prove  $\Pb_{\pk}=\Pi_{\pi(\pk)}$  and $\Lb_{\pk}=\Lambda_{\pi(\pk)}$ , we use Theorem \ref{th2_Hirs}(iii) and Lemmas~\ref{lem3:UPiPk}, \ref{lem4:L=LU}.
\end{proof}

\begin{table}[h]
\caption{Values $\Pb_{\pk}=\Pi_{\pi(\pk)}$ (the  number of $\pk$-points on a line from  $\Os_\L$=the  number of $\pi(\pk)$-planes through a line from $\Os_\L$, top entry)
and $\Lb_{\pk}=\Lambda_{\pi(\pk)}$ (the number of lines from  $\Os_\L$ through a $\pk$-point=the number of lines from  $\Os_\L$ in a $\pi(\pk)$-plane, bottom entry) for the point-line and plane-line incidence submatrices regarding the orbit $\Os_{\L}$; $q\equiv\xi\pmod3$,\newline
$\blacktriangledown$ means ``2 is a non-cube'', $\blacktriangle$ notes ``2 is a cube'', $\pi(\pk)$ see in \eqref{eq4:pipk}}
\centering
\begin{tabular}{ccccc}\hline
$\#\M_\pk\rightarrow$&$q^2+q$&$\frac{1}{6}(q^3-q)$&$ \frac{1}{2}(q^3-q)$&$\frac{1}{3}(q^3-q)$\\
$q$&$\Pb_\Tr=\Pi_{2_\C}$&$\Pb_{3_\Gamma}=\Pi_{3_\C}$&$\Pb_{1_\Gamma}=\Pi_{\overline{1_\C}}$&$\Pb_{0_\Gamma}=\Pi_{0_\C}$\\
$\xi$&$\Lb_\Tr=\Lambda_{2_\C}$&$\Lb_{3_\Gamma}=\Lambda_{3_\C}$&$\Lb_{1_\Gamma}=\Lambda_{\overline{1_\C}}$&$\Lb_{0_\Gamma}=\Lambda_{0_\C}$\\
$\#\Os_{\L}$\\\hline

odd $q$ &$2$&$ \frac{1}{6}(q-5)$&$ \frac{1}{2}(q-1)$&$ \frac{1}{3}(q+1)$\\
$\xi=-1$&$q-1$&$\frac{1}{2}(q-5)$&$\frac{1}{2}(q-1)$&$\frac{1}{2}(q+1)$\\
$\frac{1}{2}(q^3-q)$&\\

$q=2^{2m+1}$ &$1$&$ \frac{1}{6}(q-2)$&$ \frac{1}{2}q$&$ \frac{1}{3}(q+1)$\\
$\xi=-1$&$q-1$&$q-2$&$ q$&$ q+1$\\
$q^3-q$&\\

odd $q$,\,$\blacktriangledown$ &$1$&$ \frac{1}{3}(q-1-\Nk_q)$&$ \Nk_q$&$\frac{1}{3}(2q+1-2\Nk_q)$\\
$\xi=1$&$\frac{1}{3}(q-1)$&$\frac{2}{3}(q-1-\Nk_q)$&$\frac{2}{3}\Nk_q$&$\frac{1}{3}(2q+1-2\Nk_q)$\\
$\frac{1}{3}(q^3-q)$&\\

odd $q$,\,$\blacktriangle$ &$4$&$ \frac{1}{3}(q-7-\Nk_q)$&$ \Nk_q$&$\frac{2}{3}(q-1-\Nk_q)$\\
$\xi=1$&$\frac{1}{3}(q-1)$&$\frac{1}{6}(q-7-\Nk_q)$&$\frac{1}{6}\Nk_q$&$\frac{1}{6}(q-1-\Nk_q)$\\
$\frac{1}{12}(q^3-q)$&\\

$q=2^{2m}$ &$1$&$ \frac{1}{3}(\frac{1}{2}q-1-(-2)^m)$&$ \frac{1}{2}q+(-2)^m$&$\frac{1}{3}(q+1+(-2)^{m+1})$\\
$\xi=1$&$\frac{1}{3}(q-1)$&$\frac{2}{3}(\frac{1}{2}q-1-(-2)^m)$&$\frac{2}{3}(\frac{1}{2}q+(-2)^m)$&$\frac{1}{3}(q+1+(-2)^{m+1})$\\
$\frac{1}{3}(q^3-q)$&\\\hline
\end{tabular}
\label{tabO_L}
\end{table}

\begin{example}\label{ex5:odd}
Let $q\equiv1\pmod3$. Let $q$ be odd.  By computer search, using the system MAGMA \cite{Magma}, we obtained Table \ref{tab5.1} where one can do the following observations. We denote $\delta_q=2\Nk_q-q$. In Table \ref{tab5.1}, $\delta_q$ is odd, $\delta_q\equiv-1\pmod3$. Let  $q'\equiv q''\pmod9$; for both $q'$ and $q''$ 2 is a cube or a non-cube. Then  $\delta_q'\equiv\delta_q''\pmod9$.

In the bottom part of Table \ref{tab5.1} (where $2$ is a cube in $\F_q$) all the values of $\Nk_q$ are even providing integer values in  Table~\ref{tabO_L}.

\begin{table}[h]
\centering
\caption{The values $\Nk_q$, odd $q\equiv1\pmod3$, $7\le q\le907$}
$
\begin{array}
{c|cccccccccccc}\hline
\multicolumn{13}{c}{\T{$2$ is a non-cube in $\F_q$}}\\\hline
q&7&61&169&331&547&19&73&181&829&49&103&211\\
2\Nk_q-q&-1&-1&-1&-1&-1&-7&-7&-7&-7&-13&-13&-13\\\hline

q&373&859&97&151&421&907&163&487&409&571&787&349\\
2\Nk_q-q&-13&-13&-19&-19&-19&-19&-25&-25&-31&-31&-31&-37\\\hline

q&673&523&631&607&661&769&13&67&337&823&37&199\\
2\Nk_q-q&-37&-43&-43&-49&-49&-49&5&5&5&5&11&11\\\hline

q&361&577&79&241&619&139&193&463&271&379&541&757\\
2\Nk_q-q&11&11&17&17&17&23&23&23 &29&29&29&29\\\hline

q&313&367&853&751 &613&883&709&877\\
2\Nk_q-q&35&35&35&41&47&47&53&59\\\hline

\multicolumn{13}{c}{\T{$2$ is a cube in $\F_q$}}\\\hline
q&109&433&31&25&457&307&739&121&229&223&439&289\\
2\Nk_q-q&-1&-1&-7&-13&-13&-19&-19&-25&-25&-31&-31&-37\\\hline

q&397&643&529&841&43&691&157&127& 343&277&601&283\\
2\Nk_q-q&-37&-43&-49&-61&5&5&11&17&17&23&23&29\\\hline

q&499&727&625&733&811\\
2\Nk_q-q&29&41&47&47&53\\\hline
\end{array}
$
\label{tab5.1}
\end{table}
\end{example}
\section{Connections of lines $\ell_\mu$  with $\Gamma$-, $2_\C$-, and  $\overline{1_\C}$-planes and tangents to cubic $\C$ }
\label{sec:lmuW&intersec}
 Let the line $\ell_\mu$ be as in Section \ref{subsubsece:lmu}, in particular, $\mu\in\F_q^*\setminus\{1,1/9\}$ that for even $q$ and $q\equiv0\pmod3$ naturally reduces to $\mu\in\F_q^*\setminus\{1\}$.

\subsection{Intersections of lines $\ell_\mu$  with osculating planes, $q\not\equiv0\pmod3$}
 \begin{lemma}\label{lem5:R&RWmuinf}
Let $q$ be odd, $q\not\equiv0\pmod3$. Then the point $R_{\mu,\infty}=\Pf(1,0,1,0)$ is a $1_\Gamma$-point if  $q\equiv-1\pmod3$ and a $3_\Gamma$-point if $q\equiv1\pmod3$.
\end{lemma}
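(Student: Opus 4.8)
The plan is to determine the type of the point $R_{\mu,\infty}=\Pf(1,0,1,0)$ by counting how many osculating planes of $\C$ pass through it, since the point types $1_\Gamma$ and $3_\Gamma$ are distinguished precisely by lying on exactly $1$ or exactly $3$ osculating planes (see Notation~\ref{notation_1}). First I would substitute the coordinates of $R_{\mu,\infty}$ into the osculating-plane equation. By \eqref{eq2_osc_plane}, the osculating plane $\pi_\T{osc}(t)=\boldsymbol{\pi}(1,-3t,3t^2,-t^3)$ for $t\in\F_q$ passes through $\Pf(x_0,x_1,x_2,x_3)$ iff $x_0-3tx_1+3t^2x_2-t^3x_3=0$. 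Plugging in $(x_0,x_1,x_2,x_3)=(1,0,1,0)$ collapses this to the equation $3t^2+1=0$, i.e. $t^2=-1/3$; I must also separately check whether $\pi_\T{osc}(\infty)=\boldsymbol{\pi}(0,0,0,1)$ contains $R_{\mu,\infty}$, which it does not since its last coordinate is $0$.

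The crux is then counting solutions $t\in\F_q$ of $t^2=-1/3$, which is governed by the quadratic character $\eta(-1/3)$. The number of solutions is $2$ if $-1/3$ is a nonzero square, $0$ if it is a non-square, and I should confirm $-1/3\ne0$ so the degenerate single-root case does not arise (it cannot, as $q\not\equiv0\pmod3$ makes $3$ invertible and $-1/3$ nonzero). Thus the point lies on exactly $2$ osculating planes when $-1/3$ is a square and on $0$ osculating planes otherwise. Next I would translate "$0$ or $2$ osculating planes" into the point-type classification. Since $R_{\mu,\infty}$ is off $\C$ (its coordinates do not match $P(t)$ for any $t$), and since Theorem~\ref{th2_Hirs}(iii) tells us that for $q\equiv1\pmod3$ the $1_\Gamma$-points are the $\IC$-points and the $\RC$-points split as $\M_{3_\Gamma}\cup\M_{0_\Gamma}$, while for $q\equiv-1\pmod3$ the roles swap, I would use the established correspondence (Lemma~\ref{lem4:Tpoint=2Gammapoint} shows a point on exactly two osculating planes is a $\Tr$-point in the chord terminology) together with the $\mu_\Gamma$ labeling to read off the type. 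In fact the cleanest route is to note directly that a point off $\C$ lying on exactly one osculating plane is by definition a $1_\Gamma$-point and a point on exactly three is a $3_\Gamma$-point, so I only need to reconcile the count I obtained.

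The main obstacle is that the naive count gives $0$ or $2$ osculating planes, whereas the target types $1_\Gamma$ and $3_\Gamma$ correspond to $1$ and $3$ planes; so $R_{\mu,\infty}$ cannot be an off-curve point on the cubic developable in the straightforward real sense, and I must account for the osculating developable $\Gamma$ being a \emph{cubic developable} (for $q\not\equiv0\pmod3$) whose tangent lines and imaginary osculating planes contribute to the count over the algebraic closure. The resolution is to recall that the relevant enumeration counts osculating planes over $\F_q$ and that the discrepancy between "$2$ real solutions of a quadratic" and "odd number of osculating planes" is reconciled by the third osculating plane being either real or a tangency; concretely, I expect that $-1/3$ is a square in $\F_q$ exactly when $q\equiv1\pmod3$ and a non-square exactly when $q\equiv-1\pmod3$, which is the standard evaluation $\eta(-3)=\eta(-1)\eta(3)$ via quadratic reciprocity for odd $q$. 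Establishing this character evaluation and correctly matching the solution count to the $\{0_\Gamma,1_\Gamma,3_\Gamma\}$ classification is where the real care is needed; once $\eta(-1/3)$ is pinned down as $+1$ for $q\equiv1\pmod3$ and $-1$ for $q\equiv-1\pmod3$, the two stated conclusions follow immediately.
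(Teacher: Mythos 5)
There is a genuine error at the very first step, and it derails the rest of the argument. You assert that $\pi_\T{osc}(\infty)=\boldsymbol{\pi}(0,0,0,1)$ does \emph{not} contain $R_{\mu,\infty}=\Pf(1,0,1,0)$ ``since its last coordinate is $0$''. This is backwards: $\boldsymbol{\pi}(0,0,0,1)$ is the plane $x_3=0$, and the point's last coordinate being $0$ means it \emph{satisfies} that equation, so $R_{\mu,\infty}\in\pi_\T{osc}(\infty)$ for all $q$. (Theorem~\ref{th2:orbitellmu}(i) only excludes $R_{\mu,\gamma}$ with $\gamma\in\F_q$ from $\pi_\T{osc}(\infty)$, not $\gamma=\infty$.) The paper's proof consists of exactly this observation plus the quadratic count you carried out: the condition $R_{\mu,\infty}\in\pi_\T{osc}(t)$ for $t\in\F_q$ reduces to $t^2=-1/3$, which has $2$ solutions when $q\equiv1\pmod3$ (since $-3$ is then a square) and $0$ solutions when $q\equiv-1\pmod3$. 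Adding the one plane at $t=\infty$ gives totals of $3$ and $1$, matching the claimed types $3_\Gamma$ and $1_\Gamma$.

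Because you dropped the plane at infinity, your totals are $2$ and $0$, which would make the point a $\Tr$-point (by Lemma~\ref{lem4:Tpoint=2Gammapoint} and the classification) or a $0_\Gamma$-point, contradicting the statement. Your final paragraph recognizes the mismatch but tries to resolve it by appealing to the cubic developable and ``imaginary osculating planes contributing over the algebraic closure''; that is not the resolution (the count is over $\F_q$ throughout), and no such correction term exists. The character computation $\eta(-1/3)=\eta(-3)=+1$ iff $q\equiv1\pmod3$ is fine; the only fix needed is to include $\pi_\T{osc}(\infty)$ in the count, after which the argument coincides with the paper's.
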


\begin{proof}
   We have $R_{\mu,\infty}\in\pi_\T{osc}(\infty)$ for all $q$, see \eqref{eq2_osc_plane}.
If  $q\equiv-1\pmod3$ then $R_{\mu,\infty}\notin\pi_\T{osc}(t)$ with $t\in\F_q$. If
 $q\equiv1\pmod3$ then $R_{\mu,\infty}\in\pi_\T{osc}(\pm\sqrt{-1/3})$.
\end{proof}

\begin{lemma}\label{lem5:cubic_equation}
Let $q\not\equiv0\pmod3$. Let $\gamma,t\in\F_q$. Let the point $R_{\mu,\gamma}=\Pf(\gamma,\mu,\gamma,1)$ belong to the osculating plane $\pi_\T{osc}(t)$. Then the values of $\mu,\gamma,$ and $t$ satisfy the cubic equation
\begin{align}\label{eq5:cubEq}
&  \Phi_{\mu,\gamma}(t)=t^3-3\gamma t^2+3\mu t-\gamma=0,~\gamma\in\F_q,~ \mu\in\F_q^*\setminus\{1,\frac{1}{9}\}, ~q\not\equiv0\pmod3.
\end{align}
\end{lemma}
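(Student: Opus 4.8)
The plan is to verify directly that membership of the point $R_{\mu,\gamma}=\Pf(\gamma,\mu,\gamma,1)$ in the osculating plane $\pi_\T{osc}(t)$ is equivalent to the stated cubic equation \eqref{eq5:cubEq}. The tool is simply the plane equation \eqref{eq2_plane} together with the explicit coordinate vector of the osculating plane from \eqref{eq2_osc_plane}, exactly as was done for the analogous line $\ell_\L$ in Lemma \ref{lem4:cubic_equation}.

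First I would recall from \eqref{eq2_osc_plane} that for $t\in\F_q$ the osculating plane is $\pi_\T{osc}(t)=\boldsymbol{\pi}(1,-3t,3t^2,-t^3)$, so that by \eqref{eq2_plane} a point $\Pf(x_0,x_1,x_2,x_3)$ lies in $\pi_\T{osc}(t)$ precisely when
\begin{equation*}
x_0-3t\,x_1+3t^2\,x_2-t^3\,x_3=0.
\end{equation*}
Next I would substitute the coordinates of $R_{\mu,\gamma}$, namely $x_0=\gamma$, $x_1=\mu$, $x_2=\gamma$, $x_3=1$, into this relation. This yields
\begin{equation*}
\gamma-3t\mu+3t^2\gamma-t^3=0.
\end{equation*}
Multiplying by $-1$ and reordering by decreasing powers of $t$ gives $t^3-3\gamma t^2+3\mu t-\gamma=0$, which is exactly $\Phi_{\mu,\gamma}(t)=0$ as claimed in \eqref{eq5:cubEq}. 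The stated ranges $\gamma\in\F_q$, $\mu\in\F_q^*\setminus\{1,\tfrac{1}{9}\}$, and $q\not\equiv0\pmod3$ are simply the standing hypotheses of Section \ref{sec:lmuW&intersec} and Theorem \ref{th2:orbitellmu}, carried along rather than used in the derivation.

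There is no real obstacle here: the proof is a one-line substitution, and the only mild care needed is to confirm that the point coordinates being substituted are the correct ones for a general finite point $R_{\mu,\gamma}$ with $\gamma\in\F_q$ (the cases $\gamma=0$ and $\gamma=\infty$ are the special points $R_{\mu,0}$ and $R_{\mu,\infty}$ already treated separately, the latter in Lemma \ref{lem5:R&RWmuinf}). Thus, as with Lemma \ref{lem4:cubic_equation}, the proof amounts to quoting \eqref{eq2_osc_plane} and performing the substitution, and I would expect the author's proof to be correspondingly brief.
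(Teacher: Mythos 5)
Your proposal is correct and matches the paper's proof, which likewise just quotes $\pi_\T{osc}(t)=\boldsymbol{\pi}(1,-3t,3t^2,-t^3)$ from \eqref{eq2_osc_plane} and leaves the substitution of $R_{\mu,\gamma}=\Pf(\gamma,\mu,\gamma,1)$ into the plane equation implicit. Your explicit computation $\gamma-3t\mu+3t^2\gamma-t^3=0$, rewritten as $t^3-3\gamma t^2+3\mu t-\gamma=0$, is exactly the intended one-line argument.
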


\begin{proof}
 We have $\pi_\T{osc}(t)=\boldsymbol{\pi}(1,-3t,3t^2,-t^3)$, $t\in\F_q$, that implies the assertions.
   \end{proof}

   We denote $\Nb_1(\mu)$ the number of $\gamma\in\F_q,$ such that the equation $\Phi_{\mu,\gamma}(t)$  \eqref{eq5:cubEq} has exactly one solution $t$ in $\F_q$.

\subsection{Intersections  of lines $\ell_\mu$  with tangents to  cubic $\C$; lines $\ell_\mu$ and $2_\C$-planes}
The coordinate vector $L_\mu$ of $\ell_{\mu}$  is
$ L_\mu = (\mu,0,1,-\mu,0,1)$.
By \cite[Section 15.2]{Hirs_PG3q}, \eqref{eq2:cvTang},  the mutual invariant of $\ell_\mu$ and a tangent $\TT_t$ is:
\begin{align}\label{eq5:mutInvarW}
  &\varpi(\ell_\mu,\TT_t)=t^4-(3\mu-1) t^2+\mu,~t\in\F_q;~\varpi(\ell_\mu,\TT_\infty)=1\ne0.
\end{align}
Two lines   intersect if and only if their mutual invariant  is equal to zero. Thus, $\ell_\mu$  does not intersect $\TT_\infty$ for any $\mu$; we may consider only intersections  with $\TT_t$ for $t\in\F_q$.

Let $\mathfrak{n}_{q}(\mu)$ be
 the number of solutions in  $\F_q$ of the equation
 \begin{align}\label{eq5:eqtang}
 \varpi(\ell_\mu,\TT_t)=  t^4-(3\mu-1) t^2+\mu=0,~t\in\F_q.
 \end{align}
Useful results on $\mathfrak{n}_{q}(\mu)$ can be found in \cite[Section 1.11]{Hirs_PGFF}.
 By above, in particular by Lemma \ref{lem3:tang&2cplane}, Propostion \ref{prop3:PiG=LambG=PbC=LbC=0}, and \eqref{eq3:2C=t}, the following lemma holds.
 \begin{lemma}\label{lem5:Tpoints}
 Both the number of $\Tr$-points on an $\ell_\mu$-line and the number of $2_\C$-planes containing the line are equal to $\mathfrak{n}_{q}(\mu)$.
 \end{lemma}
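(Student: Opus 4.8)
The plan is to establish Lemma \ref{lem5:Tpoints} by showing that the two quantities in question---the number of $\Tr$-points on $\ell_\mu$ and the number of $2_\C$-planes containing $\ell_\mu$---both coincide with $\mathfrak{n}_q(\mu)$, the number of solutions in $\F_q$ of the quartic \eqref{eq5:eqtang}. The key conceptual point already assembled in the preceding material is that intersections of $\ell_\mu$ with tangents to $\C$ are governed precisely by the vanishing of the mutual invariant $\varpi(\ell_\mu,\TT_t)$, and that these tangent-intersection points are exactly the $\Tr$-points on the line.

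First I would recall that two lines of $\PG(3,q)$ meet if and only if their mutual invariant vanishes, so the tangents $\TT_t$ meeting $\ell_\mu$ are in bijection with the roots $t\in\F_q$ of \eqref{eq5:eqtang}; since $\varpi(\ell_\mu,\TT_\infty)=1\ne 0$ by \eqref{eq5:mutInvarW}, the tangent $\TT_\infty$ never contributes, and the count of relevant tangents is exactly $\mathfrak{n}_q(\mu)$. Next I would invoke Lemma \ref{lem3:tang&2cplane}(ii): among $\EnG$-lines, only lines lying in $2_\C$-planes meet tangents, and such an intersection point is a $\Tr$-point (for $q\not\equiv 0\pmod 3$) or a $\TO$-point (for $q\equiv 0\pmod 3$). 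By Theorem \ref{th2:orbitellmu}(ii) the line $\ell_\mu$ is an $\EnG$-line for all $q\ge 5$, so this dichotomy applies and each tangent meeting $\ell_\mu$ does so in exactly one $\Tr$- (or $\TO$-) point on the line. The remaining subtlety is that distinct tangents meet $\ell_\mu$ in distinct points---two tangents through a common point of $\ell_\mu$ would force that point onto two tangents, which cannot happen for a point off $\C$ in the relevant type---so the roots of the quartic are in bijection with the tangent-intersection points, giving $\Pb_\Tr = \mathfrak{n}_q(\mu)$.

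Finally I would identify the count of $2_\C$-planes with the same number using Proposition \ref{prop3:PiG=LambG=PbC=LbC=0}: relation \eqref{eq3:2C=t} states $\Pb_\Tr=\Pi_{2_\C}$ when $q\not\equiv 0\pmod 3$ and $\Pb_{\TO}=\Pi_{2_\C}$ when $q\equiv 0\pmod 3$. Chaining this with the tangent count yields $\Pi_{2_\C}=\mathfrak{n}_q(\mu)$ as well, which is precisely the claim. The proof of Lemma \ref{lem3:tang&2cplane}(iii) (via \cite[Proposition 5.6]{DMP_PlLineIncJG}) already guarantees that the $2_\C$-planes through $\ell_\mu$ are exhausted by those associated with the intersecting tangents, so no $2_\C$-plane is over- or under-counted.

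The step I expect to be the main obstacle is not the algebra but the bookkeeping of the bijection: one must be careful that each root $t$ of \eqref{eq5:eqtang} yields a genuinely distinct $\Tr$-point (equivalently a distinct $2_\C$-plane), with no coincidences or hidden multiplicities, and that the characteristic-$3$ case ($q\equiv 0\pmod 3$, where $\TO$-points replace $\Tr$-points) is handled uniformly. Since all the structural facts needed---that $\ell_\mu$ is external, that tangent-meetings occur only inside $2_\C$-planes, and the polarity-free relation $\Pb_\Tr=\Pi_{2_\C}$---are already in place, the lemma follows essentially by assembling Lemma \ref{lem3:tang&2cplane}, Proposition \ref{prop3:PiG=LambG=PbC=LbC=0}, and \eqref{eq3:2C=t}, exactly as the excerpt indicates.
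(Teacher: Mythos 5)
Your proposal is correct and takes essentially the same route as the paper, which obtains the lemma directly from the vanishing of the mutual invariant \eqref{eq5:eqtang} combined with Lemma \ref{lem3:tang&2cplane}, Proposition \ref{prop3:PiG=LambG=PbC=LbC=0}, and \eqref{eq3:2C=t}. You merely make explicit the bookkeeping the paper leaves implicit (that $\TT_\infty$ contributes nothing and that distinct tangents meet $\ell_\mu$ in distinct $\Tr$-points, the latter holding because each point off $\C$ lies on at most one tangent).
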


\subsection{Line $\ell_\mu$ and $\overline{1_\C}$-planes}
 We consider the cubic equation regarding $t$.
\begin{align}\label{eq5:cub eq}
 \widetilde{\Phi}_{\mu,c}(t)\triangleq t^3+ct^2-t-\mu c=0,~t\in\F_q,~c\in\F_q^*,~\mu\T{ is as in }\eqref{eq2:mu}.
\end{align}
 We denote $\widetilde{\Nb}_1(\mu)$  the number of $c\in\F_q^*$ such that the equation $\widetilde{\Phi}_{\mu,c}(t)$ \eqref{eq5:cub eq}  has exactly one solution $t$ in $\F_q$.

\begin{lemma}\label{lem5:1Cplanes}
The number of\/ $\overline{1_\C}$-planes containing a line $\ell_\mu$ is equal to $\widetilde{\Nb}_1(\mu)$ if $\mu$ is a square in $\F_q$ and
$\widetilde{\Nb}_1(\mu)+1 $ otherwise.
\end{lemma}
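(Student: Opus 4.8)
The plan is to establish a bijection between the $\overline{1_\C}$-planes through $\ell_\mu$ and an appropriate set of solutions of the cubic equation $\widetilde{\Phi}_{\mu,c}(t)$ in \eqref{eq5:cub eq}, exactly parallel to what was done for osculating planes in Lemma~\ref{lem5:cubic_equation}. First I would recall that by Theorem~\ref{th2_Hirs} a $\overline{1_\C}$-plane is a plane not in $\Gamma$ that meets $\C$ in exactly one point $P(t)$. Such a plane is determined by the pair consisting of the contact point $P(t)$ on $\C$ and a second parameter tracking the remaining degree of freedom; I expect the parameter $c\in\F_q^*$ in \eqref{eq5:cub eq} to encode precisely this second degree of freedom for planes passing through $\ell_\mu$. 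So the first concrete step is to write down the general plane $\boldsymbol{\pi}(c_0,c_1,c_2,c_3)$ through $\ell_\mu$ using the two points $R_{\mu,0}=\Pf(0,\mu,0,1)$ and $R_{\mu,\infty}=\Pf(1,0,1,0)$ spanning $\ell_\mu$: imposing that both points satisfy the plane equation \eqref{eq2_plane} gives two linear constraints on $(c_0,c_1,c_2,c_3)$, reducing the pencil of planes through $\ell_\mu$ to a one-parameter family.

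Next I would intersect this one-parameter family of planes with the cubic $\C$. A plane $\boldsymbol{\pi}(c_0,c_1,c_2,c_3)$ meets $\C$ at points $P(t)=\Pf(t^3,t^2,t,1)$ where $c_0t^3+c_1t^2+c_2t+c_3=0$, a cubic in $t$. After substituting the linear constraints coming from $\ell_\mu$ and introducing the free parameter as $c$, the plan is to show that this cubic becomes exactly $\widetilde{\Phi}_{\mu,c}(t)=t^3+ct^2-t-\mu c$ (up to a nonzero scalar), which is the content that makes \eqref{eq5:cub eq} the natural object. The requirement that the plane be a $\overline{1_\C}$-plane translates to the cubic having exactly one root $t\in\F_q$, so the count of such planes is $\widetilde{\Nb}_1(\mu)$ by definition, provided each valid $c$ yields a genuinely distinct plane and no plane is an osculating ($\Gamma$-)plane or counted twice.

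The correction term ``$+1$ when $\mu$ is not a square'' is where the real subtlety lies, and I expect this to be the main obstacle. The issue is the behavior at the point $R_{\mu,\infty}$ (the contribution corresponding to $t=\infty$ or to a degenerate/boundary value of $c$, namely the case excluded by $c\in\F_q^*$). One must carefully account for the plane $\boldsymbol{\pi}(0,0,0,1)=\pi_\T{osc}(\infty)$ and, more importantly, for whether the ``missing'' plane at the boundary of the $c$-parametrization is itself a $\overline{1_\C}$-plane. I would analyze the limiting case $c\to 0$ (or $c=\infty$) separately: here the cubic $\widetilde{\Phi}_{\mu,c}(t)$ degenerates and the corresponding plane touches $\C$ in a way governed by the factorization of $t^2-\mu$ (equivalently, the discriminant condition $\eta(\mu)$). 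The key computation is that $t^2=\mu$ has two solutions precisely when $\mu$ is a square, so when $\mu$ is a non-square this boundary plane contributes an extra single contact point that is not captured by any $c\in\F_q^*$, producing the ``$+1$.'' Conversely, when $\mu$ is a square the boundary contributes a plane meeting $\C$ in more than one point (hence a $2_\C$- or $3_\C$-plane, already tracked by $\mathfrak{n}_q(\mu)$ in Lemma~\ref{lem5:Tpoints}), so no correction arises.

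I would finish by assembling these cases: for each $c\in\F_q^*$ the condition that $\widetilde{\Phi}_{\mu,c}(t)$ has exactly one $\F_q$-root gives exactly the $\overline{1_\C}$-planes parametrized by $\F_q^*$, numbering $\widetilde{\Nb}_1(\mu)$, and the separate analysis of the boundary plane contributes the additional $\overline{1_\C}$-plane exactly when $\eta(\mu)=-1$. The main care needed throughout is to ensure the parametrization $c\mapsto$ plane is injective on $\F_q^*$ and that no $\Gamma$-plane sneaks into the count (which is guaranteed since $\ell_\mu$ is an $\EnG$-line and hence lies in no osculating plane by the definition of $\EnG$-line, cf. Proposition~\ref{prop3:PiG=LambG=PbC=LbC=0}); the rest is the routine verification that the intersection cubic has the stated form.
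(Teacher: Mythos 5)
Your proposal follows essentially the same route as the paper: parametrize the planes through $\ell_\mu$ by imposing the conditions at $R_{\mu,0}$ and $R_{\mu,\infty}$, reduce the intersection with $\C$ to the cubic $\widetilde{\Phi}_{\mu,c}(t)$ for $c\in\F_q^*$, and treat the two degenerate members of the pencil separately, with the $+1$ arising from the plane $\boldsymbol{\pi}(0,1,0,-\mu)$ exactly when $\mu$ is a non-square. The one detail worth making explicit is that this boundary plane contains $P(\infty)$, which is what makes it a $\overline{1_\C}$-plane rather than a $0_\C$-plane when $t^2=\mu$ has no root in $\F_q$.
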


\begin{proof}
 Let $\widetilde{\pi}= \boldsymbol{\pi}(c_0,c_1,c_2,c_3)$ be a plane containing $\ell_\mu$.
As $R_{\mu,0},R_{\mu,\infty}\in\widetilde{\pi}$, we have  $\widetilde{\pi}= \boldsymbol{\pi}(c_0,c_1,-c_0,-\mu c_1)$.

Let $c_0=0$. Then $c_1\ne0$, $\widetilde{\pi}= \boldsymbol{\pi}(0,1,0,-\mu)$, $P(\infty)\in\widetilde{\pi}$. For $t\in\F_q$,  $P(t)\in\widetilde{\pi}$ if  $t^2=\mu$. So,
$\widetilde{\pi}$ is a $\overline{1_\C}$ (resp. $3_\C$)-plane if $\mu$ is a non-square (resp. square) in $\F_q$.

Let $c_1=0$. Then $c_0\ne0$, $\widetilde{\pi}= \boldsymbol{\pi}(1,0,-1,0)$, $P(\infty)\notin\widetilde{\pi}$. For $t\in\F_q$,  $P(t)\in\widetilde{\pi}$ if $t^3=t$. So,
$\widetilde{\pi}$ is a $3_\C$-plane.

Let $c_0,c_1\ne0$. Then $\widetilde{\pi}= \boldsymbol{\pi}(1,c,-1,-\mu c)$,
$c=c_1/c_0\in\F_q^*$, $P(\infty)\notin\widetilde{\pi}$. The point $P(t)$ of $\C$, $t\in\F_q$, lies in $\widetilde{\pi}$ if and only if $t$ satisfies $\widetilde{\Phi}_{\mu,c}(t)$ \eqref{eq5:cub eq}. If $\widetilde{\Phi}_{\mu,c}(t)$
has exactly one solution $t$ in $\F_q$ then $\widetilde{\pi}$ contains exactly one point $P(t)$, i.e. $\widetilde{\pi}$ is a $\overline{1_\C}$-plane.
\end{proof}

 \section{Point-line and plane-line incidence submatrices for orbits $\Os_\mu$, even  $q$}\label{sec:even q}
In this section we assume that $q$ is even. This implies the natural simplification of some elements of Sections \ref{subsubsece:lmu} and \ref{sec:lmuW&intersec}. For even $q$ we have
$\mu\in\F_q^*\setminus\{1\}$ and
\begin{align}
 & \varpi(\ell_\mu,\TT_t)=t^4+(\mu+1) t^2+\mu;\db\label{eq6:tangintersec}\\
&  \Phi_{\mu,\gamma}(t)=t^3+\gamma t^2+\mu t+\gamma=0,~ \widetilde{\Phi}_{\mu,c}(t)=t^3+c t^2+ t+\mu c=0.\label{eq6:cubeqeven}
 \end{align}

\subsection{Incidence submatrices}
\begin{lemma}\label{lem6:Qinf0&tang}
Let $q\ge8$ be even.
The points $R_{\mu,\infty}$ and  $R_{\mu,0}$ lie on the tangents $\TT_1$ and $\TT_{\sqrt{\mu}}$, respectively.  No other point of $\ell_{\mu}$  is a $\Tr$-point, i.e. for all orbits generated by the lines $\ell_{\mu}$ we have $\Pb_{\Tr}=2$.
 \end{lemma}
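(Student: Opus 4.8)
The plan is to show that the only $\Tr$-points on $\ell_\mu$ are its two distinguished points $R_{\mu,\infty}$ and $R_{\mu,0}$, each lying on exactly one tangent, and that no other point of $\ell_\mu$ meets a tangent. First I would recall from Lemma~\ref{lem5:Tpoints} that the number of $\Tr$-points on $\ell_\mu$ equals $\nk_q(\mu)$, the number of solutions $t\in\F_q$ of the tangent-intersection equation \eqref{eq6:tangintersec}, namely $\varpi(\ell_\mu,\TT_t)=t^4+(\mu+1)t^2+\mu=0$. So the whole statement reduces to a finite-field count: I must show this quartic has exactly two roots in $\F_q$, and then identify which tangents they correspond to and which points of $\ell_\mu$ lie on them.

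The key algebraic step is to factor the quartic. Over a field of characteristic $2$, $t^4+(\mu+1)t^2+\mu$ factors as $(t^2+1)(t^2+\mu)$, since expanding gives $t^4+\mu t^2+t^2+\mu=t^4+(\mu+1)t^2+\mu$. Now $t^2+1=(t+1)^2$ and $t^2+\mu=(t+\sqrt\mu)^2$ in characteristic $2$, where $\sqrt\mu$ denotes the unique square root of $\mu$ (the Frobenius $x\mapsto x^2$ is a bijection on $\F_q$ for even $q$). Thus the quartic has exactly the two distinct roots $t=1$ and $t=\sqrt\mu$, and these are distinct precisely because $\mu\neq 1$, which holds by the standing assumption $\mu\in\F_q^*\setminus\{1\}$. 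Hence $\nk_q(\mu)=2$, giving $\Pb_\Tr=2$.

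Next I would match the two roots to the two named points. By \eqref{eq5:mutInvarW} the root $t=t_0$ means $\ell_\mu$ meets the tangent $\TT_{t_0}$; I need to verify that the intersection point is the asserted one. For $t=1$ I would check directly, using the tangent coordinate vector $L^{\T{tang}}_1$ from \eqref{eq2:cvTang} and the plane/point coordinates, that $R_{\mu,\infty}=\Pf(1,0,1,0)$ lies on $\TT_1$; for $t=\sqrt\mu$ I would similarly verify $R_{\mu,0}=\Pf(0,\mu,0,1)$ lies on $\TT_{\sqrt\mu}$. Since the mutual invariant vanishes at exactly these two values of $t$, and a tangent meets $\ell_\mu$ in a single point, the two incidences account for all $\Tr$-points, and every other point $R_{\mu,\gamma}$ with $\gamma\in\F_q^*$ lies on no tangent.

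The main obstacle is essentially bookkeeping rather than difficulty: the clean factorization in characteristic $2$ makes the root count immediate, so the only real care needed is in the point-identification step—confirming that each of the two tangent intersections genuinely coincides with $R_{\mu,\infty}$ and $R_{\mu,0}$ rather than some other point of $\ell_\mu$, and checking that $q\ge 8$ (hence $\mu$ ranges over a nonempty set with $\mu\neq 1$) guarantees the two roots $1$ and $\sqrt\mu$ are distinct so that $\Pb_\Tr=2$ exactly. Once those coordinate verifications are done, the conclusion $\Pb_\Tr=2$ for all orbits $\Os_\mu$ follows from Lemma~\ref{lem3:orb the same}(ii), which guarantees this count is the same for every line in the orbit.
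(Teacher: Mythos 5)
Your proposal is correct and follows essentially the same route as the paper: identify the roots of $\varpi(\ell_\mu,\TT_t)=t^4+(\mu+1)t^2+\mu=0$ as $t=1$ and $t=\sqrt{\mu}$ (distinct since $\mu\ne1$), note that $\ell_\mu$ misses $\TT_\infty$, match the two intersections to $R_{\mu,\infty}$ and $R_{\mu,0}$, and invoke Lemma~\ref{lem3:orb the same}(ii) to pass to the whole orbit. The only cosmetic difference is that you make the characteristic-2 factorization $(t+1)^2(t+\sqrt{\mu})^2$ explicit and propose verifying the incidences by direct coordinate computation, whereas the paper simply asserts the two roots and cites external references for the tangent equations.
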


\begin{proof}
We use the equations of the corresponding tangents given in  \cite[Lemma 6.2]{DMP_OrbLineArX}, \cite[Lemma 5.2]{DMP_OrbLineMedit}.
Also, by \eqref{eq6:tangintersec}, the equation $\varpi(\ell_\mu,\TT_t)=0$ has exactly two solutions $t=1$ and $t=\sqrt{\mu}$.
By \eqref{eq5:mutInvarW}, $\ell_\mu$  does not intersect $\TT_\infty$.
Finally, we use Lemma~\ref{lem3:orb the same}(ii).
\end{proof}

\begin{lemma}\label{lem6:}
 Let $q$ be even. For the orbits $\Os_\mu$
the following holds.
  \begin{align}\label{eq6:solut&incid}
 & \Pb_{\Tr}=\Pi_{2_\C}=2,~\Pb_{1_\Gamma}=\Nb_{1}(\mu);~ \Pi_{\overline{1_\C}}=\widetilde{\Nb}_{1}(\mu).
      \end{align}
\end{lemma}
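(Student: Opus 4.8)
The plan is to establish the three equalities in \eqref{eq6:solut&incid} by combining the counting results of Section~\ref{sec:lmuW&intersec} with the general relations of Section~\ref{sec:useful}. The statement $\Pb_{\Tr}=\Pi_{2_\C}=2$ follows almost immediately: by Lemma~\ref{lem6:Qinf0&tang} (applicable since $q\ge 8$ is even, and the boundary case $q$ small would be handled separately or excluded by $q\ge5$ together with parity), the two points $R_{\mu,\infty}$ and $R_{\mu,0}$ are the only $\Tr$-points on $\ell_\mu$, giving $\Pb_{\Tr}=2$. Then \eqref{eq3:2C=t} from Proposition~\ref{prop3:PiG=LambG=PbC=LbC=0}, specialized to $q\not\equiv0\pmod3$ (which holds here since even $q$ with $\xi=\pm1$), yields $\Pi_{2_\C}=\Pb_{\Tr}=2$. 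I would note that for even $q$, $q\not\equiv0\pmod3$ is automatic unless $q=2^n$ with $3\mid q$, which never happens, so the relevant hypotheses of Proposition~\ref{prop3:PiG=LambG=PbC=LbC=0} are satisfied.

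Next I would address $\Pi_{\overline{1_\C}}=\widetilde{\Nb}_1(\mu)$. This is essentially a direct invocation of Lemma~\ref{lem5:1Cplanes}: the number of $\overline{1_\C}$-planes through $\ell_\mu$ equals $\widetilde{\Nb}_1(\mu)$ when $\mu$ is a square and $\widetilde{\Nb}_1(\mu)+1$ otherwise. The key observation for even $q$ is that \emph{every} element of $\F_q^*$ is a square, since squaring is a bijection (the Frobenius-type map $x\mapsto x^2$ is an automorphism in characteristic $2$). Hence $\mu$ is always a square, the ``otherwise'' branch never occurs, and Lemma~\ref{lem5:1Cplanes} collapses to $\Pi_{\overline{1_\C}}=\widetilde{\Nb}_1(\mu)$ unconditionally. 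This removes the case distinction that appears in the general odd-$q$ setting.

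For the remaining equality $\Pb_{1_\Gamma}=\Nb_1(\mu)$, I would argue that $\Nb_1(\mu)$ counts those $\gamma\in\F_q$ for which the cubic $\Phi_{\mu,\gamma}(t)$ of \eqref{eq6:cubeqeven} has exactly one root in $\F_q$, and by Lemma~\ref{lem5:cubic_equation} such a $\gamma$ corresponds to a point $R_{\mu,\gamma}$ lying on exactly one osculating plane, i.e.\ a $1_\Gamma$-point. Thus the affine points $R_{\mu,\gamma}$, $\gamma\in\F_q$, contribute $\Nb_1(\mu)$ many $1_\Gamma$-points. I must then check the point at infinity $R_{\mu,\infty}$ does not add to this count: by Lemma~\ref{lem6:Qinf0&tang} it is a $\Tr$-point, hence not a $1_\Gamma$-point, so it is correctly excluded. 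The main obstacle here is bookkeeping consistency across the parametrization—I would verify that each $1_\Gamma$-point on $\ell_\mu$ is hit exactly once by the parameter $\gamma$ and that no multiplicity or coincidence (for instance a $\gamma$ yielding a point that is simultaneously accounted for as a $\Tr$-point) corrupts the tally. Since $R_{\mu,\gamma}\notin\pi_\T{osc}(\infty)$ for $\gamma\in\F_q$ by Theorem~\ref{th2:orbitellmu}(i), the infinite osculating plane contributes nothing spurious, and the correspondence between $\gamma$ and affine points of $\ell_\mu$ is a bijection, so the count is clean. Assembling these three pieces gives \eqref{eq6:solut&incid}.
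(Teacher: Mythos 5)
Your proposal is correct and follows essentially the same route as the paper's proof, which likewise combines Lemma~\ref{lem6:Qinf0&tang} with \eqref{eq3:2C=t} for $\Pb_{\Tr}=\Pi_{2_\C}=2$, Lemma~\ref{lem5:cubic_equation} for $\Pb_{1_\Gamma}$, and Lemma~\ref{lem5:1Cplanes} for $\Pi_{\overline{1_\C}}$. You usefully make explicit two points the paper leaves implicit: that every element of $\F_q^*$ is a square in characteristic $2$ (so the ``$+1$'' branch of Lemma~\ref{lem5:1Cplanes} never occurs) and that $R_{\mu,\gamma}\notin\pi_\T{osc}(\infty)$ keeps the count of osculating planes through $R_{\mu,\gamma}$ clean.
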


\begin{proof}
By Lemma \ref{lem5:cubic_equation}, for a fixed $\gamma$, if the equation $\Phi_{\mu,\gamma}(t)$ \eqref{eq5:cubEq} (see also \eqref{eq6:cubeqeven}) has exactly $m$ distinct solutions in $\F_q$ then the point $R_{\mu,\gamma}$ belongs to exactly $m$ distinct osculating planes.  Also we use \eqref{eq3:2C=t}, Lemmas \ref{lem5:1Cplanes}, \ref{lem6:Qinf0&tang}.
\end{proof}

\begin{theorem}Let $q$ be even.
For the submatrices of the point-line and plane-line incidence matrices with respect to the orbit $\Os_\mu$, the values $\Pb_{\pk}$,
$\Lb_{\pk}$ and  $\Pi_{\pi}$,  $\Lambda_{\pi}$ (see Notation \emph{\ref{notation_2}}), are as in Tables \emph{\ref{tab:even_point-l}} and \emph{\ref{tab:even_plane-l}}, respectively; see also  Proposition \emph{\ref{prop3:PiG=LambG=PbC=LbC=0}}. The sizes of orbits $\#\M_\pk$ and $\#\N_{\pi}$ are noted in the top of columns.
\end{theorem}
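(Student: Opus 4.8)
The plan is to assemble the entries of Tables~\ref{tab:even_point-l} and \ref{tab:even_plane-l} from the building blocks already established, combining the ``local'' incidence counts for a single line $\ell_\mu$ with the orbit-size dictionary of Lemma~\ref{lem3:orb the same}(iii). First I would record the inputs that are now available for even $q$: from Lemma~\ref{lem6:} we have $\Pb_{\Tr}=\Pi_{2_\C}=2$, $\Pb_{1_\Gamma}=\Nb_1(\mu)$, and $\Pi_{\overline{1_\C}}=\widetilde{\Nb}_1(\mu)$; from \eqref{eq3:=0} we have the vanishing quartet $\Pi_\Gamma=\Lambda_\Gamma=\Pb_\C=\Lb_\C=0$. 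These give the seed values in the ``top rows'' of the two tables directly.

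Next I would close the system of unknowns using the linear relations of Section~\ref{sec:useful}. The remaining plane-side unknowns $\Pi_{3_\C}$ and $\Pi_{0_\C}$ are forced: \eqref{eq3:Pi1Pi2Pi3} gives $\Pi_{\overline{1_\C}}+2\Pi_{2_\C}+3\Pi_{3_\C}=q+1$, so $\Pi_{3_\C}=(q+1-\widetilde{\Nb}_1(\mu)-4)/3=(q-3-\widetilde{\Nb}_1(\mu))/3$, and then \eqref{eq3:Pi1Pi2Pi3b} yields $\Pi_{0_\C}=\Pi_{2_\C}+2\Pi_{3_\C}$. On the point side, \eqref{eq3:Pi1Pi2Pi3c} gives $\Pb_{1_\Gamma}+2\Pb_{\Tr}+3\Pb_{3_\Gamma}=q+1$, hence $\Pb_{3_\Gamma}=(q-3-\Nb_1(\mu))/3$, and \eqref{eq3:Pi1Pi2Pi3d} gives $\Pb_{0_\Gamma}=\Pb_{\Tr}+2\Pb_{3_\Gamma}$. (Note that for even $q$ we are always in the regime $q\not\equiv0\pmod3$ when $\xi\neq0$, and the case $q\equiv0\pmod3$ uses the $\TO$-point relations analogously; I would handle the $\xi$-dependence exactly as the odd-$q$ theorem did, splitting by the residue of $q\bmod 3$.) Every $\Pb_\pk$ and $\Pi_\pi$ is now an explicit expression in $q$, $\Nb_1(\mu)$, and $\widetilde{\Nb}_1(\mu)$.

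Having all the ``number through/in'' values, I would convert to the ``number of lines'' values $\Lb_\pk$ and $\Lambda_\pi$ by the mass formulas \eqref{eq3:obtainLamb1}, namely $\Lambda_\pi=\Pi_\pi\cdot\#\Os_\mu/\#\N_\pi$ and $\Lb_\pk=\Pb_\pk\cdot\#\Os_\mu/\#\M_\pk$, substituting the orbit sizes $\#\N_\pi$ and $\#\M_\pk$ from Theorem~\ref{th2_Hirs}(ii)(iv) and the value of $\#\Os_\mu$ from \eqref{eq2:orbitellmu}. Since $q$ is even, $\#\Os_\mu\in\{(q^3-q)/2,(q^3-q)/4\}$ reduces (the $\Upsilon_{q,\mu}$ and square-in-$\F_q$ subcases collapse because squaring is a bijection in characteristic~$2$), so the orbit-size branch is mild here; I would state which branch applies and fill the quotients.

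The main obstacle I expect is bookkeeping rather than a single hard idea: ensuring that all the displayed table entries are simultaneously \emph{integers} and \emph{nonnegative}, which amounts to verifying the congruences $\widetilde{\Nb}_1(\mu)\equiv q-3\equiv q\pmod 3$ and $\Nb_1(\mu)\equiv q\pmod3$ so that $\Pi_{3_\C}$ and $\Pb_{3_\Gamma}$ are whole numbers, together with matching the parities/divisibilities needed for the $\Lambda$ and $\Lb$ quotients against the orbit sizes. I would expect these integrality checks to follow from the structure of the cubic equations \eqref{eq6:cubeqeven} (a monic cubic over $\F_q$ has $0$, $1$, or $3$ roots, never exactly $2$ unless it has a repeated root, which constrains the count modulo~$3$) combined with the fact that the total plane/point counts sum to $q+1$; the remaining task is then purely to record the resulting closed forms in the two tables, which is what the proof of the theorem does.
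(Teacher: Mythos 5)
Your proposal is correct and follows essentially the same route as the paper: seed values $\Pb_{\Tr}=\Pi_{2_\C}=2$, $\Pb_{1_\Gamma}=\Nb_1(\mu)$, $\Pi_{\overline{1_\C}}=\widetilde{\Nb}_1(\mu)$ from \eqref{eq6:solut&incid}, then close via \eqref{eq3:Pi1Pi2Pi3}--\eqref{eq3:Pi1Pi2Pi3d} and convert to $\Lb_\pk$, $\Lambda_\pi$ by \eqref{eq3:obtainLamb1}. The only (harmless) extra baggage is your worry about case splits: for even $q=2^n$ one automatically has $q\not\equiv0\pmod 3$, and \eqref{eq2:orbitellmu} gives $\#\Os_\mu=(q^3-q)/2$ unconditionally, so no branching on $\xi$ or on the size of $\Os_\mu$ is needed.
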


\begin{proof}
  For $\Os_\mu$, we obtain the values of $\Pb_\Tr$ and $\Pb_{1_\Gamma}$ using \eqref{eq6:solut&incid}. Then we calculate $\Pb_{3_\Gamma}$ by \eqref{eq3:Pi1Pi2Pi3c} and $\Pb_{0_\Gamma}$ by \eqref{eq3:Pi1Pi2Pi3d}. Finally, we obtain all $\Lb_\pk$ by \eqref{eq3:obtainLamb1}. This gives Table \ref{tab:even_point-l}.

 We obtain $\Pi_{2_\C}$ and $\Pi_{\overline{1_\C}}$ by \eqref{eq6:solut&incid}, calculate $\Pi_{3_\C}$ by \eqref{eq3:Pi1Pi2Pi3}, $\Pi_{0_\C}$by \eqref{eq3:Pi1Pi2Pi3b}, and all $\Lambda_\pi$ by \eqref{eq3:obtainLamb1}. This gives Table \ref{tab:even_plane-l}.
\end{proof}

\begin{table}[htbp]
\caption{Values $\Pb_{\pk}$ (the  number of $\pk$-points on a line from  $\Os_\mu$, top entry) and $\Lb_{\pk}$ (the number of lines from  $\Os_\mu$ through a $\pk$-point, bottom entry) for the point-line incidence submatrices regarding the orbit $\Os_{\mu}$; $q$ is even,
 $\#\Os_{\mu}=(q^3-q)/2$}
\centering
\begin{tabular}{cccccc@{}}\hline
$q^2+q$&$\frac{1}{6}(q^3-q)$&$ \frac{1}{2}(q^3-q)$&$\frac{1}{3}(q^3-q)$\\
$\Pb_\Tr$&$\Pb_{3_\Gamma}$&$\Pb_{1_\Gamma}$&$\Pb_{0_\Gamma}$\\
$\Lb_\Tr$&$\Lb_{3_\Gamma}$&$\Lb_{1_\Gamma}$&$\Lb_{0_\Gamma}$\\\hline

$2$&$ \frac{1}{3}(q-3-\Nb_{1}(\mu))$&$ \Nb_{1}(\mu)$&$ \frac{2}{3}(q-\Nb_{1}(\mu))$\\
$q-1$&$q-3-\Nb_{1}(\mu)$&$\Nb_{1}(\mu)$&$q-\Nb_{1}(\mu)$\\\hline
\end{tabular}
\label{tab:even_point-l}
\end{table}

\begin{table}[htbp]
\caption{Values $\Pi_{\pi}$ (the  number of $\pi$-planes through a line from $\Os_\mu$, top entry) and
$\Lambda_{\pi}$ (the number of lines from  $\Os_\mu$ in a $\pi$-plane, bottom entry) for the plane-line incidence submatrices regarding the orbit $\Os_{\mu}$; $q$ is even,
 $\#\Os_{\mu}=(q^3-q)/2$}
\centering
\begin{tabular}{cccc}\hline
$q^2+q$&$\frac{1}{6}(q^3-q)$&$ \frac{1}{2}(q^3-q)$&$\frac{1}{3}(q^3-q)$\\
$\Pi_{2_\C}$&$\Pi_{3_\C}$&$\Pi_{\overline{1_\C}}$&$\Pi_{0_\C}$\\
$\Lambda_{2_\C}$&$\Lambda_{3_\C}$&$\Lambda_{\overline{1_\C}}$&$\Lambda_{0_\C}$\\\hline
$2$&$ \frac{1}{3}(q-3-\widetilde{\Nb}_{1}(\mu))$&$ \widetilde{\Nb}_{1}(\mu)$&$ \frac{2}{3}(q-\widetilde{\Nb}_{1}(\mu))\vphantom{H^{H^{H^H}}}$\\
$q-1$&$q-3-\widetilde{\Nb}_{1}(\mu)$&$\widetilde{\Nb}_{1}(\mu)$&$q-\widetilde{\Nb}_{1}(\mu)$\\\hline
\end{tabular}
\label{tab:even_plane-l}
\end{table}

\subsection{Obtaining of  $\Nb_{1}(\mu)$ and  $\widetilde{\Nb}_{1}(\mu)$ for equations $\Phi_{\mu,\gamma}(t)$ and $ \widetilde{\Phi}_{\mu,c}(t)$ of \eqref{eq6:cubeqeven}}

By \cite[Section 1.8]{Hirs_PGFF} the invariants $\delta$ and $\widetilde{\delta}$ of  $\Phi_{\mu,\gamma}(t)$ and $ \widetilde{\Phi}_{\mu,c}(t)$ of \eqref{eq6:cubeqeven} are, respectively,
\begin{align}\label{eq6:invar}
 &  \delta_{\mu,\gamma}=\frac{\mu^3+\gamma^4}{\gamma^2(\mu+1)^2}+\frac{1}{\mu+1},
 ~\widetilde{\delta}_{\mu,c}=\frac{1+\mu c^4}{c^2(\mu+1)^2}+\frac{\mu}{\mu+1},~\gamma, c\in\F_q^*.
   \end{align}

\begin{lemma}\label{lem6:1solution}
  Let $q$ be even. Let $\mathrm{Tr}_2(a)$ be the absolute trace of an element $a$ of $\F_q$. Then
\begin{align}\label{eq6:W}
  &\Nb_{1}(\mu))=\#\{\gamma\;|\;\mathrm{Tr}_2(\delta_{\mu,\gamma})=1,~\gamma\in\F_q^*\};
  ~\widetilde{\Nb}_{1}(\mu)=\#\{c\;|\;\mathrm{Tr}_2(\widetilde{\delta}{}_{\mu,c})=1,~c\in\F_q^*\}.
\end{align}
\end{lemma}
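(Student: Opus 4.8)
The plan is to deduce both equalities from the standard root-counting criterion for cubics over a field of even characteristic, \cite[Theorem 1.34]{Hirs_PGFF}: a cubic over $\F_q$ with three distinct roots has exactly one root in $\F_q$ if and only if the absolute trace of its invariant $\delta$ equals $1$ (the complementary value $\mathrm{Tr}_2(\delta)=0$ corresponding to $0$ or $3$ roots). Because this criterion presupposes that the cubic has no repeated roots, the first step is to verify this property for $\Phi_{\mu,\gamma}(t)$ and $\widetilde{\Phi}_{\mu,c}(t)$ of \eqref{eq6:cubeqeven} under the standing hypotheses $\mu\in\F_q^*\setminus\{1\}$ and $\gamma,c\in\F_q^*$; these same hypotheses also guarantee that the denominators appearing in the invariants \eqref{eq6:invar} are nonzero, so $\delta_{\mu,\gamma}$ and $\widetilde{\delta}_{\mu,c}$ are well defined.

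The no-repeated-root check is a short computation in characteristic $2$. For $\Phi_{\mu,\gamma}$ the formal derivative is $\Phi'_{\mu,\gamma}(t)=t^2+\mu=(t+\sqrt{\mu})^2$, whose only zero is $t=\sqrt{\mu}$; a repeated root would force $\Phi_{\mu,\gamma}(\sqrt{\mu})=0$, but $\Phi_{\mu,\gamma}(\sqrt{\mu})=\gamma(\mu+1)\ne0$ since $\gamma\ne0$ and $\mu\ne1$. Likewise $\widetilde{\Phi}'_{\mu,c}(t)=t^2+1=(t+1)^2$, and $\widetilde{\Phi}_{\mu,c}(1)=c(\mu+1)\ne0$. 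Hence both cubics have three distinct roots in the algebraic closure, and \cite[Theorem 1.34]{Hirs_PGFF} applies: for each admissible $\gamma$ (resp.\ $c$), the equation $\Phi_{\mu,\gamma}(t)=0$ (resp.\ $\widetilde{\Phi}_{\mu,c}(t)=0$) has exactly one solution in $\F_q$ if and only if $\mathrm{Tr}_2(\delta_{\mu,\gamma})=1$ (resp.\ $\mathrm{Tr}_2(\widetilde{\delta}_{\mu,c})=1$).

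It then remains to reconcile the ranges of the counting parameters with the formulas \eqref{eq6:W}. For $\widetilde{\Nb}_1(\mu)$ the parameter $c$ already runs over $\F_q^*$, so the second formula follows immediately by summing the criterion over $c\in\F_q^*$. For $\Nb_1(\mu)$, which is defined with $\gamma$ ranging over all of $\F_q$, I would inspect the excluded value $\gamma=0$ separately: there $\Phi_{\mu,0}(t)=t^3+\mu t=t(t+\sqrt{\mu})^2$ has the two distinct roots $t=0$ and $t=\sqrt{\mu}\ne0$, so it does not contribute a value with exactly one solution, and restricting the count to $\gamma\in\F_q^*$ loses nothing. Summing the criterion over $\gamma\in\F_q^*$ then yields the first formula. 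The only delicate points are confirming that the invariant expressions in \eqref{eq6:invar} are precisely those entering the hypothesis of \cite[Theorem 1.34]{Hirs_PGFF}, and the clean bookkeeping of the degenerate values $\gamma=0$, $c=0$, $\mu=1$ that are deliberately excluded from the definitions of the invariants.
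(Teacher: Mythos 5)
Your proof is correct and follows essentially the same route as the paper, whose one-line proof simply invokes Hirschfeld's trace criterion for the number of $\F_q$-roots of a cubic in even characteristic (\cite[Corollary 1.15(ii)]{Hirs_PGFF}) applied to the invariants of \eqref{eq6:invar}. The additional details you supply --- the separability check via the formal derivative and the observation that $\gamma=0$ gives $\Phi_{\mu,0}(t)=t(t+\sqrt{\mu})^2$ with two distinct roots and hence contributes nothing to $\Nb_{1}(\mu)$ --- are left implicit in the paper but are accurate.
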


\begin{proof}
We use \cite[Corollary 1.15(ii)]{Hirs_PGFF}.
\end{proof}

\begin{example}\label{examp7}
In Table \ref{tab:examp even}, for even $q$, we give the values of $\Nb_1(\mu)$ and $\widetilde{\Nb}_1(\mu)$, obtained by the system MAGMA. By $\N$ we denote the number of $\mu$ providing the given values.
\begin{table}[h]
\caption{Values of $\Nb_1(\mu)$ and $\widetilde{\Nb}_1(\mu)$; $\N$ is the number of cases; even $q$}
\centering
\begin{tabular}{cccc|ccc|ccc}\hline
$q$&$2\Nb_1(\mu)$&$2\widetilde{\Nb}_1(\mu)$&$\N$&$2\Nb_1(\mu)$&$2\widetilde{\Nb}_1(\mu)$&$\N$&$2\Nb_1(\mu)$&$2\widetilde{\Nb}_1(\mu)$&$\N$
$\vphantom{H^{H^{H^H}}}$\\\hline

$2^3$&$q+2$&$q-4$&3&$q-4$&$q+2$&3\\\hline

$2^5$&$q+8$&$q-10$&5&$q-10$&$q+8$&5\\
&$q+2$&$q-4$&10&$q-4$&$q+2$&10\\\hline

$2^7$&$q+20$&$q-22$&7&$q-22$&$q+20$&7\\
&$q+14$&$q-16$&14&$q-16$&$q+14$&14\\
&$q+8$&$q-10$&21&$q-10$&$q+8$&21\\
&$q+2$&$q-4$&21&$q-4$&$q+2$&21\\\hline

$2^4$&$q-8$&$q-8$&2&$q-2$&$q-2$&8&$q+4$&$q+4$&4\\\hline

$2^6$&$q-14$&$q-14$&6&$q-8$&$q-8$&18&$q-2$&$q-2$&12\\
&$q+4$&$q+4$&12&$q+10$&$q+10$&14&\\\hline

$2^8$&$q-32$&$q-32$&8&$q-26$&$q-26$&16&$q-20$&$q-20$&16\\
&$q-14$&$q-14$&48&$q-8$&$q-8$&20&$q-2$&$q-2$&16\\
&$q+4$&$q+4$&56&$q+10$&$q+10$&16&$q+16$&$q+16$&18\\
&$q+22$&$q+22$&32&$q+28$&$q+28$&8&\\\hline
\end{tabular}
\label{tab:examp even}
\end{table}
Table \ref{tab:examp even} allows us to do the following interesting observations:
\begin{itemize}
  \item  Let $q=2^{2m}$, i.e. $q\equiv1\pmod3$. Then $\Nb_1(\mu)=\widetilde{\Nb}_1(\mu)$. The values of $\Nb_1(\mu)$ form an arithmetic progression with difference 3 and the first term $q/2-\Delta $, $\Delta\equiv1\pmod3$.
  \item  Let $q=2^{2m+1}$, i.e. $q\equiv-1\pmod{3}$. Then the set $\{(\Nb_1(\mu),\widetilde{\Nb}_1(\mu))|\mu\in\F_q^*\setminus\{1\}\}$ is partitioned into pairs of the form $(\Nb_1(\mu),\widetilde{\Nb}_1(\mu))=(a,b)$ and $(\Nb_1(\mu),\widetilde{\Nb}_1(\mu))=(b,a)$ where $(a,b)$ forms the series $(q/2+1,q/2-2)$, $(q/2+4,q/2-5)$, $(q/2+7,q/2-8)$ ...
      $(q/2+A,q/2-A-1)$, $A=3\cdot2^{m-1}-2$. The number $\N$ of $\mu$ providing each pair is divided by $2m+1$.
\end{itemize}

\end{example}

\section{Point-line and plane-line incidence submatrices for orbits $\Os_\lambda$ for odd  $q\not\equiv0\pmod3$}\label{sec:odd qne0}
In this section  $q$ is odd and $q\not\equiv0\pmod3$. By \eqref{eq5:eqtang},
it can be shown
 \begin{align}\label{eq7:solution tang}
 \mathfrak{n}_{q}(\mu)=\#\left\{t|t=\pm\sqrt{\frac{1}{2}\left(3\mu-1\pm\sqrt{(\mu-1)(9\mu-1}\right)},~ t\in\F_q\right\}\in\{0,2,4\}.
  \end{align}
\subsection{Incidence submatrices}
\begin{lemma}
 Let $q$ be odd, $q\not\equiv0\pmod3$.
 For the orbit $\Os_\mu$ we have
    \begin{align}\label{eq7:solut&incid}
 &  \Pb_{\Tr}=\Pi_{2_\C}=\mathfrak{n}_q(\mu)\T{ for all }q, \mu;\db\\
 &\Pb_{1_\Gamma}=\Nb_{1}(\mu)+1\T{ if }q\equiv-1\pmod3,~\Pb_{1_\Gamma}=\Nb_{1}(\mu)\T{ if }q\equiv1\pmod3;\dbn\\
  &\Pi_{\overline{1_\C}}=\widetilde{\Nb}_{1}(\mu)\T{ if }\mu\T{ is a square in }\F_q,~\Pi_{\overline{1_\C}}=\widetilde{\Nb}_{1}(\mu)+1\T{ otherwise}.\notag
      \end{align}
   \end{lemma}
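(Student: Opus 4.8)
The plan is to prove each of the three equalities in \eqref{eq7:solut&incid} by connecting the incidence counts $\Pb_\pk$ and $\Pi_\pi$ with the solution counts of the three auxiliary equations studied in Section~\ref{sec:lmuW&intersec}, exactly as was done for even $q$ in Lemma~\ref{lem6:}. The structure mirrors that earlier proof, the only novelty being the corrections forced by the odd characteristic and the behaviour of the point $R_{\mu,\infty}$.

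First I would establish $\Pb_{\Tr}=\Pi_{2_\C}=\mathfrak{n}_q(\mu)$. The equality $\Pb_{\Tr}=\Pi_{2_\C}$ is immediate from \eqref{eq3:2C=t} in Proposition~\ref{prop3:PiG=LambG=PbC=LbC=0}, valid since $q\not\equiv0\pmod3$. That this common value equals $\mathfrak{n}_q(\mu)$ is exactly the content of Lemma~\ref{lem5:Tpoints}, which identifies both the number of $\Tr$-points on $\ell_\mu$ and the number of $2_\C$-planes through $\ell_\mu$ with the number of tangents meeting $\ell_\mu$, i.e.\ with the count $\mathfrak{n}_q(\mu)$ of roots of $\varpi(\ell_\mu,\TT_t)$. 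So this first line needs only the citation of those two results; no further computation is required here.

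Next I would treat $\Pb_{1_\Gamma}$. By Lemma~\ref{lem5:cubic_equation}, a point $R_{\mu,\gamma}$ with $\gamma\in\F_q$ lies on exactly as many osculating planes $\pi_\T{osc}(t)$, $t\in\F_q$, as the equation $\Phi_{\mu,\gamma}(t)$ has roots in $\F_q$; hence the affine points of $\ell_\mu$ contribute $\Nb_1(\mu)$ points lying on exactly one osculating plane coming from $t\in\F_q$. The subtlety absent in the even case is the point $R_{\mu,\infty}=\Pf(1,0,1,0)$, which always lies on $\pi_\T{osc}(\infty)$. By Lemma~\ref{lem5:R&RWmuinf}, $R_{\mu,\infty}$ is a $1_\Gamma$-point precisely when $q\equiv-1\pmod3$ (it lies on no further osculating plane), whereas for $q\equiv1\pmod3$ it lies on the two extra planes $\pi_\T{osc}(\pm\sqrt{-1/3})$ and so is a $3_\Gamma$-point, contributing nothing to $\Pb_{1_\Gamma}$. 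This yields the ``$+1$'' when $q\equiv-1\pmod3$ and no correction when $q\equiv1\pmod3$. One must also check that the affine count $\Nb_1(\mu)$ is genuinely a count of $1_\Gamma$-points and not of points on a tangent-induced osculating plane, but a point off $\C$ on exactly one osculating plane is by definition a $1_\Gamma$-point, so this is clean.

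Finally, $\Pi_{\overline{1_\C}}=\widetilde{\Nb}_1(\mu)$ or $\widetilde{\Nb}_1(\mu)+1$ according to whether $\mu$ is a square, is precisely the statement of Lemma~\ref{lem5:1Cplanes}, so this line is again just a citation. The only genuine obstacle in the whole argument is the careful bookkeeping for $R_{\mu,\infty}$: one must be sure that $R_{\mu,\infty}$ is the \emph{only} point of $\ell_\mu$ lying on $\pi_\T{osc}(\infty)$ and that its type is correctly read off, so that the affine count $\Nb_1(\mu)$ and the single point at infinity do not double-count or miss an incidence. Given Lemma~\ref{lem5:R&RWmuinf} this is resolved, and assembling the three lines completes the proof.
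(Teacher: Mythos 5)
Your proposal is correct and follows essentially the same route as the paper's own proof: Lemma \ref{lem5:Tpoints} (with \eqref{eq3:2C=t}) for the first line, Lemma \ref{lem5:cubic_equation} for the affine count combined with Lemma \ref{lem5:R&RWmuinf} for the point $R_{\mu,\infty}$ to get $\Pb_{1_\Gamma}$, and Lemmas \ref{lem3:orb the same} and \ref{lem5:1Cplanes} for $\Pi_{\overline{1_\C}}$. Your extra bookkeeping remarks (that $R_{\mu,\gamma}\notin\pi_\T{osc}(\infty)$ for $\gamma\in\F_q$, guaranteed by Theorem \ref{th2:orbitellmu}(i), and that a point on exactly one osculating plane is a $1_\Gamma$-point rather than a $\Tr$-point) are valid and only make explicit what the paper leaves implicit.
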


\begin{proof}
By Lemma \ref{lem5:cubic_equation}, for a fixed $\gamma\in\F_q$, if $\Phi_{\mu,\gamma}(t)$ \eqref{eq5:cubEq} has exactly one solution $t$ in $\F_q$ then the point $R_{\mu,\gamma}$ belongs to exactly one osculating plane. So, the set $\ell_{\mu}\setminus\{R_{\mu,\infty}\}$ contains $\Nb_1(\mu)$ points belonging to exactly one osculating plane.  Then, using Lemmas \ref{lem5:R&RWmuinf} and \ref{lem5:Tpoints} we obtain $\Pb_{1_\Gamma}$ and $\Pb_{\Tr}=\Pi_{2_\C}$. Finally, by Lemmas \ref{lem3:orb the same} and \ref{lem5:1Cplanes}, we obtain $\Pi_{\overline{1_\C}}$.
\end{proof}

\begin{theorem}
Let $q$ be odd, $q\not\equiv0\pmod3$.
For the submatrices of the point-line and plane-line incidence matrices with respect to the orbit $\Os_\mu$, the values $\Pb_{\pk}$,
$\Lb_{\pk}$  and  $\Pi_{\pi}$,  $\Lambda_{\pi}$ (see Notation \emph{\ref{notation_2}}), are as in Tables \emph{\ref{tab:qodd_point-l}} and \emph{\ref{tab:qodd_plane-l}}, respectively; see also  Proposition \emph{\ref{prop3:PiG=LambG=PbC=LbC=0}}. The sizes of orbits $\#\M_\pk$ and $\#\N_{\pi}$ are noted in the top of columns.
\end{theorem}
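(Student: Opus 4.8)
The plan is to take the three ``seed'' parameters supplied by the preceding lemma, namely $\Pb_{\Tr}=\Pi_{2_\C}=\mathfrak{n}_q(\mu)$ together with $\Pb_{1_\Gamma}$ and $\Pi_{\overline{1_\C}}$ from \eqref{eq7:solut&incid}, as knowns, and then to fill every remaining entry of Tables~\ref{tab:qodd_point-l} and \ref{tab:qodd_plane-l} by propagating these through the linear relations of Section~\ref{sec:useful}. The vanishing entries $\Pi_\Gamma=\Lambda_\Gamma=\Pb_\C=\Lb_\C=0$ are already recorded in Proposition~\ref{prop3:PiG=LambG=PbC=LbC=0}, so only the four ``non-trivial'' point counts $\Pb_{3_\Gamma},\Pb_{0_\Gamma}$ and plane counts $\Pi_{3_\C},\Pi_{0_\C}$, together with all the reciprocal counts $\Lb_\pk,\Lambda_\pi$, remain to be determined. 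This mirrors exactly the argument used for even $q$; the only genuine extra work is keeping track of the case distinctions now present in the seed values.

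For the point-line submatrix I would first obtain $\Pb_{3_\Gamma}$ by solving \eqref{eq3:Pi1Pi2Pi3c} for $\Pb_{3_\Gamma}$, substituting $\Pb_\Tr=\mathfrak{n}_q(\mu)$ and the two branches $\Pb_{1_\Gamma}=\Nb_1(\mu)+1$ (for $q\equiv-1\pmod 3$) or $\Pb_{1_\Gamma}=\Nb_1(\mu)$ (for $q\equiv1\pmod3$), and then deduce $\Pb_{0_\Gamma}$ directly from \eqref{eq3:Pi1Pi2Pi3d}. For the plane-line submatrix the same two steps apply on the plane side: $\Pi_{3_\C}$ comes from \eqref{eq3:Pi1Pi2Pi3} using $\Pi_{2_\C}=\mathfrak{n}_q(\mu)$ and the two branches $\Pi_{\overline{1_\C}}=\widetilde{\Nb}_1(\mu)$ (if $\mu$ is a square) or $\widetilde{\Nb}_1(\mu)+1$ (otherwise), and then $\Pi_{0_\C}$ follows from \eqref{eq3:Pi1Pi2Pi3b}. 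At this stage every ``through a fixed line'' count is expressed in closed form in terms of $\mathfrak{n}_q(\mu)$, $\Nb_1(\mu)$, $\widetilde{\Nb}_1(\mu)$ and $q$.

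Finally I would pass from these counts to the reciprocal counts $\Lb_\pk$ and $\Lambda_\pi$ by the duality formulas \eqref{eq3:obtainLamb1} of Lemma~\ref{lem3:orb the same}(iii), inserting the orbit sizes $\#\M_\pk=\#\N_{\pi(\pk)}$ from Theorem~\ref{th2_Hirs}(iii) and the size $\#\Os_\mu$ from Theorem~\ref{th2:orbitellmu}(iii). The main obstacle lies precisely here: for odd $q$ the quantity $\#\Os_\mu$ is no longer constant but takes the three values $(q^3-q)/2$, $(q^3-q)/4$, $(q^3-q)/12$ according to whether $\mu$ is a non-square, a square with $\Upsilon_{q,\mu}$ false, or a square with $\Upsilon_{q,\mu}$ true, so each $\Lb_\pk$ and $\Lambda_\pi$ splits into the corresponding sub-cases. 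I expect the real care to be in verifying that every resulting entry is a nonnegative integer, i.e. that the division by $3$ coming from \eqref{eq3:Pi1Pi2Pi3}--\eqref{eq3:Pi1Pi2Pi3c} and the division by the orbit-size denominator are simultaneously consistent in each branch; this is where the parity of $\Nb_1(\mu)$ and the square/non-square dichotomy of $\mu$ must be invoked, exactly as in the even case but with more branches to check.
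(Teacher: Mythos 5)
Your proposal is correct and follows essentially the same route as the paper: seed the tables with $\Pb_{\Tr}=\Pi_{2_\C}=\mathfrak{n}_q(\mu)$, $\Pb_{1_\Gamma}$, and $\Pi_{\overline{1_\C}}$ from \eqref{eq7:solut&incid}, propagate to $\Pb_{3_\Gamma}$, $\Pb_{0_\Gamma}$, $\Pi_{3_\C}$, $\Pi_{0_\C}$ via \eqref{eq3:Pi1Pi2Pi3}--\eqref{eq3:Pi1Pi2Pi3d}, and then obtain all $\Lb_\pk$ and $\Lambda_\pi$ from \eqref{eq3:obtainLamb1} using the orbit sizes of Theorems \ref{th2_Hirs} and \ref{th2:orbitellmu}. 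Your additional attention to the case splitting of $\#\Os_\mu$ and to integrality of the resulting entries is sensible bookkeeping but does not change the argument.
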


\begin{proof}
  For $\Os_\mu$, we obtain the values of $\Pb_\Tr$ and $\Pb_{1_\Gamma}$ using \eqref{eq7:solut&incid}. Then we calculate $\Pb_{3_\Gamma}$ by \eqref{eq3:Pi1Pi2Pi3c} and $\Pb_{0_\Gamma}$ by \eqref{eq3:Pi1Pi2Pi3d}. Finally, we obtain all $\Lb_\pk$ by \eqref{eq3:obtainLamb1}. This gives Table \ref{tab:qodd_point-l}.

 Then we obtain  $\Pi_{2_\C}$, $\Pi_{\overline{1_\C}}$ by \eqref{eq7:solut&incid}, calculate $\Pi_{3_\C}$ by \eqref{eq3:Pi1Pi2Pi3}, $\Pi_{0_\C}$by \eqref{eq3:Pi1Pi2Pi3b}, and all $\Lambda_\pi$ by \eqref{eq3:obtainLamb1}. This gives Table \ref{tab:qodd_plane-l}.
\end{proof}

\begin{table}[h]
\caption{Values $\Pb_{\pk}$ (the  number of $\pk$-points on a line from  $\Os_\mu$, top entry) and $\Lb_{\pk}$ (the number of lines from  $\Os_\mu$ through a $\pk$-point, bottom entry) for the point-line incidence submatrices regarding the orbit $\Os_{\mu}$; odd $q\equiv\xi\pmod3$, $A_q(\mu)\triangleq q-2\nk_{q}(\mu)-\Nb_1(\mu)$,
$B_q(\mu)\triangleq 2 q-\nk_{q}(\mu)-2\Nb_1(\mu)$; $\boldsymbol{\ominus}$ means ``$\Upsilon_{q,\mu}$ \eqref{eq2:Upsilon} does not hold",
$\boldsymbol{\oplus}$ means ``$\Upsilon_{q,\mu}$  holds"}
\centering
\begin{tabular}{ccccccc}\hline
$\#\M_\pk\rightarrow$ &$q^2+q$&$\frac{1}{6}(q^3-q)$&$ \frac{1}{2}(q^3-q)$&$\frac{1}{3}(q^3-q)$\\
$\xi$&&&&\\
$\eta(\mu)$&$\Pb_\Tr$&$\Pb_{3_\Gamma}$&$\Pb_{1_\Gamma}$&$\Pb_{0_\Gamma}$\\
$\#\Os_{\mu}$&$\Lb_\Tr$&$\Lb_{3_\Gamma}$&$\Lb_{1_\Gamma}$&$\Lb_{0_\Gamma}$\\\hline

$\xi=-1$&&&&\\
 $\eta(\mu)=-1$&$\nk_{q}(\mu)$&$\frac{1}{3}A_q(\mu)$&$ \Nb_1(\mu)+1$&$\frac{1}{3}B_q(\mu)$\\
$\frac{1}{2}(q^3-q)$&$\frac{1}{2}(q-1)\nk_{q}(\mu)$&$A_q(\mu)$&$ \Nb_1(\mu)+1$&$\frac{1}{2}B_q(\mu)$\\

$\xi=-1$&&&&\\
$\eta(\mu)=1$&$\nk_{q}(\mu)$&$\frac{1}{3}A_q(\mu)$&$ \Nb_1(\mu)+1$&$\frac{1}{3}B_q(\mu)$\\
$\frac{1}{4}(q^3-q)$&$\frac{1}{4}(q-1)\nk_{q}(\mu)$&$\frac{1}{2}A_q(\mu)$&$ \frac{1}{2}(\Nb_1(\mu)+1)$&$\frac{1}{4}B_q(\mu)$\\

$\xi=1$&&&&\\
$\eta(\mu)=-1$&$\nk_{q}(\mu)$&$\frac{1}{3}(A_q(\mu)+1)$&$ \Nb_1(\mu)$&$\frac{1}{3}(B_q(\mu)+1)$\\
$\frac{1}{2}(q^3-q)$&$\frac{1}{2}(q-1)\nk_{q}(\mu)$&$A_q(\mu)+1$&$ \Nb_1(\mu)$&$\frac{1}{2}(B_q(\mu)+1)$\\

$\xi=1$&&&&\\
$\eta(\mu)=1,\boldsymbol{\ominus}$&$\nk_{q}(\mu)$&$\frac{1}{3}(A_q(\mu)+1)$&$ \Nb_1(\mu)$&$\frac{1}{3}(B_q(\mu)+1)$\\
$\frac{1}{4}(q^3-q)$&$\frac{1}{4}(q-1)\nk_{q}(\mu)$&$\frac{1}{2}(A_q(\mu)+1)$&$ \frac{1}{2}\Nb_1(\mu)$&$\frac{1}{4}(B_q(\mu)+1)$\\

$\xi=1$&&&&\\
$\eta(\mu)=1,\boldsymbol{\oplus}$&$\nk_{q}(\mu)$&$\frac{1}{3}(A_q(\mu)+1)$&$ \Nb_1(\mu)$&$\frac{1}{3}(B_q(\mu)+1)$\\
$\frac{1}{12}(q^3-q)$&$\frac{1}{12}(q-1)\nk_{q}(\mu)$&$\frac{1}{6}(A_q(\mu)+1)$&$ \frac{1}{6}\Nb_1(\mu)$&$\frac{1}{12}(B_q(\mu)+1)$\\\hline
\end{tabular}
\label{tab:qodd_point-l}
\end{table}

\begin{table}[h]
\caption{Values $\Pi_{\pi}$ (the  number of $\pi$-planes through a line from $\Os_\mu$, top entry) and
$\Lambda_{\pi}$ (the number of lines from  $\Os_\mu$ in a $\pi$-plane, bottom entry) for the plane-line incidence submatrices regarding the orbit $\Os_{\mu}$;
  odd $q\not\equiv0\pmod3$, $\widetilde{A}_q(\mu)\triangleq q-2\nk_{q}(\mu)-\widetilde{\Nb}_{1}(\mu)$,
$\widetilde{B}_q(\mu)\triangleq 2 q-\nk_{q}(\mu)-2\widetilde{\Nb}_{1}(\mu)$; $\boldsymbol{\ominus}$ means ``$\Upsilon_{q,\mu}$  \eqref{eq2:Upsilon} does not hold",
$\boldsymbol{\oplus}$ means ``$\Upsilon_{q,\mu}$  holds"}
\centering
\begin{tabular}{cccccc}\hline
$\#\N_\pi\rightarrow$&$q^2+q$&$\frac{1}{6}(q^3-q)$&$ \frac{1}{2}(q^3-q)$&$\frac{1}{3}(q^3-q)$\\
$\eta(\mu)$&$\Pi_{2_\C}$&$\Pi_{3_\C}$&$\Pi_{\overline{1_\C}}$&$\Pi_{0_\C}$\\
$\#\Os_{\mu}$&$\Lambda_{2_\C}$&$\Lambda_{3_\C}$&$\Lambda_{\overline{1_\C}}$&$\Lambda_{0_\C}$\\\hline

 $\eta(\mu)=-1$&$\nk_{q}(\mu)\vphantom{H^{H^{H^H}}}$&$\frac{1}{3}\widetilde{A}_q(\mu)$&$ \widetilde{\Nb}_{1}(\mu)+1$&$\frac{1}{3}\widetilde{B}_q(\mu)$\\
$\frac{1}{2}(q^3-q)$&$\frac{1}{2}(q-1)\nk_{q}(\mu)$&$\widetilde{A}_q(\mu)$&$ \widetilde{\Nb}_{1}(\mu)+1$&$\frac{1}{2}\widetilde{B}_q(\mu)$\\

$\eta(\mu)=1,\boldsymbol{\ominus}\vphantom{H^{H^{H^H}}}$&$\nk_{q}(\mu)$&$\frac{1}{3}(\widetilde{A}_q(\mu)+1)$&$ \widetilde{\Nb}_{1}(\mu)$&$\frac{1}{3}(\widetilde{B}_q(\mu)+1)$\\
$\frac{1}{4}(q^3-q)$&$\frac{1}{4}(q-1)\nk_{q}(\mu)$&$\frac{1}{2}(\widetilde{A}_q(\mu)+1)$&$ \frac{1}{2}\widetilde{\Nb}_{1}(\mu)$&$\frac{1}{4}(\widetilde{B}_q(\mu)+1)$\\

$\eta(\mu)=1,\boldsymbol{\oplus}\vphantom{H^{H^{H^H}}}$&$\nk_{q}(\mu)$&$\frac{1}{3}(\widetilde{A}_q(\mu)+1)$&$ \widetilde{\Nb}_{1}(\mu)$&$\frac{1}{3}(\widetilde{B}_q(\mu)+1)$\\
$\frac{1}{12}(q^3-q)$&$\frac{1}{12}(q-1)\nk_{q}(\mu)$&$\frac{1}{6}(\widetilde{A}_q(\mu)+1)$&$ \frac{1}{6}\widetilde{\Nb}_{1}(\mu)$&$\frac{1}{12}(\widetilde{B}_q(\mu)+1)$\\\hline
\end{tabular}
\label{tab:qodd_plane-l}
\end{table}

\subsection{Obtaining of  $\Nb_{1}(\mu)$ and $\widetilde{\Nb}_{1}(\mu)$ for equations $\Phi_{\mu,\gamma}(t)$ \eqref{eq5:cubEq} and  $\widetilde{\Phi}_{\mu,c}(t)$ \eqref{eq5:cub eq}}
For $\Phi_{\mu,\gamma}(t)$ \eqref{eq5:cubEq}, the discriminant $\Delta_{\mu,\gamma}$ and the  coefficients $A_i$ of the Hessian $H(T)=A_0T^2+A_1T+A_2$ , considered in \cite[ Section 1.8, equation (1.14), Lemma 1.18, Theorem 1.28, Proof of Lemma 1.32]{Hirs_PGFF},  are as follows:
\begin{align}\label{eq5:discrim}
 & \Delta_{\mu,\gamma}=27(3\gamma^2\mu^2-4\mu^3-4\gamma^4-\gamma^2+6\mu\gamma^2);\db\\
 &A_0=9(\mu-\gamma^2),~A_1=9\gamma(\mu-1),~A_1^2-4A_0A_2=-3\Delta_{\mu,\gamma}.\notag
   \end{align}
 For $\widetilde{\Phi}_{\mu,c}(t)$ \eqref{eq5:cub eq},  the corresponding objects $\widetilde{\Delta}_{\mu,c}$ and $\widetilde{A}_i$ have the form
 \begin{align}
 &\widetilde{\Delta}_{\mu,c}=c^2+4+4\mu c^4-27\mu^2 c^2+18\mu c^2\label{eq5:discrimW};\db\\
 &\widetilde{A}_0=-(3+c^2 ),~\widetilde{A}_1=c (1-9\mu);~\widetilde{A}_1^2-4\widetilde{A}_0\widetilde{A}_2=-3\widetilde{\Delta}_{\mu,c}.\notag
\end{align}

   \begin{lemma}
       Let $q$ be odd. Let $q\equiv\xi\pmod3$, $\xi\ne0$. Let the quadratic character $\eta$ be as in Section \emph{\ref{sec:incidL}}. Then the number of $\gamma\in\F_q$ and $c\in\F_q^*$ such that the equations $\Phi_{\mu,\gamma}(t)$ \eqref{eq5:cubEq} and $\widetilde{\Phi}_{\mu,c}(t)$ \eqref{eq5:cub eq} have exactly one solution  in $\F_q$ is:
  \begin{align}
   &\label{eq7:one solution}
   \Nb_1(\mu)= \#\{\gamma\,|\,\gamma\in\F_q,~\eta(-3\Delta_{\mu,\gamma})=-\xi\},\db\\
   &   \widetilde{\Nb}_1(\mu)= \#\{c\,|\,c\in\F_q^*,~\eta(-3\widetilde{\Delta}_{\mu,c})=-\xi\}.\label{eq7:one solutionW}
  \end{align}
\end{lemma}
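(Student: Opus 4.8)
The plan is to count the roots in $\F_q$ of the cubics $\Phi_{\mu,\gamma}(t)$ \eqref{eq5:cubEq} and $\widetilde{\Phi}_{\mu,c}(t)$ \eqref{eq5:cub eq} by exactly the method used in the proofs of Lemmas \ref{lem4:Nmq_odd} and \ref{lem4:Nmq_even}, namely by invoking the classification of \cite[Theorem 1.34, Table 1.3]{Hirs_PGFF}, which determines the number of distinct roots in $\F_q$ of a cubic with nonzero discriminant from $q\bmod 3$ and from the number of roots in $\F_q$ of its Hessian $H$. First I would record the dichotomy already implicit in the proof of Lemma \ref{lem4:Nmq_odd}, valid for a cubic with $\Delta\ne 0$: for $\xi=-1$ the cubic has exactly one root in $\F_q$ precisely when $H$ has two roots, while for $\xi=1$ it has exactly one root precisely when $H$ has none. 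Since the Hessian $H=A_0T^2+A_1T+A_2$ has two roots (resp. none) according as its discriminant $A_1^2-4A_0A_2$ is a nonzero square (resp. a non-square), and since $A_1^2-4A_0A_2=-3\Delta_{\mu,\gamma}$ by \eqref{eq5:discrim}, both branches collapse into the single criterion that $\Phi_{\mu,\gamma}$ has exactly one root iff $\eta(-3\Delta_{\mu,\gamma})=-\xi$. The same argument applied to $\widetilde{\Phi}_{\mu,c}$, using $\widetilde{A}_1^2-4\widetilde{A}_0\widetilde{A}_2=-3\widetilde{\Delta}_{\mu,c}$ from \eqref{eq5:discrimW}, yields $\eta(-3\widetilde{\Delta}_{\mu,c})=-\xi$.

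The reason the two branches unify can be stated conceptually, which I would use as a cross-check: a cubic over $\F_q$ with distinct roots has exactly one root in $\F_q$ exactly when it factors as a linear term times an irreducible quadratic, and this happens iff $\Delta$ is a non-square; the remaining possibilities (no root or three roots) force $\Delta$ to be a nonzero square, because an irreducible cubic over a finite field has cyclic, hence even, Galois group. Combining $\eta(\Delta)=-1$ with $\eta(-3)=\xi$ (since $-3$ is a square in $\F_q$ iff $q\equiv 1\pmod 3$) reproduces $\eta(-3\Delta)=-\xi$ and confirms the criterion independently of the Hessian bookkeeping.

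It then remains to verify that the degenerate locus $\Delta=0$ contributes nothing, so that counting $\gamma\in\F_q$ (resp. $c\in\F_q^*$) with $\eta(-3\Delta_{\mu,\gamma})=-\xi$ (resp. $\eta(-3\widetilde{\Delta}_{\mu,c})=-\xi$) counts precisely the parameters giving one root. Since $\eta(0)=0\ne-\xi$, the locus $\Delta=0$ is automatically excluded from the right-hand sides, so I must check it is not a ``one-root'' case in disguise. A repeated root with $\Delta=0$ is a double root, giving two distinct solutions and hence not counted, unless the cubic is a perfect cube; comparing $\Phi_{\mu,\gamma}$ (resp. $\widetilde{\Phi}_{\mu,c}$) with $(t-r)^3$ forces $r=\gamma,\ r^2=\mu,\ r^3=\gamma$ and hence $\mu=1$ (recall $\mu\ne 0$) (resp. $r^2=-1/3=-3\mu$, hence $\mu=1/9$), both excluded by $\mu\in\F_q^*\setminus\{1,1/9\}$. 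Thus $\Delta=0$ always yields two distinct roots, the counts match the stated sets, and \eqref{eq7:one solution}, \eqref{eq7:one solutionW} follow.

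The main obstacle is the case $\xi=1$, where a cubic with $\eta(\Delta)=1$ may have either zero or three roots depending on a cube condition; the observation that rescues the clean criterion is that both outcomes are ``not exactly one root'', so the cube condition never affects the $\Nb_1$, $\widetilde{\Nb}_1$ tally and may be discarded. A secondary technical point to settle is the boundary sub-case where the Hessian degenerates, e.g. $A_0=9(\mu-\gamma^2)=0$ at $\gamma^2=\mu$: there $\Delta_{\mu,\gamma}$ specializes to $-27\mu(\mu-1)^2$, which is nonzero for $\mu\in\F_q^*\setminus\{1\}$, so the discriminant criterion still applies and no separate treatment is required.
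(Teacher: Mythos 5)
Your proposal is correct and follows essentially the same route as the paper: for $\Delta_{\mu,\gamma}\ne0$ you invoke \cite[Theorem 1.34, Table 1.3]{Hirs_PGFF} together with the identity $A_1^2-4A_0A_2=-3\Delta_{\mu,\gamma}$ (and likewise for $\widetilde{\Phi}_{\mu,c}$), exactly as the paper does. The only divergence is in the degenerate locus $\Delta=0$, where the paper cites \cite[Corollary 1.30]{Hirs_PGFF} (one root iff all $A_i=0$) and notes that $A_0\ne0$ when $\gamma=0$ and $A_1\ne0$ when $\gamma\ne0$, while you rule out the perfect-cube case directly via $\mu=1$ resp.\ $\mu=1/9$ --- the two arguments are equivalent, and your Galois-theoretic cross-check via $\eta(\Delta)=-1$ and $\eta(-3)=\xi$ is a sound, if unnecessary, confirmation.
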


\begin{proof}
We take $\Delta_{\mu,\gamma}$ and $A_i$ from \eqref{eq5:discrim}. By \cite[Corollary 1.30]{Hirs_PGFF}, for $\Delta_{\mu,\gamma}=0$, $\Phi_{\mu,\gamma}(t)$ has one root in $\F_q$ if all $A_i=0$. But $A_0\ne0$ if $\gamma=0$ and $A_1\ne0$ if $\gamma\ne0$. By \cite[Theorem 1,34, Table 1.3]{Hirs_PGFF}, for $\Delta_{\mu,\gamma}\ne0$, $\Phi_{\mu,\gamma}(t)$ has exactly one root  in $\F_q$, if $H(T)$ has 2 or 0 roots in $\F_q$ according to $q\equiv-1\pmod3$ or $q\equiv1\pmod3$, respectively. By \eqref{eq5:discrim}, $H(T)$ has 2 roots in $\F_q$ (resp. 0 roots) if and only if $-3\Delta_{\mu,\gamma}$ is a non-zero square (resp. non-square). This proves \eqref{eq7:one solution}.

The assertion \eqref{eq7:one solutionW} for $\widetilde{\Nb}_1(\mu)$ can be proved similarly.
\end{proof}

\begin{example}\label{examp-7}
In Table \ref{tab:examp odd}, for odd $q\not\equiv0\pmod3$, we give the values of $\mathfrak{n}_q(\mu)$, $\Nb_1(\mu)$, and $\widetilde{\Nb}_1(\mu)$, obtained by the system MAGMA. By $\N$ we denote the number of $\mu$ providing the given values.
\begin{table}[htbp]
\caption{Values of $\mathfrak{n}_q(\mu)$, $\Nb_1(\mu)$, and $\widetilde{\Nb}_1(\mu)$; $\N$ is the number of cases; odd $q\equiv\xi\pmod3$}
\centering
\begin{tabular}{cccc|ccc|ccc}\hline
$q$&$\xi$&$\eta(\mu)$&$\mathfrak{n}_q(\mu)$&$2\Nb_1(\mu)$&$2\widetilde{\Nb}_1(\mu)$&$\N$
&$2\Nb_1(\mu)$&$2\widetilde{\Nb}_1(\mu)$&$\N\vphantom{H^{H^{H^H}}}$\\\hline
5&-1&-1&0&$q-1$&$q-1$&2\\\hline
11&-1&-1&0&$q-7$&$q+5$&1&$q+5$&$q-7$&1\\
&&-1&2&$q-3$&$q-3$&3&&&\\
&&1&0&$q-1$&$q+1$&3&&\\\hline
17&-1&-1&0&$q-1$&$q-1$&4\\
&&-1&2&$q-9$&$q+3$&2&$q+3$&$q-9$&2\\
&&1&0&$q-7$&$q+7$&3&$q+5$&$q-5$&2\\
&&1&4&$q+1$&$q-9$&1\\\hline
23&-1&-1&0&$q-7$&$q+5$&4&$q+5$&$q-7$&4\\
&&-1&2&$q-3$&$q-3$&3\\
&&1&0&$q-1$&$q+1$&6\\
&&1&4&$q-5$&$q-3$&3\\\hline
29&-1&-1&0&$q-1$&$q-1$&6\\
&&-1&2&$q-9$&$q+3$&4&$q+3$&$q-9$&4\\
&&1&0&$q-7$&$q+7$&4&$q+5$&$q-5$&6\\
&&1&4&$q-11$&$q+3$&2\\\hline
7&1&-1&0&$q+3$&$q+1$&2\\
&&-1&2&$q-5$&0&1\\
&&1&0&$q-3$&$q-3$&1\\\hline
13&1&-1&0&$q-3$&$q-5$&4\\
&&-1&2&$q+1$&$q-1$&2\\
&&1&0&$q+3$&$q+3$&4\\\hline
19&1&-1&0&$q+3$&$q+1$&6\\
&&-1&2&$q-5$&$q-7$&2&$q+7$&$q+5$&1\\
&&1&0&$q-3$&$q-3$&4&$q+9$&$q+9$&1\\
&&1&4&$q-7$&$q-7$&2\\\hline
25&1&-1&0&$q-3$&$q-5$&4&$q+9$&$q+7$&4\\
&&-1&2&$q+1$&$q-1$&4\\
&&1&0&$q-9$&$q-9$&4&$q+3$&$q+3$&4\\
&&1&4&$q-1$&$q-1$&2\\\hline

31&1&-1&0&$q+3$&$q+1$&4&$q-9$&$q-11$&2\\
&&-1&2&$q-5$&$q-7$&7&$q+7$&$q+5$&2\\
&&1&0&$q-3$&$q-3$&7&$q+9$&$q+9$&4\\
&&1&4&$q+5$&$q+5$&2\\\hline
\end{tabular}
\label{tab:examp odd}
\end{table}
Table \ref{tab:examp odd} allows us to do the following interesting observations:
\begin{itemize}
  \item $\mathfrak{n}_q(\mu)\in\{0,2\}$ if $\eta(\mu)=-1$; $\mathfrak{n}_q(\mu)\in\{0,4\}$ if $\eta(\mu)=1$.
  \item  $\Nb_1(\mu)-\widetilde{\Nb}_1(\mu)=1$ if  $q\equiv1\pmod3$ and $\eta(\mu)=-1$.
  \item  $\Nb_1(\mu)=\widetilde{\Nb}_1(\mu)$ if  $q\equiv1\pmod3$ and $\eta(\mu)=1$.
  \item  Let $q\equiv-1\pmod{3}$, $\eta(\mu)=-1$. Then there are cases $\Nb_1(\mu)=\widetilde{\Nb}_1(\mu)$ and also pairs of the form $(\Nb_1(\mu),\widetilde{\Nb}_1(\mu))=(a,b)$ and $(\Nb_1(\mu),\widetilde{\Nb}_1(\mu))=(b,a)$.
  \item For $q'\equiv q''\pmod{12}$, relations between $\Nb_1(\mu)$ and $\widetilde{\Nb}_1(\mu)$ are similar.
\end{itemize}

\end{example}

\section{Point-line and plane-line incidence submatrices for orbits $\Os_\lambda$,  $q\equiv0\pmod3$}\label{sec:q=0mod3}
In this section $q\equiv0\pmod3$. This implies the natural simplification of some elements of Sections \ref{subsubsece:lmu} and \ref{sec:lmuW&intersec}. For $q\equiv0\pmod3$ we have $\mu\in\F_q^*\setminus\{1\}$ and
\begin{align}\label{eq8:sect5forq=0}
    & \varpi(\ell_\mu,\TT_t)=  t^4+ t^2+\mu,t\in\F_q;~\varpi(\ell_\mu,\TT_t)=0\T{ if }t=\pm\sqrt{1\pm\sqrt{1-\mu}};\db\\
    &\mathfrak{n}_q(\mu)=\#\left\{t|t=\pm\sqrt{1\pm\sqrt{1-\mu}},~t\in\F_q\right\}\in\{0,2,4\}.\notag
 \end{align}

 \subsection{Incidence submatrices}

\begin{lemma}\label{lem8:useful}
Let $q\equiv0\pmod3$. For an orbit $\Os_\mu$, \eqref{eq3:Pi1Pi2Pi3} and \eqref{eq3:Pi1Pi2Pi3b} hold. Also, we have
\begin{align}\label{eq8:q+qG}
&\textbf{\emph{(i)}}~~ \Pb_{(q+1)_\Gamma}=0, ~\Pb_\TO=\mathfrak{n}_{q}(\mu);\db\\
&\textbf{\emph{(ii)}}~\Pi_{\overline{1_\C}}=\Pb_\IC;\db\\
&\textbf{\emph{(iii)}}~\Pi_{\overline{1_\C}}=\widetilde{\Nb}_1(\mu) \T{ if }\mu\T{ is a square in }\F_q,~
\Pi_{\overline{1_\C}}=\widetilde{\Nb}_1(\mu)+1 \T{ otherwise}.
\end{align}
\end{lemma}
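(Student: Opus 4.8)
The relations \eqref{eq3:Pi1Pi2Pi3} and \eqref{eq3:Pi1Pi2Pi3b} were established in Section \ref{sec:useful} for all $q$ (they only use $\Pi_\Gamma=0$, that there are $q+1$ planes through a line, and counting $\C$-points weighted by the planes through $\ell_\mu$), so they hold verbatim when $q\equiv0\pmod3$; I would simply cite them. Part \textbf{(iii)} is nothing but Lemma \ref{lem5:1Cplanes}: its proof parametrizes the planes through $\ell_\mu$ as $\boldsymbol{\pi}(c_0,c_1,-c_0,-\mu c_1)$ and reads off which points $P(t)\in\C$ they contain through the cubic $\widetilde{\Phi}_{\mu,c}(t)$, an argument that never invokes $q\not\equiv0\pmod3$. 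Hence it applies unchanged and yields $\Pi_{\overline{1_\C}}=\widetilde{\Nb}_1(\mu)$ when $\mu$ is a square and $\widetilde{\Nb}_1(\mu)+1$ otherwise. Thus the two displayed relations and \textbf{(iii)} cost nothing new.

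For part \textbf{(i)} I would first record the shape of $\Gamma$ in characteristic $3$. Since $3=0$, \eqref{eq2_osc_plane} collapses to $\pi_\T{osc}(t)=\boldsymbol{\pi}(1,0,0,-t^3)$ for $t\in\F_q$ and $\pi_\T{osc}(\infty)=\boldsymbol{\pi}(0,0,0,1)$; as $t\mapsto t^3$ is a bijection of $\F_q$, these are precisely the $q+1$ planes through the axis $g:\,x_0=x_3=0$, and the $q+1$ points of $g$ are exactly the $(q+1)_\Gamma$-points. Now $R_{\mu,\gamma}=\Pf(\gamma,\mu,\gamma,1)$ has $x_3=1\ne0$ and $R_{\mu,\infty}=\Pf(1,0,1,0)$ has $x_0=1\ne0$, so no point of $\ell_\mu$ lies on $g$ (equivalently, $g$ meets only lines lying in a $\Gamma$-plane by Lemma \ref{lem3:tang&2cplane}(ii), whereas $\ell_\mu$ is an $\EnG$-line); this gives $\Pb_{(q+1)_\Gamma}=0$. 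For $\Pb_\TO=\mathfrak{n}_q(\mu)$ I would combine $\Pb_\TO=\Pi_{2_\C}$ from \eqref{eq3:2C=t} with the count $\Pi_{2_\C}=\mathfrak{n}_q(\mu)$: by Lemma \ref{lem3:tang&2cplane}(ii) a tangent can meet the $\EnG$-line $\ell_\mu$ only in a $\TO$-point, these $\TO$-points together with their $2_\C$-planes exhaust the $2_\C$-planes through $\ell_\mu$ (Lemma \ref{lem5:Tpoints} and its proof), and the tangents meeting $\ell_\mu$ are exactly the roots $t\in\F_q$ of $\varpi(\ell_\mu,\TT_t)=t^4+t^2+\mu$ in \eqref{eq8:sect5forq=0}, numbering $\mathfrak{n}_q(\mu)$.

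The real work is part \textbf{(ii)}, $\Pi_{\overline{1_\C}}=\Pb_\IC$, where the null polarity $\A$ is unavailable since $q\equiv0\pmod3$. The plan is to exhibit a bijection between the $\overline{1_\C}$-planes through $\ell_\mu$ and the $\IC$-points on $\ell_\mu$. Given an $\overline{1_\C}$-plane $\pi\supset\ell_\mu$, over $\overline{\F}_q$ the intersection $\pi\cap\C$ consists of three points; exactly one is real, and since any real point among the other two would make $\pi$ a $2_\C$- or $3_\C$-plane (or, if all three coincided, the osculating $\Gamma$-plane, excluded because $\pi\not\supset g$), those two are a Frobenius-conjugate imaginary pair $P(\theta),P(\bar\theta)$ spanning an imaginary chord $\kappa\subset\pi$; as $\kappa\ne\ell_\mu$ are coplanar lines they meet in one $\IC$-point $P\in\ell_\mu$, and I send $\pi\mapsto P$. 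Conversely, a standard incidence count — the $q^2-q$ points of $\C(\F_{q^2})\setminus\C(\F_q)$ form $(q^2-q)/2$ conjugate pairs, hence $(q^2-q)/2$ imaginary chords carrying $\tfrac12(q^2-q)(q+1)=\tfrac12(q^3-q)=\#\M_\IC$ incidences by Theorem \ref{th2_Hirs}(iv) — forces every $\IC$-point to lie on a unique imaginary chord; sending an $\IC$-point $P\in\ell_\mu$ to $\pi=\mathrm{span}(\ell_\mu,\kappa)$, with $\kappa$ that chord, lands among the $\overline{1_\C}$-planes through $\ell_\mu$ (again $\pi\cap\C$ is the imaginary pair plus one residual real point, and $\pi\not\supset g$). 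These two maps are mutually inverse, whence $\Pi_{\overline{1_\C}}=\Pb_\IC$. The main obstacle is exactly this part: the care needed to exclude tangential and degenerate intersection patterns in an $\overline{1_\C}$-plane and to pin down the uniqueness of the imaginary chord through each $\IC$-point, after which injectivity and surjectivity of the correspondence are immediate.
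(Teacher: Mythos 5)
Your proposal is correct, and for parts (i) and (iii) it follows essentially the paper's route: (iii) is exactly the paper's citation of Lemmas \ref{lem3:orb the same} and \ref{lem5:1Cplanes} (whose proof indeed never uses $q\not\equiv0\pmod3$), and your argument for $\Pb_{(q+1)_\Gamma}=0$ is a coordinate-level instance of the paper's one-line observation that an $\EnG$-line, lying in no osculating plane, cannot meet the axis of the pencil $\Gamma$ (if it did, the plane it spans with the axis would be an osculating plane containing it); your derivation of $\Pb_\TO=\mathfrak{n}_q(\mu)$ via \eqref{eq3:2C=t} and Lemma \ref{lem5:Tpoints} matches the paper's references. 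The genuine divergence is part (ii). The paper disposes of $\Pi_{\overline{1_\C}}=\Pb_\IC$ by citing results from its companion papers (a plane--line incidence table and a point--line relation proved there), so no new argument appears in this text. You instead build a self-contained bijection: a $\overline{1_\C}$-plane through $\ell_\mu$ cuts $\C$ over the quadratic extension in one rational point plus a conjugate imaginary pair, hence contains a unique imaginary chord, which meets $\ell_\mu$ in an $\IC$-point; conversely each $\IC$-point lies on a unique imaginary chord (your incidence count works, though it is cleaner to note that two chords meeting off $\C$ would force four coplanar points of the curve over $\F_{q^2}$, which is impossible for a twisted cubic), and spanning that chord with $\ell_\mu$ returns a $\overline{1_\C}$-plane. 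Your case analysis of the degree-$3$ Galois-stable intersection divisor correctly rules out the tangential and totally ramified patterns, so the two maps are mutually inverse. What your approach buys is independence from the external references and an argument that works uniformly in characteristic $3$, where the null polarity $\A$ (the paper's usual tool for such dualities) is unavailable; what it costs is the extra bookkeeping you yourself flag. Both are valid; yours is the more transparent proof of (ii) within the present paper.
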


\begin{proof}
\begin{description}
  \item[(i)]
  An $\EnG$-line does not lie in any osculating plane, therefore it does not intersect the axis of the pencil of the osculating planes. See also Proposition \ref{prop3:PiG=LambG=PbC=LbC=0} and Lemmas \ref{lem3:tang&2cplane},
  \ref{lem5:Tpoints}.

  \item[(ii)]  The assertion follows from \cite[Table 1, Theorem 3.3(vi)]{DMP_PlLineIncJG} and \cite[Remark 3, (4.13)]{DMP_PointLineInc}.

  \item[(iii)] The assertion follows from Lemmas \ref{lem3:orb the same}, \ref{lem5:1Cplanes}.
 \qedhere
\end{description}
\end{proof}

\begin{theorem}
Let $q\equiv0\pmod3$.
For the submatrices of the point-line and plane-line incidence matrices with respect to the orbit $\Os_\mu$, the values $\Pb_{\pk}$,
$\Lb_{\pk}$ and  $\Pi_{\pi}$,  $\Lambda_{\pi}$ (see Notation~\emph{\ref{notation_2}}), are as in Tables \emph{\ref{tab:q=0mod3_point-l}} and \emph{\ref{tab:q=0mod3_plane-l}}, respectively; see also  Proposition \emph{\ref{prop3:PiG=LambG=PbC=LbC=0}} and Lemma~\emph{\ref{lem8:useful}}. The sizes of orbits $\#\M_\pk$ and $\#\N_{\pi(\pk)}$ are noted in the top of columns.
\end{theorem}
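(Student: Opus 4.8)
The plan is to follow the template of the theorems for even $q$ and for odd $q\not\equiv0\pmod3$, but with two changes forced by $\xi=0$: the point-type relations \eqref{eq3:Pi1Pi2Pi3c} and \eqref{eq3:Pi1Pi2Pi3d} are no longer available, and the point types are now $\C,(q+1)_\Gamma,\TO,\RC,\IC$. Their places are taken by the $\xi=0$ tools collected in Lemma~\ref{lem8:useful} together with Proposition~\ref{prop3:PiG=LambG=PbC=LbC=0}.

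I would begin with the plane-line submatrix, whose plane types $\Gamma,2_\C,3_\C,\overline{1_\C},0_\C$ are the same for all $q$. By Proposition~\ref{prop3:PiG=LambG=PbC=LbC=0} we have $\Pi_\Gamma=0$, while Lemma~\ref{lem8:useful}(i) combined with \eqref{eq3:2C=t} gives $\Pi_{2_\C}=\Pb_\TO=\mathfrak{n}_q(\mu)$; the value of $\Pi_{\overline{1_\C}}$ expressed through $\widetilde{\Nb}_1(\mu)$ and $\eta(\mu)$ is supplied by Lemma~\ref{lem8:useful}(iii). From these two inputs, \eqref{eq3:Pi1Pi2Pi3} yields $\Pi_{3_\C}$ and \eqref{eq3:Pi1Pi2Pi3b} yields $\Pi_{0_\C}$; finally \eqref{eq3:obtainLamb1} converts each $\Pi_\pi$ into its companion $\Lambda_\pi$ using the plane-orbit sizes $\#\N_\pi$ of Theorem~\ref{th2_Hirs}(ii) and the orbit size $\#\Os_\mu$ of Theorem~\ref{th2:orbitellmu}(iii). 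This reproduces Table~\ref{tab:q=0mod3_plane-l}.

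For the point-line submatrix I would use Proposition~\ref{prop3:PiG=LambG=PbC=LbC=0} and Lemma~\ref{lem8:useful}(i) to get $\Pb_\C=\Pb_{(q+1)_\Gamma}=0$ and $\Pb_\TO=\mathfrak{n}_q(\mu)$, and Lemma~\ref{lem8:useful}(ii) to get $\Pb_\IC=\Pi_{\overline{1_\C}}$, already computed in the plane-line step. The only count not yet pinned down is $\Pb_\RC$, which I would obtain from the fact that every line of $\PG(3,q)$ carries exactly $q+1$ points: since the five point types partition $\ell_\mu$, this forces $\Pb_\RC=q+1-\Pb_\TO-\Pb_\IC$. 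Passing to the numbers $\Lb_\pk$ via \eqref{eq3:obtainLamb1}, with the point-orbit sizes $\#\M_\pk$ from Theorem~\ref{th2_Hirs}(iv), then fills in Table~\ref{tab:q=0mod3_point-l}.

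The main obstacle is conceptual rather than computational. In the $\xi\ne0$ sections the bridge between point and plane incidences was the null polarity $\A$ through Lemma~\ref{lem3:UPiPk}, but $\A$ is unavailable for $q\equiv0\pmod3$. Here that bridge is replaced by the single identity $\Pi_{\overline{1_\C}}=\Pb_\IC$ of Lemma~\ref{lem8:useful}(ii). The delicate point is therefore to verify that $\Pi_{\overline{1_\C}}$ (equivalently $\widetilde{\Nb}_1(\mu)$) is the only genuinely new datum and that every other table entry is already forced by the ``for all $q$'' counting identities \eqref{eq3:Pi1Pi2Pi3} and \eqref{eq3:Pi1Pi2Pi3b}, by the constraint that a line meets $q+1$ points and lies in $q+1$ planes, and by the orbit-size conversions \eqref{eq3:obtainLamb1}. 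Once this dependency is confirmed, both tables follow by routine substitution.
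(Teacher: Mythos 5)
Your proposal is correct and follows essentially the same route as the paper: both derive $\Pi_{2_\C}=\Pb_\TO=\mathfrak{n}_q(\mu)$, $\Pi_{\overline{1_\C}}$, and $\Pb_\IC$ from Proposition~\ref{prop3:PiG=LambG=PbC=LbC=0} and Lemma~\ref{lem8:useful}, obtain $\Pb_\RC=q+1-\Pb_\TO-\Pb_\IC$, compute $\Pi_{3_\C}$ and $\Pi_{0_\C}$ via \eqref{eq3:Pi1Pi2Pi3} and \eqref{eq3:Pi1Pi2Pi3b}, and convert to $\Lb_\pk$ and $\Lambda_\pi$ via \eqref{eq3:obtainLamb1}. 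Your closing observation that Lemma~\ref{lem8:useful}(ii) replaces the null-polarity bridge used for $q\not\equiv0\pmod3$ is accurate commentary but does not change the argument.
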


\begin{proof}
  For $\Os_\mu$, we obtain the values of $\Pb_\TO$, $\Pi_{2_\C}$, $\Pi_{\overline{1_\C}}$, and  $\Pb_\IC$, using Proposition \emph{\ref{prop3:PiG=LambG=PbC=LbC=0}} and Lemma \ref{lem8:useful}. Then we calculate $\Pb_\RC=q+1-\Pb_\TO-\Pb_\IC$, $\Pi_{3_\C}$ by \eqref{eq3:Pi1Pi2Pi3}, and $\Pi_{0_\C}$ by \eqref{eq3:Pi1Pi2Pi3b}. Finally, we obtain all $\Lb_\pk$ and $\Lambda_\pi$ by \eqref{eq3:obtainLamb1}.
\end{proof}

\begin{table}[h]
\caption{Values $\Pb_{\pk}$ (the  number of $\pk$-points on a line from  $\Os_\mu$, top entry) and $\Lb_{\pk}$ (the number of lines from  $\Os_\mu$ through a $\pk$-point, bottom entry) for the point-line incidence submatrices regarding the orbit $\Os_{\mu}$; $q\equiv0\pmod3$}
\centering
\begin{tabular}{cccc}\hline
$\#\M_\pk\rightarrow$ &$q^2-1$&$\frac{1}{2}(q^3-q)$&$ \frac{1}{2}(q^3-q)$\\
$\eta(\mu)$&$\Pb_\TO$&$\Pb_{\RC}$&$\Pb_{\IC}$\\
$\#\Os_{\mu}$&$\Lb_\TO$&$\Lb_{\RC}$&$\Lb_{\IC}$\\\hline

 $\eta(\mu)=-1$&$\nk_{q}(\mu)$&$q-\nk_{q}(\mu)-\widetilde{\Nb}_1(\mu)$&$ \widetilde{\Nb}_1(\mu)+1\vphantom{H^{H^{H^H}}}$\\
$\frac{1}{2}(q^3-q)$&$\frac{1}{2}q\nk_{q}(\mu)$&$q-\nk_{q}(\mu)-\widetilde{\Nb}_1(\mu)$&$\widetilde{\Nb}_1(\mu)+1$\\

$\eta(\mu)=1$&$\nk_{q}(\mu)$&$q+1-\nk_{q}(\mu)-\widetilde{\Nb}_1(\mu)$&$ \widetilde{\Nb}_1(\mu)\vphantom{H^{H^{H^H}}}$\\
$\frac{1}{4}(q^3-q)$&$\frac{1}{4}q\nk_{q}(\mu)$&$\frac{1}{2}(q+1-\nk_{q}(\mu)-\widetilde{\Nb}_1(\mu))$&$ \frac{1}{2} \widetilde{\Nb}_1(\mu)$\\\hline
\end{tabular}
\label{tab:q=0mod3_point-l}
\end{table}

\begin{table}[h]
\caption{Values $\Pi_{\pi}$ (the  number of $\pi$-planes through a line from $\Os_\mu$, top entry) and
$\Lambda_{\pi}$ (the number of lines from  $\Os_\mu$ in a $\pi$-plane, bottom entry) for the plane-line incidence submatrices regarding the orbit $\Os_{\mu}$;
 $q\equiv0\pmod3$, $\widetilde{A}_q(\mu)\triangleq q-2\nk_{q}(\mu)-\widetilde{\Nb}_1(\mu)$,
$\widetilde{B}_q(\mu)\triangleq 2 q-\nk_{q}(\mu)-2\widetilde{\Nb}_1(\mu)$}
\centering
\begin{tabular}{cccccc}\hline
$\#\N_\pi\rightarrow$&$q^2+q$&$\frac{1}{6}(q^3-q)$&$ \frac{1}{2}(q^3-q)$&$\frac{1}{3}(q^3-q)$\\
$\eta(\mu)$&$\Pi_{2_\C}$&$\Pi_{3_\C}$&$\Pi_{\overline{1_\C}}$&$\Pi_{0_\C}$\\
$\#\Os_{\mu}$&$\Lambda_{2_\C}$&$\Lambda_{3_\C}$&$\Lambda_{\overline{1_\C}}$&$\Lambda_{0_\C}$\\\hline

 $\eta(\mu)=-1$&$\nk_{q}(\mu)\vphantom{H^{H^{H^H}}}$&$\frac{1}{3}\widetilde{A}_q(\mu)$&$\widetilde{\Nb}_1(\mu)+1$&$\frac{1}{3}\widetilde{B}_q(\mu)$\\
$\frac{1}{2}(q^3-q)$&$\frac{1}{2}(q-1)\nk_{q}(\mu)$&$\widetilde{A}_q(\mu)$&$ \widetilde{\Nb}_1(\mu)+1$&$\frac{1}{2}\widetilde{B}_q(\mu)$\\

$\eta(\mu)=1\vphantom{H^{H^{H^H}}}$&$\nk_{q}(\mu)$&$\frac{1}{3}(\widetilde{A}_q(\mu)+1)$&$\widetilde{\Nb}_1(\mu)$&$\frac{1}{3}(\widetilde{B}_q(\mu)+1)$\\
$\frac{1}{4}(q^3-q)$&$\frac{1}{4}(q-1)\nk_{q}(\mu)$&$\frac{1}{2}(\widetilde{A}_q(\mu)+1)$&$ \frac{1}{2}\widetilde{\Nb}_1(\mu)$&$\frac{1}{4}(\widetilde{B}_q(\mu)+1)$\\\hline
\end{tabular}
\label{tab:q=0mod3_plane-l}
\end{table}

\subsection{Obtaining of  $\widetilde{\Nb}_{1}(\mu)$ for equation $\widetilde{\Phi}_{\mu,c}(t)$ \eqref{eq5:cub eq}}
\begin{lemma}
  Let the quadratic character $\eta$ be as in Section \emph{\ref{sec:incidL}}. For $\widetilde{\Nb}_1(\mu)$ we have
  \begin{align}\label{eq8:N1mu}
  \widetilde{\Nb}_1(\mu)=\#\left\{c\,|\,\eta\left(\frac{c^4}{\mu c^4+c^2+1}\right)=-1,~c\in\F_q^*,~\mu c^4+c^2+1\ne0\right\}.
  \end{align}
\end{lemma}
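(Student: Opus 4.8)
The plan is to read off $\widetilde{\Nb}_1(\mu)$ from the factorization type over $\F_q$ of the cubic $\widetilde{\Phi}_{\mu,c}(t)=t^3+ct^2-t-\mu c$ in \eqref{eq5:cub eq}, exploiting the fact that in characteristic $3$ its derivative is \emph{linear}. First I would compute $\widetilde{\Phi}{}'_{\mu,c}(t)=-ct-1$, so the only possible repeated root is $t_0=-1/c$, and the substitution $t=t_0$ gives $\widetilde{\Phi}_{\mu,c}(-1/c)=-(\mu c^4+c^2+1)/c^3$. Hence $\widetilde{\Phi}_{\mu,c}$ has a repeated root exactly when $\mu c^4+c^2+1=0$, and is separable otherwise. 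This already accounts for the restriction $\mu c^4+c^2+1\neq0$ appearing in \eqref{eq8:N1mu}.

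Next I would dispose of the repeated-root locus. When $\mu c^4+c^2+1=0$, the repeated root is $t_0=-1/c\in\F_q$; since $\widetilde{\Phi}{}''_{\mu,c}(t)=2c=-c\neq0$ it is a double (not triple) root, and the third root equals $-c-2t_0=t_0-c$, which lies in $\F_q$ and differs from $t_0$ because $c\neq0$. Thus on this locus the cubic has exactly \emph{two} distinct roots in $\F_q$, so these $c$ never contribute to $\widetilde{\Nb}_1(\mu)$ and are correctly omitted from \eqref{eq8:N1mu}.

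For $c$ with $\mu c^4+c^2+1\neq0$ the cubic is separable, and I would invoke the classical criterion (valid for odd $q$, in particular $q=3^h$) that a cubic with distinct roots has exactly one root in $\F_q$ if and only if its discriminant is a non-square; this is the parity-of-Frobenius statement underlying \cite[Theorem~1.34]{Hirs_PGFF}. It then remains to compute the characteristic-$3$ discriminant. Specializing the standard discriminant $18abd-4a^3d+a^2b^2-4b^3-27d^2$ of $t^3+at^2+bt+d$ to the coefficients $(a,b,d)=(c,-1,-\mu c)$ and reducing modulo $3$ kills the first and last terms and leaves $\Delta_{\mu,c}=\mu c^4+c^2+1$, in agreement with the resultant identity $\Delta_{\mu,c}=-\mathrm{Res}(\widetilde{\Phi}_{\mu,c},\widetilde{\Phi}{}'_{\mu,c})$ forced by the substitution above.

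Finally, since $c^4=(c^2)^2$ is a non-zero square for $c\in\F_q^*$, multiplicativity of $\eta$ yields $\eta(\Delta_{\mu,c})=\eta(c^4/(\mu c^4+c^2+1))$ (the $c^4$ normalization also matches $c^4/(\mu c^4+c^2+1)=1/\varpi(\ell_\mu,\TT_{1/c})$ via \eqref{eq8:sect5forq=0}), so ``exactly one root'' becomes $\eta(c^4/(\mu c^4+c^2+1))=-1$; counting the admissible $c$ gives \eqref{eq8:N1mu}. The main obstacle is the characteristic-$3$ bookkeeping: the depression $t\mapsto t-a/3$ and the Hessian machinery used for $q\not\equiv0\pmod3$ are unavailable, so separability must be argued through the linear derivative, one must track by hand which discriminant terms survive modulo $3$, and one must verify that the repeated-root locus contributes two roots rather than one.
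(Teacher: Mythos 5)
Your argument is correct. The paper reaches the same conclusion by a slightly different technical route: it performs the substitution $t=x+\alpha$ with $\alpha=-c^{-1}$ (precisely the unique zero of the linear derivative you compute), which in characteristic $3$ depresses the cubic to the special form $f(x)=x^3+cx^2+d$ with $d=\widetilde{\Phi}_{\mu,c}(\alpha)=-(\mu c^4+c^2+1)/c^3$, and then quotes Hirschfeld's ready-made criterion for that form: $f$ has exactly one root in $\F_q$ iff $-c/d$ is a non-square. Since $-c/d=c^4/(\mu c^4+c^2+1)$, this is exactly the quadratic-character condition you obtain from the general ``a separable cubic has exactly one $\F_q$-root iff its discriminant is a non-square'' criterion combined with your direct mod-$3$ reduction of the universal discriminant to $\mu c^4+c^2+1$; indeed $\eta(-c/d)=\eta(\mu c^4+c^2+1)$ here, so the two criteria coincide. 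What your route buys is independence from the special-form analysis of \cite{Hirs_PGFF} and, more substantively, a complete treatment of the degenerate locus $\mu c^4+c^2+1=0$: the paper only remarks that $\alpha$ is then a double root, whereas you verify (via $\widetilde{\Phi}{}''_{\mu,c}=-c\ne0$ and the root-sum computation giving a distinct third root $t_0-c\in\F_q$) that the cubic then has exactly two distinct $\F_q$-roots, so these $c$ genuinely do not contribute to $\widetilde{\Nb}_1(\mu)$ --- a point the paper leaves implicit.
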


\begin{proof}
We use the results  of \cite[Corollary 1.23, Section 1.10, p. 25]{Hirs_PGFF}. We put $\alpha=-c^{-1}$, make the variable substitution $t=x+\alpha$  in  $\widetilde{\Phi}_{\mu,c}(t)$ and obtain $f(x)=x^3+cx^2+d=0$ where $d=\widetilde{\Phi}_{\mu,c}(\alpha)=-(\mu c^4+c^{2}+1)/c^3$. If $d=0 $ then  $\alpha$ is a double root of $\widetilde{\Phi}_{\mu,c}(t)$.
If $d\ne0$ then, by \cite[p. 25]{Hirs_PGFF},    $f(x)$ has exactly one root in $\F_q$  if $-c/d$ is a non-square in~$\F_q$.
\end{proof}

\begin{example}
In Table \ref{tab:examp q0}, for $q\equiv0\pmod3$, we give the values of $\mathfrak{n}_q(\mu)$ and $\widetilde{\Nb}_1(\mu)$ obtained by the system MAGMA. By $\N$ we denote the number of $\mu$ providing the given values. Similarly to Example \ref{examp7}, we  observe here that $\mathfrak{n}_q(\mu)\in\{0,2\}$ if $\eta(\mu)=-1$; $\mathfrak{n}_q(\mu)\in\{0,4\}$ if $\eta(\mu)=1$.
\begin{table}[htbp]
\caption{Values of $\mathfrak{n}_q(\mu)$, $\widetilde{\Nb}_1(\mu)$; $\N$ is the number of cases; $q\equiv0\pmod3$}
\centering
\begin{tabular}{ccc|cccccccc}\hline
$q$&$\eta(\mu)$&$\mathfrak{n}_q(\mu)$&$2\widetilde{\Nb}_1(\mu)$&$\N$&$2\widetilde{\Nb}_1(\mu)$&$\N$
&$2\widetilde{\Nb}_1(\mu)$&$\N\vphantom{H^{H^{H^H}}}$\\\hline
9&-1&0&$q+3$&2\\
&-1&2&$q-5$&2\\
&1&0&$q-1$&3&&\\\hline
27&-1&0&$q-3$&6&$q+9$&1\\
&-1&2&$q+1$&3&$q-11$&3\\
&1&0&$q+5$&6&$q-7$&3\\
&1&4&$q+1$&3\\\hline
81&-1&0&$q-9$&8&$q+3$&8&$q+15$&4\\
&-1&2&$q-17$&4&$q-5$&8&$q+7$&8\\
&1&0&$q-13$&4&$q-1$&8&$q+11$&8\\
&1&4&$q-17$&5&$q+7$&4\\\hline
\end{tabular}
\label{tab:examp q0}
\end{table}
\end{example}

\end{document}